\theoremstyle{plain}
\newtheorem{thrm}{Theorem}[section]
\newtheorem{lmm}[thrm]{Lemma}
\newtheorem{prpstn}[thrm]{Proposition}
\newtheorem*{cnjctr}{Conjecture}
\newtheorem{hypthss}{Hypothesis}
\numberwithin{sblmm}{thrm} 
\numberwithin{equation}{section}
\renewcommand{\phi}{\varphi}
\begin{document}
\title{Dense clusters of primes in subsets}
\author{James Maynard}
\address{Centre de recherches math\'ematiques,
Universit\'e de Montr\'eal,
Pavillon Andr\'e-Aisenstadt,
2920 Chemin de la tour, Room 5357,
Montr\'eal (Qu\'ebec) H3T 1J4}
\email{maynardj@dms.umontreal.ca}
\begin{abstract}
We prove a generalization of the author's work to show that any subset of the primes which is `well-distributed' in arithmetic progressions contains many primes which are close together. Moreover, our bounds hold with some uniformity in the parameters. As applications, we show there are infinitely many intervals of length $(\log{x})^{\epsilon}$ containing $\gg_\epsilon \log\log{x}$ primes, and show lower bounds of the correct order of magnitude for the number of strings of $m$ congruent primes with $p_{n+m}-p_n\le \epsilon\log{x}$.
\end{abstract}
\maketitle
\section{Introduction}
Let $\mathcal{L}=\{L_1,\dots,L_k\}$ be a set of distinct linear functions $L_i(n)=a_in+b_i$ ($1\le i\le k$) with coefficients in the positive integers. We say such a set is \textit{admissible} if $\prod_{i=1}^kL_i(n)$ has no fixed prime divisor (that is, for every prime $p$ there is an integer $n_p$ such that $\prod_{i=1}^kL_i(n_p)$ is coprime to $p$). Dickson made the following conjecture.
\begin{cnjctr}[Prime $k$-tuples conjecture]
Let $\mathcal{L}=\{L_1,\dots,L_k\}$ be admissible. Then there are infinitely many integers $n$ such that all $L_i(n)$ ($1\le i\le k$) are prime.
\end{cnjctr}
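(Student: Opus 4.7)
The prime $k$-tuples conjecture as stated is a famous open problem, and I should be upfront that no unconditional proof is known for any $k\ge 2$; nevertheless, I can describe the natural attack and identify precisely where it collapses. The plan is to prove the quantitative Hardy--Littlewood asymptotic
\[
\sum_{n\le N}\prod_{i=1}^{k}\Lambda(L_i(n)) = \mathfrak{S}(\mathcal{L})\,N + o(N),\qquad
\mathfrak{S}(\mathcal{L})=\prod_{p}\Bigl(1-\frac{\nu_{\mathcal{L}}(p)}{p}\Bigr)\Bigl(1-\frac{1}{p}\Bigr)^{-k},
\]
where $\nu_{\mathcal{L}}(p)$ counts residues $a\bmod p$ with $p\mid\prod_i L_i(a)$. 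Admissibility is precisely the condition $\nu_{\mathcal{L}}(p)<p$ for all $p$, which forces $\mathfrak{S}(\mathcal{L})>0$; any such asymptotic, or even a matching lower bound, would produce infinitely many $n$ with all $L_i(n)$ prime.

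First I would set up the local densities and verify that the singular series converges under admissibility, reducing the conjecture to the analytic asymptotic above. Next, I would decompose each $\Lambda(L_i(n))$ by a combinatorial identity (Vaughan or Heath--Brown) into type~I sums $\sum_{d\le D}\alpha_d\sum_{n:\,d\mid L_i(n)}\beta_n$ and bilinear type~II sums, expand the product over $i$, and reduce the whole problem to controlling a finite family of $k$-fold correlations. The type~I contribution should reassemble into the singular series via a Bombieri--Vinogradov-type distribution statement for $L_i$ in arithmetic progressions; for $k=1$ this is just the prime number theorem in progressions and gives the classical result.

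The decisive obstacle is Selberg's parity barrier. Once $k\ge 2$, the $k$-fold type~II sums force one to estimate quantities such as $\sum_{de\sim x}\mu(d)\mu(e)\cdots$ twisted by arithmetic information on several linear forms simultaneously, and no known method — sieve, circle, or dispersion — can extract a non-trivial main term, because each of these techniques is intrinsically insensitive to whether an integer has an even or odd number of prime factors. Conditional strategies (GRH, Elliott--Halberstam, or a suitable version of the Möbius randomness principle strong enough to break parity) would suffice, but none is available. What the methods developed in this paper, building on Maynard and Tao, achieve is a lower bound counting $n$ for which at least $m=m(k)\to\infty$ of the $L_i(n)$ are prime — a genuine weakening that sidesteps parity by never insisting on all $k$ simultaneously. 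I therefore present this as the honest state of affairs: the plan above is the natural approach, and its failure at the $k$-fold type~II estimate is the step no current technique overcomes, so a complete proof of the conjecture as stated is beyond the reach of this paper or any known method.
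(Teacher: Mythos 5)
You are correct that this statement is Dickson's prime $k$-tuples conjecture, which the paper states as an open conjecture and does not prove; it explicitly describes it as ``well beyond the current techniques'' and only establishes weak analogues in which \emph{several} (rather than all) of the $L_i(n)$ are prime. Your account --- the Hardy--Littlewood asymptotic as the natural target, the failure of all known methods at the parity barrier, and the Maynard--Tao framework sidestepping this by demanding only $m$ of the $k$ forms be prime --- is accurate and consistent with the paper's treatment, so there is nothing to compare against and no gap to report beyond the one you yourself identify as inherent to the problem.
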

Although such a conjecture appears well beyond the current techniques, recent progress ( \cite{Zhang}, \cite{Maynard}, and unpublished work of Tao) has enabled us to prove weak forms of this conjecture, where instead we show that there are infinitely many integers $n$ such that \textit{several} (rather than \textit{all}) of the $L_i(n)$ are primes.

As noted in \cite{Maynard}, the method of Maynard and Tao can also prove such weak versions of Dickson's conjecture in various more general settings. This has been demonstrated in the recent work \cite{Thorner}, \cite{LemkeOliver}, \cite{Freiberg}, \cite{Pollack}, \cite{Hongze}. In this paper we consider generalized versions of Dickson's conjecture, and prove corresponding weak versions of them.

Based on heuristics from the Hardy-Littlewood circle method, it has been conjectured that the number of $n\le x$ such that all the $L_i(n)$ are prime should have an asymptotic formula $(\mathfrak{S}(\mathcal{L})+o(1))x/(\log{x})^k$, where $\mathfrak{S}(\mathcal{L})$ is a constant depending only on $\mathcal{L}$ (with $\mathfrak{S}(\mathcal{L})>0$ iff $\mathcal{L}$ is admissible). Moreover, these heuristics would suggest that the formulae should hold even if we allow the coefficients $a_i,b_i$ and the number $k$ of functions in $\mathcal{L}$ to vary slightly with $x$.

One can also speculate that Dickson's conjecture might hold for more general sets, where we ask for infinitely many integers $n\in\mathcal{A}$ such that all of $L_i(n)$ are primes in $\mathcal{P}$, for some `nice' sets of integers $\mathcal{A}$ and of primes $\mathcal{P}$, and provided $\mathcal{L}$ satisfies some simple properties in terms of $\mathcal{A}$ and $\mathcal{P}$. For example, Schinzel's Hypothesis H would imply this if either $\mathcal{A}$ or $\mathcal{P}$ are restricted to the values given by an irreducible polynomial, and a uniform version of Dickson's conjecture would give this if $\mathcal{A}$ or $\mathcal{P}$ were restricted to the union of short intervals.

The aim of this paper is to show that the flexibility of the method introduced in \cite{Maynard} allows us to prove weak analogues of these generalizations of Dickson's conjecture. In particular, if $\mathcal{A}$ and $\mathcal{P}\cap L(\mathcal{A})$ are well-distributed in arithmetic progressions, then we can obtain a lower bound close to the expected truth for the number of $n\in\mathcal{A},n\le x$ such that several of the $L_i(n)$ are primes in $\mathcal{P}$, and we can show this estimate holds with some uniformity in the size of $a_i,b_i$ and $k$.
\section{Well-distributed sets}
Given a set of integers $\mathcal{A}$, a set of primes $\mathcal{P}$, and a linear function $L(n)=l_1n+l_2$, we define
\begin{align}
\mathcal{A}(x)&=\{n\in \mathcal{A}: x\le n< 2x\},&\quad \mathcal{A}(x;q,a)&=\{n\in \mathcal{A}(x), n\equiv a\pmod{q}\},\nonumber\\
L(\mathcal{A})&=\{L(n):n\in\mathcal{A}\},\quad &\phi_L(q)&=\phi(|l_1| q)/\phi(|l_1|),\\
\mathcal{P}_{L,\mathcal{A}}(x)&=L(\mathcal{A}(x))\cap\mathcal{P},\quad &\mathcal{P}_{L,\mathcal{A}}(x;q,a)&=L(\mathcal{A}(x;q,a))\cap\mathcal{P}.\nonumber
\end{align}
This paper will focus on sets which satisfy the following hypothesis, which is given in terms of $(\mathcal{A},\mathcal{L},\mathcal{P},B,x,\theta)$ for $\mathcal{L}$ an admissible set of linear functions, $B\in\mathbb{N}$, $x$ a large real number, and  $0<\theta<1$.
\begin{hypthss}\label{hypthss:Weak}$(\mathcal{A},\mathcal{L},\mathcal{P},B,x,\theta)$. Let $k=\#\mathcal{L}$.
\begin{enumerate}
\item $\mathcal{A}$ is well-distributed in arithmetic progressions: We have
\[\sum_{q\le x^{\theta}}\max_{a}\Bigl|\#\mathcal{A}(x;q,a)-\frac{\#\mathcal{A}(x)}{q}\Bigr|\ll \frac{\#\mathcal{A}(x)}{(\log{x})^{100k^2}}.\]
\item Primes in $L(\mathcal{A})\cap \mathcal{P}$ are well-distributed in most arithmetic progressions: For any $L\in\mathcal{L}$ we have
\[\sum_{\substack{q\le x^{\theta}\\ (q,B)=1}}\max_{(L(a),q)=1}\Bigl|\#\mathcal{P}_{L,\mathcal{A}}(x;q,a)-\frac{\#\mathcal{P}_{L,\mathcal{A}}(x)}{\phi_L(q)}\Bigr|\ll \frac{\#\mathcal{P}_{L,\mathcal{A}}(x)}{(\log{x})^{100k^2}}.\]
\item $\mathcal{A}$ is not too concentrated in any arithmetic progression: For any $q<x^\theta$ we have
\[\#\mathcal{A}(x;q,a)\ll \frac{\#\mathcal{A}(x)}{q}.\]
\end{enumerate}
\end{hypthss}
We expect to be able to show this Hypothesis holds (for all large $x$, some fixed $\theta>0$ and some $B<x^{O(1)}$ with few prime factors) for sets $\mathcal{A},\mathcal{P}$ where we can establish `Siegel-Walfisz' type asymptotics for arithmetic progressions to small moduli, and a large sieve estimate to handle larger moduli.

We note that the recent work of Benatar \cite{Benatar} showed the existence of small gaps between primes for sets which satisfy similar properties to those considered here.
\section{Main Results}
\begin{thrm}\label{thrm:MainTheorem}
Let $\alpha>0$ and $0<\theta<1$. Let $\mathcal{A}$ be a set of integers, $\mathcal{P}$ a set of primes, $\mathcal{L}=\{L_1,\dots,L_k\}$ an admissible set of $k$ linear functions, and $B,x$ integers. Let the coefficients $L_i(n)=a_in+b_i\in\mathcal{L}$ satisfy $1\le a_i,b_i\le x^{\alpha}$ for all $1\le i\le k$, and let $k\le (\log{x})^\alpha$ and $1\le B\le x^\alpha$.

There is a constant $C$ depending only on $\alpha$ and $\theta$ such that the following holds. If $k\ge C$ and $(\mathcal{A},\mathcal{L},\mathcal{P},B,x,\theta)$ satisfy Hypothesis \ref{hypthss:Weak}, and if $\delta>(\log{k})^{-1}$ is such that
%
%
\[\frac{1}{k}\frac{\phi(B)}{B}\sum_{L\in\mathcal{L}}\frac{\phi(a_i)}{a_i}\#\mathcal{P}_{L,\mathcal{A}}(x)\ge \delta\frac{\#\mathcal{A}(x)}{\log{x}},\]
then
\[\#\{n\in\mathcal{A}(x):  \#(\{L_1(n),\dots,L_k(n)\}\cap\mathcal{P})\ge C^{-1}\delta\log{k}\}\gg \frac{\#\mathcal{A}(x)}{(\log{x})^{k}\exp(C k)}.\]
Moreover, if $\mathcal{P}=\mathbb{P}$, $k\le (\log{x})^{1/5}$ and all $L\in\mathcal{L}$ have the form $an+b_i$ with $|b_i|\le (\log{x})k^{-2}$ and $a\ll 1$, then the primes counted above can be restricted to be consecutive, at the cost of replacing $\exp(Ck)$ with $\exp(Ck^5)$ in the bound.
\end{thrm}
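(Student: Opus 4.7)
My approach is to apply the Maynard--Tao multidimensional sieve of \cite{Maynard} in the abstract setting of Hypothesis \ref{hypthss:Weak}, taking care that every estimate is uniform in $k$. I would introduce nonnegative sieve weights
\[
w_n = \Bigl(\sum_{\substack{d_i\mid L_i(n)\ \forall i \\ (d_i,B)=1}}\lambda_{d_1,\dots,d_k}\Bigr)^2,
\]
supported on squarefree tuples with $\prod_i d_i\le R:=x^{\theta/2-\epsilon}$, where $\lambda_{d_1,\dots,d_k}$ is built from a smooth symmetric test function $F$ on the simplex $\{\mathbf{t}\in\mathbb{R}^k_{\ge 0}:\sum t_i\le 1\}$ in the usual way, essentially matching $\bigl(\prod_i\mu(d_i)d_i\bigr)F\bigl(\tfrac{\log d_1}{\log R},\dots,\tfrac{\log d_k}{\log R}\bigr)$. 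The choice of $R$ guarantees that every auxiliary modulus arising after expanding the square lies below $x^\theta$, so Hypothesis \ref{hypthss:Weak} is applicable.

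Next I would compute
\[
S_1=\sum_{n\in\mathcal{A}(x)}w_n,\qquad S_2=\sum_{m=1}^k\sum_{n\in\mathcal{A}(x)}\mathbf{1}_{L_m(n)\in\mathcal{P}}\,w_n.
\]
Expanding the square and swapping summation reduces each inner sum to one of the counts $\#\mathcal{A}(x;q,a)$ or $\#\mathcal{P}_{L_m,\mathcal{A}}(x;q,a)$ governed by Hypothesis \ref{hypthss:Weak}(1)--(3). The savings $(\log x)^{-100k^2}$ built into the hypothesis dominate the divisor-type losses $\tau_{O(1)}(q)^{O(k)}$ accrued in the expansion, provided $k\le(\log x)^\alpha$. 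The main terms then take the familiar Maynard--Tao shape: $S_1$ is of order $\#\mathcal{A}(x)\cdot(\log R)^k\cdot(\phi(B)/B)\prod_i(\phi(a_i)/a_i)^{-1}\cdot I_k(F)$, and $S_2$ of analogous order with $(\log R)^{k+1}/\log x$ and $\sum_m J_k^{(m)}(F)$, where the density factor $\#\mathcal{P}_{L_m,\mathcal{A}}(x)\log x/\#\mathcal{A}(x)$ enters each summand. Crucially, the local factor $(\phi(B)/B)\prod_i\phi(a_i)/a_i$ appears identically on both sides, so the density hypothesis translates cleanly into $S_2/S_1\gg \delta k\cdot \sum_m J_k^{(m)}(F)/I_k(F)\cdot(\log R/\log x)$.

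The conclusion follows by pigeonhole: since $\#\{m:L_m(n)\in\mathcal{P}\}$ is a nonnegative integer whose $w_n$-weighted average is $S_2/S_1$, a positive $w_n$-weighted proportion of $n\in\mathcal{A}(x)$ must attain at least this value. Plugging in the test function $F$ constructed in \cite{Maynard} satisfying $\sum_m J_k^{(m)}(F)/I_k(F)\gg\log k$ produces the threshold $C^{-1}\delta\log k$, while the lower bound $S_1\gg \#\mathcal{A}(x)/((\log x)^k\exp(Ck))$ (from the $I_k(F)$ normalization together with $k$-dependent singular-series corrections) delivers the claimed count of such $n$. For the consecutive-primes refinement I would further multiply $w_n$ by an indicator-style factor, built from a truncated Selberg upper bound sieve on the interval $[an,\,an+a(\log x)/k^2]$, penalizing primes in this window outside the $L_i(n)$; the contribution of spurious primes is then subtracted pair-by-pair in $(m,h)$ with $h\ne b_m$, and the $\binom{k}{2}$ cross-interactions cost $\exp(Ck^5)$ and force $k\le(\log x)^{1/5}$. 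The main obstacle throughout is preserving $k$-uniformity: the singular-series factors, the explicit construction of $F$, and the error savings built into Hypothesis \ref{hypthss:Weak} must be balanced so nothing grows faster than $\exp(Ck)$, respectively $\exp(Ck^5)$ in the refined statement.
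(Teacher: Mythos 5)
Your high-level strategy — Maynard-type sieve weights, estimating $S_1$ and $S_2$ via Hypothesis \ref{hypthss:Weak}, pigeonhole — matches the paper's, and your observation that the local factors $\phi(B)/B$, $\phi(a_i)/a_i$ appear compatibly on both sides of the density hypothesis is correct in spirit. But there is a concrete gap in passing from the weighted inequality to a count of integers $n$. The pigeonhole $S_2 - m S_1 > 0$ with $m\asymp\delta\log k$ gives existence of $n$ with $\ge m+1$ prime values, but to bound $\#\{n: \cdots\}$ from below you must divide by a pointwise bound on $w_n$ over the \emph{contributing} $n$. The bound valid for arbitrary $n$ is only $w_n\ll R^{2+o(1)}$, which would make your count lose a power of $x$ — far weaker than the theorem's polylog-plus-$\exp(Ck)$ loss. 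The paper circumvents this by augmenting the bracket with $-k\sum_i\sum_{p\mid L_i(n),\,p<x^\rho,\,p\nmid B}1$ (see equation \eqref{eq:SDef}), so that any contributing $n$ has every $L_i(n)$ free of prime factors $p\nmid B$ below $x^\rho$, hence has $O(1/\rho)$ such prime factors, hence $w_n\ll(\log x)^{2k}\exp(O(k/\rho))$ by Lemma \ref{lmm:LambdaSize}(ii). Your proposal omits this mechanism entirely, and without it the pigeonhole as you state it does not yield the claimed $\gg\#\mathcal{A}(x)/((\log x)^k\exp(Ck))$.

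Two further points are imprecise. The $\exp(Ck^5)$ loss is not caused by $\binom{k}{2}$ cross-interactions: it comes from $\exp(O(k/\rho))$ with $\rho\asymp k^{-3}(\log k)^{-1}$, and the cleaner $\exp(Ck)$ in the first part of the theorem is obtained by a separate reduction (proving the weaker $\exp(Ck^5)$ bound in the range $k\le(\log x)^{1/5}$ and then passing to a subset of $\mathcal{L}$), a step your sketch does not address. For the consecutive-primes refinement, the paper works additively — it subtracts $k\sum_{b\le\eta\log x,\, L=an+b\notin\mathcal{L}}\mathbf{1}_{\mathcal{S}(\theta/10;1)}(L(n))$, with each summand estimated by part (3) of Proposition \ref{prpstn:MainProp} — rather than multiplying $w_n$ by a Selberg-sieve factor; a multiplicative modification would require reworking all the main-term and cross-term estimates of Proposition \ref{prpstn:MainProp} rather than invoking them, which is a nontrivial extra burden you have not accounted for.
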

All implied constants in Theorem \ref{thrm:MainTheorem} are effectively computable if the implied constants in Hypothesis \ref{hypthss:Weak} for $(\mathcal{A},\mathcal{L},\mathcal{P},B,x,\theta)$ are. 

We note that Theorem \ref{thrm:MainTheorem} can show that several of the $L_i(n)$ are primes for sets $\mathcal{A},\mathcal{P}$ where it is \textit{not} the case that there are infinitely many $n\in\mathcal{A}$ such that \textit{all} of the $L_i(n)$ are primes in $\mathcal{P}$. For example, if $\mathcal{P}=\{p_{2n}:n\in\mathbb{N}\}$ is the set of primes of even index and $\mathcal{A}=\mathbb{N}$, then we would expect $\mathcal{P}$ to be equidistributed in the sense of Hypothesis \ref{hypthss:Weak}. However, there are clearly no integers $n$ such that $n,n+2\in\mathcal{P}$, and so the analogue of the twin prime conjecture does not hold in this case. Similarly if $\mathcal{P}$ is restricted to the union of arithmetic progressions in short intervals\footnote{For example, one could take $\mathcal{P}=\cup_{x=2^j}\cup_{i\le x^{1/4}/2}\{x+(2i-1)x^{3/4}<p\le x+2ix^{3/4}]:n\equiv i\pmod{5}\}$. This set is equidistributed in the sense of Hypothesis \ref{hypthss:Weak}, but also has no gaps of size 2.}. Therefore without extra assumptions on our sets $\mathcal{A},\mathcal{P}$ we cannot hope for a much stronger statement than several of the $L_i(n)$ are primes in $\mathcal{P}$.

We also note that Theorem \ref{thrm:MainTheorem} can apply to very sparse sets $\mathcal{A}$, and no density assumptions are required beyond the estimates of Hypothesis \ref{hypthss:Weak}. Of course, for such sets the major obstacle is in establishing Hypothesis \ref{hypthss:Weak}.

We give some applications of this result.
\begin{thrm}\label{thrm:UniformSmallGaps}
For any $x,y\ge 1$ there are $\gg x\exp(-\sqrt{\log{x}})$ integers $x_0\in[x,2x]$ such that
\[\pi(x_0+y)-\pi(x_0)\gg \log{y}.\]
\end{thrm}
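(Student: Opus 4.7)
The plan is to split the argument by the size of $y$ relative to $x$: a moderate range of $y$ is handled by Theorem \ref{thrm:MainTheorem}, a large range by a direct first-moment argument, and the case of bounded $y$ is trivial. Let $C_0$ denote the constant from Theorem \ref{thrm:MainTheorem} attached to fixed values of $\alpha$ and $\theta$ to be chosen below.

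For the moderate regime, say $100\,C_0\le y\le \log x\,(\log\log x)^2$, I would apply Theorem \ref{thrm:MainTheorem} with $\mathcal{A}=\mathbb{N}$, $\mathcal{P}=\mathbb{P}$, $B=1$, and fixed parameters $\theta<1/2$ and $\alpha=1$. Parts (1) and (3) of Hypothesis \ref{hypthss:Weak} are immediate from $\#\mathcal{A}(x;q,a)=\lfloor x/q\rfloor+O(1)$, while part (2) reduces to Bombieri-Vinogradov for primes in $[x+b,2x+b)$ with $b\le y$ (with sufficient savings in $\log x$ to accommodate the $(\log x)^{100k^2}$ factor for the allowed range of $k$). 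The linear functions are $L_i(n)=n+b_i$ with $\{b_1,\dots,b_k\}$ an admissible $k$-tuple in $[0,y]$, constructed for example as shifted primes exceeding $k$, whose diameter is $O(k\log k)$ and hence fits in $[0,y]$ provided $k\lesssim y/\log y$. The density condition is satisfied with $\delta\ge 1/2$ by the prime number theorem. Setting
\[
k=\min\!\bigl(\lfloor y/(10\log y)\rfloor,\;\lfloor \sqrt{\log x}/(10\log\log x)\rfloor\bigr),
\]
one has $k(\log\log x+C_0)\le \sqrt{\log x}$, so the bound $\gg\#\mathcal{A}(x)/((\log x)^k\exp(C_0 k))$ from Theorem \ref{thrm:MainTheorem} exceeds $x\exp(-\sqrt{\log x})$. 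Whichever branch of the minimum binds, $\log k\gg\log y$ with an absolute implied constant: if the first term binds, $\log k=\log y+O(\log\log y)$; if the second binds, $y\le \log x(\log\log x)^2$ yields $\log y\le 2\log\log x\asymp\log k$. Theorem \ref{thrm:MainTheorem} then provides $\gg x\exp(-\sqrt{\log x})$ shifts $n\in[x,2x]$ with $\gg\log y$ primes among $\{n+b_1,\dots,n+b_k\}\subset[n,n+y]$, as required.

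For the large regime $y>\log x(\log\log x)^2$, the prime number theorem gives $\sum_{x_0\in[x,2x]}(\pi(x_0+y)-\pi(x_0))=(1+o(1))xy/\log x$ (for $y\le x$; if $y\ge x$, PNT on $[x_0,x_0+y]$ itself already yields $\gg y/\log y\gg\log y$ primes for every $x_0$). Coupling this lower bound on the mean with the Brun-Titchmarsh upper bound $\pi(x_0+y)-\pi(x_0)\ll y/\log y$ and splitting the sum according to whether $\pi(x_0+y)-\pi(x_0)\ge\log y$ or not, the ``good'' set has size $\gg x\log y/\log x\gg x\exp(-\sqrt{\log x})$. For $y<100\,C_0$ the statement is trivial: $\log y$ is $O(1)$ and PNT directly yields $\gg x/\log x$ shifts $x_0$ with at least one prime in $[x_0,x_0+y]$.

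The main obstacle is the choice of $k$ in the moderate regime, where two constraints compete: an admissible $k$-tuple must fit in $[0,y]$ (forcing $k\lesssim y/\log y$), and the loss factor $(\log x)^k\exp(C_0k)$ must stay below $\exp(\sqrt{\log x})$ (forcing $k\lesssim\sqrt{\log x}/\log\log x$). The choice above is tight for both constraints while preserving $\log k\gg\log y$ with a uniform implied constant, and the moderate regime connects smoothly with the first-moment argument at $y\asymp\log x(\log\log x)^2$.
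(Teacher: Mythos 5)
Your overall architecture (trivial range, a moderate range handled by Theorem \ref{thrm:MainTheorem}, a large range handled by a first-moment/Brun--Titchmarsh argument) is sound, and the large-$y$ and small-$y$ cases are fine. But there is a genuine gap in the moderate regime: the verification of part (2) of Hypothesis \ref{hypthss:Weak}. You take $B=1$ and assert that Bombieri--Vinogradov provides ``sufficient savings in $\log x$ to accommodate the $(\log x)^{100k^2}$ factor for the allowed range of $k$.'' This fails because your $k$ grows with $x$. The standard Bombieri--Vinogradov theorem saves an arbitrary \emph{fixed} power of $\log x$, with an implied constant depending (ineffectively, via Siegel--Walfisz) on that power; one cannot let the exponent be $100k^2\to\infty$. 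Quantitatively, with your cap $k\le \sqrt{\log x}/(10\log\log x)$ the required saving is $(\log x)^{100k^2}=\exp\bigl(\log x/(\log\log x)\bigr)$, which exceeds even the strongest unconditional form of Bombieri--Vinogradov, namely the $\exp(-c\sqrt{\log x})$ saving available only after excising a possible exceptional (Landau--Page) modulus. With $B=1$ you have not excised it, and even if you had, $\exp(-c\sqrt{\log x})$ forces $100k^2\log\log x\ll\sqrt{\log x}$, i.e.\ $k\ll(\log x)^{1/4}(\log\log x)^{-1/2}$, which your choice of $k$ violates.

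The repair is exactly what the paper does: invoke the Landau--Page theorem, let $B$ be the largest prime factor of the (at most one) exceptional modulus $q_0\le\exp(2c_1\sqrt{\log x})$ (and $B=1$ otherwise), so that the restricted sum over $(q,B)=1$ in Hypothesis \ref{hypthss:Weak}(2) enjoys the effective saving $\exp(-c\sqrt{\log x})$; note also $B/\phi(B)=1+O(1/\log\log x)$ since $q_0\gg\log x/(\log\log x)^2$. Then cap $k$ at $(\log x)^{1/5}$ rather than $\sqrt{\log x}/(10\log\log x)$: this keeps $(\log x)^{100k^2}=\exp(O((\log x)^{2/5}\log\log x))=o(\exp(c\sqrt{\log x}))$, still gives $(\log x)^k\exp(Ck)\le\exp(o(\sqrt{\log x}))$, and preserves $\log k\gg\log y$ on both branches of your minimum. (The paper avoids your upper moderate range altogether by reducing to $y\le(\log x)^{1/5}$ via monotonicity of $\pi(x_0+y)-\pi(x_0)$ in $y$, but your first-moment argument covers that range equally well.) With these two changes your proof goes through and is essentially the paper's.
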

Theorem \ref{thrm:UniformSmallGaps} is non-trivial in the region $y=o(\log{x})$ (and $y$ sufficiently large), when typically there are no primes in the interval $[x,x+y]$. For such values of $y$, it shows that there are many intervals of length $y$ containing considerably more than the typical number of primes. By comparison, a uniform version of the prime $k$-tuples conjecture would suggest that for small $y$ there are intervals $[x,x+y]$ containing $\gg y/\log{y}$ primes. For large fixed $y$, we recover the main result of \cite{Maynard}, that $\liminf_{n}(p_{n+m}-p_n)\ll_m 1$ for all $m$.
\begin{thrm}\label{thrm:AveragedShiu}
Fix $\epsilon>0$ and let $x>x_0(\epsilon,q)$. There is a constant $c_\epsilon>0$ (depending only on $\epsilon$) such that uniformly for $m\le c_\epsilon \log\log{x}$, $q\le (\log{x})^{1-\epsilon}$ and $(a,q)=1$ we have
\[\#\{p_n\le x:p_n\equiv \dots \equiv p_{n+m}\equiv a \pmod{q}, p_{n+m}-p_n\le\epsilon\log{x}\}\gg_\epsilon \frac{\pi(x)}{(2q)^{\exp(Cm)}}.\]
Here $C>0$ is a fixed constant.
\end{thrm}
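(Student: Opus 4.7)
The plan is to apply Theorem \ref{thrm:MainTheorem} in its consecutive-primes form to an admissible tuple whose shifts are all congruent to $a$ modulo $q$, with $\mathcal{A}$ taken to be the multiples of $q$ in $[x,2x]$ so that primes produced by the sieve automatically lie in the progression $a\pmod q$. Write $C_0$ for the constant furnished by Theorem \ref{thrm:MainTheorem} and set $k=\lceil\exp(4C_0(m+1))\rceil$; the assumption $m\le c_\epsilon\log\log x$ with $c_\epsilon$ small enough depending only on $\epsilon$ ensures both $k\le(\log x)^{1/5}$ and $k\le(\log x)^{\epsilon/4}$. Let $H=\{h_1,\ldots,h_k\}$ consist of the first $k$ primes exceeding $k$ that are congruent to $a\pmod q$ and bounded by $(\log x)/k^2$; such primes exist in ample supply by Siegel--Walfisz in the range $q\le(\log x)^{1-\epsilon}$, and since each $h_i$ is a prime greater than $k$, the tuple $\{n+h_i\}_{i=1}^{k}$ is automatically admissible. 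Set $L_i(n)=n+h_i$, $\mathcal{P}=\mathbb{P}$, $B=q$, and $\theta=1/3$.

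Next I verify Hypothesis \ref{hypthss:Weak}. For $\mathcal{A}=q\mathbb{N}\cap[x,2x]$ and moduli $Q$ coprime to $B=q$, $\mathcal{A}$ is equidistributed with pointwise error $O(1)$, so parts (1) and (3) hold with enormous margin. Part (2) for $(Q,q)=1$ reduces by CRT to counting primes in a single residue class modulo $qQ$ (automatically $\equiv a\pmod q$), and the Bombieri--Vinogradov theorem controls the variance up to moduli $qx^{\theta}<x^{1/2-\eta}$. Siegel--Walfisz then gives $\#\mathcal{P}_{L_i,\mathcal{A}}(x)\sim x/(\phi(q)\log x)$, whence the density hypothesis of Theorem \ref{thrm:MainTheorem} reduces to $x/(q\log x)\ge \delta\cdot\#\mathcal{A}(x)/\log x$, satisfied with $\delta=1/2>(\log k)^{-1}$.

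Theorem \ref{thrm:MainTheorem} now produces $\gg(x/q)/((\log x)^{k}\exp(C_0 k^5))$ values of $n\in\mathcal{A}$ for which at least $C_0^{-1}\delta\log k\ge m+1$ of the $L_i(n)$ are consecutive primes, yielding strings $p_n,\ldots,p_{n+m}\equiv a\pmod q$ with $p_{n+m}-p_n\le\epsilon\log x$. Each distinct starting prime $p_n$ arises from at most $\epsilon\log x$ choices of $n$, so the count of qualifying $p_n$ is at least $(x/q)/((\log x)^{k+1}\epsilon^{-1}\exp(C_0 k^5))$. Taking logarithms and using $\log k=\Theta(m)$ together with $\log(2q)\le\log\log x$, this dominates the target $\pi(x)/(2q)^{\exp(Cm)}\asymp x/((\log x)(2q)^{\exp(Cm)})$ provided $C$ is a sufficiently large multiple of $C_0$.

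The main obstacle is the delicate balance between two competing requirements: $k$ must be at least $\exp(\Theta(m))$ for the Maynard--Tao machinery to guarantee $m+1$ consecutive primes among the $k$ values, yet the consecutive-primes refinement in Theorem \ref{thrm:MainTheorem} introduces a loss of $\exp(C_0 k^5)$ that must be kept below the target factor $(2q)^{\exp(Cm)}=\exp(\exp(Cm)\log(2q))$. This squeeze is precisely what forces $m\le c_\epsilon\log\log x$: essentially the entire saving $(2q)^{-\exp(Cm)}$ afforded by the statement is consumed by the $\exp(C_0 k^5)$ cost of restricting to consecutive primes, and there is no slack to allow $m$ to be any larger than a small multiple of $\log\log x$.
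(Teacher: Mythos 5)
There is a genuine gap, and it is quantitative rather than cosmetic: a single fixed admissible tuple cannot produce the stated lower bound. Your count of qualifying starting primes is of order $x/\bigl(q(\log x)^{k+1}\exp(C_0k^5)\bigr)$, while the target is $\pi(x)/(2q)^{\exp(Cm)}\asymp x/\bigl((\log x)(2q)^{\exp(Cm)}\bigr)$. Comparing the two requires $(\log x)^{k}\exp(C_0k^5)\ll (2q)^{\exp(Cm)}$, i.e.\ $k\log\log x\ll \exp(Cm)\log(2q)$. Since $\log(2q)$ can be as small as $\log 2$, this forces $\exp(Cm)\gg k\log\log x$, which is impossible for fixed $m$ (and hence fixed $k$) as $x\to\infty$; indeed for fixed $m,q$ the theorem asserts a \emph{positive proportion} of primes, whereas any single application of Theorem \ref{thrm:MainTheorem} loses a factor $(\log x)^{k}$. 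No choice of the constant $C$ repairs this — your step ``this dominates the target \dots provided $C$ is a sufficiently large multiple of $C_0$'' uses $\log(2q)\le\log\log x$ in the wrong direction. The missing idea is the paper's averaging over all admissible tuples $\mathcal{L}(\mathbf{b})$ with shifts in $[0,\eta\log x]$: there are $\gg(\log x/q)^k\exp(-O(k^2))$ of them, and a disjointness argument (an $n$ contributing positively determines its tuple as the set of integers in $[qn,qn+\eta\log x]$ free of small prime factors) shows the contributions add rather than overlap. This recovers the factor $(\log x)^{k}$ and leaves exactly the loss $q^{-k}\exp(-O(k^5))$, which \emph{is} absorbable into $(2q)^{-\exp(Cm)}$.

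Two further problems with the setup. First, $\mathcal{A}=q\mathbb{N}\cap[x,2x]$ violates Hypothesis \ref{hypthss:Weak}: parts (1) and (3) range over \emph{all} moduli $Q\le x^\theta$ (only part (2) carries the restriction $(Q,B)=1$), and for $Q=q$ one has $\#\mathcal{A}(x;q,0)=\#\mathcal{A}(x)$, not $O(\#\mathcal{A}(x)/q)$; summing over multiples of $q$ makes the error in (1) of size $\#\mathcal{A}(x)\log x$. The correct parametrization, as in the paper, is $\mathcal{A}=\mathbb{N}$ with $L_i(n)=qn+a+qb_i$. Second, with that parametrization the lead coefficient is $q\le(\log x)^{1-\epsilon}$, which is not $\ll 1$, so the consecutive-primes clause of Theorem \ref{thrm:MainTheorem} cannot be invoked as a black box; the paper instead re-runs the argument with the sieve condition $\mathcal{S}(\rho;B)$ in place of $\mathcal{S}(\theta/10;1)$, pays the resulting factor $O(\rho^{-1})=O(k^3\log k)$, and compensates by shrinking $\eta$ to a small multiple of $k^{-4}(\log k)^{-2}$. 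Your heuristic closing paragraph correctly identifies the tension between $\exp(Ck^5)$ and $(2q)^{\exp(Cm)}$, but misses that the binding constraint is the $(\log x)^k$ normalization loss, which only the averaging device removes.
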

Theorem \ref{thrm:AveragedShiu} extends a result of Shiu \cite{Shiu} which showed the same result but with a lower bound $\gg x^{1-\varepsilon(x)}$ for $\varepsilon(x)\approx C_q m(\log\log{x})^{-1/\phi(q)}$ in the shorter range $m\ll (\log\log{x})^{1/\phi(q)-\epsilon}$ and without the constraint $p_{n+m}-p_n\le \epsilon\log{x}$, and a result of Freiberg \cite{Freiberg2} which showed for fixed $a,q,\epsilon$ infinitely many $n$ such that $p_{n+1}\equiv p_n\equiv a\pmod{q}$ and $p_{n+1}-p_n\le \epsilon \log{p_n}$.

We see that for fixed $m,q$, Theorem \ref{thrm:AveragedShiu} shows a positive proportion of primes $p_n$ are counted (and so our lower bound is of the correct order of magnitude). In particular, for a positive proportion of primes $p_n$ we have\footnote{This disproves the conjecture $\#\{p_n\le x:p_{n}\equiv p_{n+1}\equiv 1\pmod{4}\}=o(\pi(x))$ of Knapowski and Tur\'an \cite{Knapowski}.} $p_n\equiv p_{n+1}\equiv\dots\equiv p_{n+m}\equiv a \pmod{q}$ and $p_{n+m}-p_n\le \epsilon \log{p_n}$. This extends a result of Goldston, Pintz and Y\i ld\i r\i m \cite{GPY:PositiveProportion} which showed a positive proportion of $p_n$ have $p_{n+1}-p_n\le \epsilon\log{p_n}$.

\begin{thrm}\label{thrm:ShortIntervals}
Fix $m\in\mathbb{N}$ and $\epsilon>0$. There exists a $k=\exp(O(m))$, such that for $x>x_0(\epsilon,m)$ and $x^{7/12+\epsilon}\le y\le x$ and for any admissible set $\mathcal{L}=\{L_1,\dots,L_k\}$ where $L_i(n)=a_in+b_i$ with $1\le a_i\ll (\log{x})^{1/\epsilon}$ and $0\le b_i\ll x$ we have
\[\#\{n\in [x,x+y]: \text{at least $m$ of $L_i(n)$ are prime}\}\gg \frac{y}{(\log{x})^{k}}.\]
\end{thrm}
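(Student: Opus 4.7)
The plan is to apply Theorem~\ref{thrm:MainTheorem} with $\mathcal{A} = \{n \in \mathbb{Z} : x \le n < x+y\}$, $\mathcal{P} = \mathbb{P}$, $B = 1$, and a parameter $\theta = \theta(\epsilon) > 0$ to be specified. Then $\#\mathcal{A}(x) = y + O(1) \sim y$, and (after a harmless shift $n \mapsto n+1$ to guarantee $b_i \ge 1$, noting that admissibility forces $(a_i,b_i)=1$) the size conditions of Theorem~\ref{thrm:MainTheorem} are met with $\alpha$ slightly larger than $1$, since $a_i \ll (\log x)^{1/\epsilon}$, $b_i \ll x$, and $k = \exp(O(m))$ is a constant depending only on $m$.

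First I would verify Hypothesis~\ref{hypthss:Weak}. Parts~(1) and~(3) are immediate from the trivial $\#\mathcal{A}(x;q,a) = y/q + O(1)$: the sum in~(1) is $\ll x^\theta$, which is admissible provided $\theta$ is chosen strictly below $7/12+\epsilon$ with room for the $(\log x)^{100k^2}$ factor (no issue since $k$ is bounded in terms of $m$), and~(3) is automatic for $q \le y$. The key input is part~(2), which, after the change of variables $p = a_i n + b_i$, $q' = a_i q$, reduces (modulo the benign factor $a_i \ll (\log x)^{1/\epsilon}$) to the short-interval Bombieri--Vinogradov theorem
\[
\sum_{q \le x^{\theta'}} \max_{(a,q)=1}\Bigl| \psi(x+y;q,a) - \psi(x;q,a) - \frac{y}{\phi(q)} \Bigr| \ll_A \frac{y}{(\log x)^A},
\]
valid for every $A > 0$, some $\theta' = \theta'(\epsilon) > 0$, and $y$ in the range $x^{7/12+\epsilon} \le y \le x$. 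This estimate, due to Timofeev (and also accessible via Perelli--Pintz or Heath--Brown-style zero-density arguments), is the main obstacle in the proof and is precisely what forces the exponent $7/12$ in the statement.

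Once Hypothesis~\ref{hypthss:Weak} is in place, taking $q=1$ in the same estimate gives the short-interval prime number theorem in progressions
\[
\#\mathcal{P}_{L,\mathcal{A}}(x) = (1+o(1))\,\frac{a_i}{\phi(a_i)}\cdot\frac{y}{\log x},
\]
so the normalized average appearing in Theorem~\ref{thrm:MainTheorem} is $(1+o(1))\,y/\log x \sim \#\mathcal{A}(x)/\log x$; hence one may take $\delta = 1/2$, which comfortably exceeds $(\log k)^{-1}$. Choosing $k$ large enough that $C^{-1}(\log k)/2 \ge m$, i.e.\ $k = \exp(O(m))$, Theorem~\ref{thrm:MainTheorem} delivers $\gg \#\mathcal{A}(x)(\log x)^{-k}\exp(-Ck)$ integers $n \in \mathcal{A}(x)$ for which at least $m$ of $L_1(n),\dots,L_k(n)$ are prime. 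Since $k$ depends only on $m$, the factor $\exp(-Ck)$ is a constant that is absorbed into the implied constant, giving the stated bound $\gg y/(\log x)^k$.
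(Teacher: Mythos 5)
Your proposal is correct and follows essentially the same route as the paper: take $\mathcal{A}=[x,x+y]$, $\mathcal{P}=\mathbb{P}$, $B=1$, verify Hypothesis~\ref{hypthss:Weak} via Timofeev's short-interval Bombieri--Vinogradov theorem (parts (1) and (3) being trivial), note that $\delta$ may be taken to be an absolute constant, and invoke Theorem~\ref{thrm:MainTheorem} with $k=\exp(O(m))$. The paper states this more tersely (fixing $\theta=1/30-\epsilon$ as dictated by Timofeev's result), but the argument is the same.
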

Theorem \ref{thrm:ShortIntervals} relies on a Bombieri-Vinogradov type theorem for primes in intervals of length $x^{7/12+\epsilon}$, the best such result being due to Timofeev \cite{Timofeev}. By adapting Hypothesis \ref{hypthss:Weak} to allow for weighted sums instead of $\#\mathcal{P}_{L,\mathcal{A}}(x)$, we could use presumably the results of \cite{Harman} and \cite{Kumchev} to extend this to the wider range $x^{0.525}\le y\le x$.

Theorem \ref{thrm:ShortIntervals} explicitly demonstrates the claim from \cite{Maynard} that the method also shows the existence of bounded gaps between primes in short intervals, and for linear functions. We note that we would expect the lower bound to be of size $y/(\log{x})^m$, and so our bound is smaller that the expected truth by a factor of a fixed power of $\log{x}$. It appears such a loss is an unavoidable feature of the method when looking at bounded length intervals.

Our final application uses Theorem \ref{thrm:MainTheorem} to apply to a subset $\mathcal{P}$ of the primes. This extends the result of Thorner \cite{Thorner} to sets of linear functions, and with an explicit lower bound.
\begin{thrm}\label{thrm:Chebotarev}
Let $K/\mathbb{Q}$ be a Galois extension of $\mathbb{Q}$ with discriminant $\Delta_K$. There exists a constant $C_K$ depending only on $K$ such that the following holds. Let $\mathcal{C}\subseteq Gal(K/\mathbb{Q})$ be a conjugacy class in the Galois group of $K/\mathbb{Q}$, and let
\[\mathcal{P}=\Bigl\{p\text{ prime}: p\nmid \Delta_K,\, \Bigl[\frac{K/\mathbb{Q}}{p}\Bigr]=\mathcal{C}\Bigr\},\]
where $[\frac{K/\mathbb{Q}}{\cdot}]$ denotes the Artin symbol. Let $m\in\mathbb{N}$ and $k=\exp{(C_Km)}$. For any fixed admissible set $\mathcal{L}=\{L_1,\dots,L_k\}$ of $k$ linear functions $L_i(n)=a_in+b_i$ with $(a_i,\Delta_K)=1$ for each $1\le i\le k$, we have
\[\#\{x\le n\le 2x:\text{at least $m$ of $L_1(n),\dots,L_k(n)$ are in $\mathcal{P}$}\}\gg \frac{x}{(\log{x})^{\exp(C_Km)}},\]
provided $x\ge x_0(K,\mathcal{L})$.
\end{thrm}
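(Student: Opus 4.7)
The plan is to apply Theorem \ref{thrm:MainTheorem} with $\mathcal{A}=\mathbb{Z}$, with $\mathcal{P}$ the set of Chebotarev primes in the statement, and with $B$ equal to the product of $|\Delta_K|$ with $\prod_{i\le k} a_i$ (so that $B=O_\mathcal{L}(1)$ and $(q,B)=1$ implies $(q,a_i\Delta_K)=1$). Parts (1) and (3) of Hypothesis \ref{hypthss:Weak} are immediate for $\mathcal{A}=\mathbb{Z}$. For part (2), the condition $(q,B)=1$ forces $(q,a_i)=1$, so $\phi_{L_i}(q)=\phi(q)$, and the change of variables $m=a_i n+b_i$ reduces the required estimate for each $L_i$ to a Bombieri--Vinogradov theorem for primes in the Chebotarev class $\mathcal{C}$, summed over moduli of size at most $a_i x^\theta$.

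Writing $G=\mathrm{Gal}(K/\mathbb{Q})$ and $\pi_\mathcal{C}(x;q,a):=\#\{p\le x:p\equiv a\pmod q,\ [K/\mathbb{Q},p]=\mathcal{C}\}$, the required Chebotarev--Bombieri--Vinogradov theorem (due to M.\,R.~Murty and V.\,K.~Murty) asserts
\[\sum_{\substack{q\le x^{1/2-\epsilon}\\(q,\Delta_K)=1}}\max_{(a,q)=1}\Bigl|\pi_\mathcal{C}(x;q,a) - \frac{|\mathcal{C}|}{|G|}\cdot\frac{\pi(x)}{\phi(q)}\Bigr| \ll_{K,A} \frac{x}{(\log x)^A}\]
for any fixed $A>0$; it is proved via the large sieve applied to the characters of $\mathrm{Gal}(K(\zeta_q)/\mathbb{Q})$ together with Chebotarev with effective error term on small moduli. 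With $\theta=1/2-\epsilon$ this yields Hypothesis \ref{hypthss:Weak}(2). Combined with Chebotarev and the prime number theorem in arithmetic progressions, it also supplies the asymptotic
\[\#\mathcal{P}_{L_i,\mathcal{A}}(x) \sim \frac{|\mathcal{C}|}{|G|}\cdot\frac{a_i}{\phi(a_i)}\cdot\frac{x}{\log x},\]
where the hypothesis $(a_i,\Delta_K)=1$ guarantees that the Frobenius condition is independent of the residue class modulo $a_i$.

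Hence $(\phi(a_i)/a_i)\,\#\mathcal{P}_{L_i,\mathcal{A}}(x)\sim\delta_0\,x/\log x$ with $\delta_0:=|\mathcal{C}|/|G|$, so the density hypothesis of Theorem \ref{thrm:MainTheorem} holds with some $\delta\gg_K 1$ (the factor $\phi(B)/B$ is $\gg_\mathcal{L} 1$ and may be absorbed into $x_0(K,\mathcal{L})$). Choose $C_K$ large enough that $k=\exp(C_K m)$ satisfies $C^{-1}\delta\log k\ge m$; Theorem \ref{thrm:MainTheorem} then produces at least $m$ primes in $\mathcal{P}$ among $L_1(n),\dots,L_k(n)$ for $\gg x/((\log x)^k\exp(Ck))\gg x/(\log x)^{\exp(C'_K m)}$ integers $n\in[x,2x]$, which is the claim after relabelling $C'_K$. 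The main obstacle is the clean verification of the Chebotarev--Bombieri--Vinogradov estimate in exactly the averaged form demanded by Hypothesis \ref{hypthss:Weak}(2), i.e.\ controlling the interaction of the Frobenius condition with the progression mod $q$ and correctly excluding ramified primes via $B$; once this is granted, both the density calculation and the extraction of the final bound from Theorem \ref{thrm:MainTheorem} are essentially bookkeeping.
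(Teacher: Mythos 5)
Your proposal follows essentially the same route as the paper: verify Hypothesis \ref{hypthss:Weak} via the Murty--Murty Chebotarev--Bombieri--Vinogradov theorem (parts (1) and (3) being trivial for $\mathcal{A}=\mathbb{N}$), compute $\delta\gg_K 1$ from the Chebotarev density theorem in arithmetic progressions, and feed this into Theorem \ref{thrm:MainTheorem}. One small correction: the unconditional Murty--Murty estimate only gives level of distribution $\theta<\min(1/2,2/\#G)$, not $1/2-\epsilon$ in general, but since any fixed positive $\theta$ depending on $K$ suffices (the constant is absorbed into $C_K$), this does not affect the argument.
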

Thorner gives several arithmetic consequences of finding such primes of a given splitting type; we refer the reader to the paper \cite{Thorner} for such applications. 

As with Theorem \ref{thrm:ShortIntervals}, we only state the result for fixed $m$, because it relies on other work which establishes the Bombieri-Vinogradov type estimates of Hypothesis \ref{hypthss:Weak}, and these results only save an arbitrary power of $\log{x}$. One would presume these results can be extended to save $\exp(-c\sqrt{\log{x}})$ or similar (having excluded some possible bad moduli), which would allow uniformity for $m\le \epsilon\log\log{x}$, but we do not pursue this here. Similarly, the implied constant in the lower bounds of Theorem \ref{thrm:ShortIntervals} and Theorem \ref{thrm:Chebotarev} is not effective as stated, but presumably a small modification to the underlying results would allow us to obtain an effective bound.
\section{Notation}
We shall view $0<\theta<1$ and $\alpha>0$ as fixed real constants. All asymptotic notation such as $O(\cdot), o(\cdot), \ll, \gg$ should be interpreted as referring to the limit $x\rightarrow\infty$, and any constants (implied by $O(\cdot)$ or denoted by $c,C$ with subscripts) may depend on $\theta,\alpha$ but no other variable, unless otherwise noted. We will adopt the main assumptions of Theorem \ref{thrm:MainTheorem} throughout. In particular we will view $\mathcal{A}$, $\mathcal{P}$ as given sets of integers and primes respectively and $k=\#\mathcal{L}$ will be the size of $\mathcal{L}=\{L_1,\dots,L_k\}$ an admissible set of integer linear functions, and the coefficients $a_i,b_i\in\mathbb{Z}$ of $L_i(n)=a_i n+ b_i$, satisfy $|a_i|,|b_i|\le x^{\alpha}$ and $a_i\ne 0$. $B\le x^\alpha$ will be an integer, and $x,k$ will always to be assumed sufficiently large (in terms of $\theta,\alpha$).

All sums, products and suprema will be assumed to be taken over variables lying in the natural numbers $\mathbb{N}=\{1,2,\dots\}$ unless specified otherwise. The exception to this is when sums or products are over a variable $p$ (or $p'$), which instead will be assumed to lie in the prime numbers $\mathbb{P}=\{2,3,\dots,\}$.

Throughout the paper, $\phi$ will denote the Euler totient function, $\tau_r(n)$ the number of ways of writing $n$ as a product of $r$ natural numbers and $\mu$ the Moebius function. We let $\#\mathcal{A}$ denote the number of elements of a finite set $\mathcal{A}$, and $\mathbf{1}_\mathcal{A}(x)$ the indicator function of $\mathcal{A}$ (so $\mathbf{1}_\mathcal{A}(x)=1$ if $x\in\mathcal{A}$, and 0 otherwise). We let $(a,b)$ be the greatest common divisor of integers $a$ and $b$, and $[a,b]$ the least common multiple of integers $a$ and $b$. (For real numbers $x,y$ we also use $[x,y]$ to denote the closed interval. The usage of $[\cdot,\cdot]$ should be clear from the context.)

To simplify notation we will use vectors in a way which is somewhat non-standard. $\mathbf{d}$ will denote a vector $(d_1,\dots,d_k)\in\mathbb{N}^k$. Given a vector $\mathbf{d}$, when it does not cause confusion, we write $d=\prod_{i=1}^kd_i$. Given $\mathbf{d},\mathbf{e}$, we will let $[\mathbf{d},\mathbf{e}]=\prod_{i=1}^k[d_i,e_i]$ be the product of least common multiples of the components of $\mathbf{d},\mathbf{e}$, and similarly let $(\mathbf{d},\mathbf{e})=\prod_{i=1}^k(d_i,e_i)$ be the product of greatest common divisors of the components, and $\mathbf{d}|\mathbf{e}$ denote the $k$ conditions $d_i|e_i$ for each $1\le i\le k$. An unlabeled sum $\sum_{\mathbf{d}}$ should be interpreted as being over all $\mathbf{d}\in\mathbb{N}^k$.

\section{Outline}
The methods of this paper are based on the `GPY method' for detecting primes. The GPY method works by considering a weighted sum associated to an admissible set $\mathcal{L}=\{L_1,\dots,L_k\}$
\begin{equation}
S=\sum_{x\le n\le 2x}\Bigl(\sum_{i=1}^k\mathbf{1}_{\mathbb{P}}(L_i(n))-m\Bigr)w_n,\label{eq:BasicGPY}
\end{equation}
where $m$ and $k$ are fixed integers, $x$ is a large positive number and $w_n$ are some non-negative weights (typically chosen to be of the form of the weights in Selberg's $\Lambda^2$ sieve).

If $S>0$, then at least one integer $n$ must make a positive contribution to $S$. Since the weights $w_n$ are non-negative, if $n$ makes a positive contribution then the term in parentheses in \eqref{eq:BasicGPY} must be positive at $n$, and so at least $m+1$ of the $L_i(n)$ must be prime. Thus to show at least $m+1$ of the $L_i(n)$ are simultaneously prime infinitely often, it suffices to show that $S>0$ for all large $x$.

The shape of $S$ means that one can consider the terms weighted by $\mathbf{1}_{\mathbb{P}}(L_i(n))$ separately for each $L_i\in\mathcal{L}$, which makes these terms feasible to estimate accurately using current techniques. In particular, the only knowledge about the joint behaviour of the prime values of the $L_i$ is derived from the pigeonhole principle described above.

The method only succeeds if the weights $w_n$ are suitably concentrated on integers $n$ when many of the $L_i(n)$ are prime. To enable an unconditional asymptotic estimate for $S$, the $w_n$ are typically chosen to mimic sieve weights, and in particular Selberg sieve weights (which tend to be the best performing weights when the `dimension' $k$ of the sieve is large). One can then hope to estimate a quantity involving such sieve weights provided one can prove suitable equidistribution results in arithmetic progressions. The strength of concentration of the weights $w_n$ on primes depends directly on the strength of equidistribution results available.

The original work of Goldston Pintz and Y\i ld\i r\i m showed that one could construct weights $w_n$ which would show that $S>0$ for $m=1$ (and for $k$ sufficiently large) if one could prove a suitable extension of the Bombieri-Vinogradov theorem. Zhang \cite{Zhang} succeeded in proving such an extension\footnote{The actual form of Zhang's extension is slightly weaker than that considered in original conditional result of Goldston, Pintz and Y\i ld\i r\i m, although it is sufficient for the argument.}, and as a consequence showed the existence of bounded gaps between primes.

The author's work \cite{Maynard} introduced a modification to the choices of the sieve weights $w_n$ (this modification was also independently discovered by Terence Tao at the same time). This modification enables $w_n$ to be rather more concentrated on $n$ for which many of the $L_i(n)$ are prime. This allows one to show $S>0$ for any $m\in\mathbb{N}$, and moreover the method works even if one has much more limited knowledge about primes in arithmetic progressions.

As remarked in \cite{Maynard}, the fact that the method now works even with only a limited amount of knowledge about primes in arithmetic progressions makes it rather flexible, and in particular applicable to counting primes in subsets, where we have more limited equidistribution results. Moreover, it is possible to exploit the flexibility of the the pigeonhole principle setup in \eqref{eq:BasicGPY} to consider slightly more exotic combinations, which can ensure that the $n$ making a positive contribution to $S$ also satisfy `typical' properties.

Therefore we can consider modified sums of the form
\[S=\sum_{n\in\mathcal{A}(x)}\Bigl(\sum_{i=1}^k\mathbf{1}_\mathcal{P}(L_i(n))-m-k\mathbf{1}_\mathcal{B}(n)\Bigr)w_n\]
for some set of integers $\mathcal{A}$, set of primes $\mathcal{P}$ and set of `atypical' integers $\mathcal{B}$. Provided we have some weak distribution results available (such as those asserted by Hypothesis \ref{hypthss:Weak}) then we can estimate all the terms involved in this sum. Again, by the pigeonhole principle, we see that if $n\in\mathcal{A}(x)$ makes a positive contribution to $S$, then at least $m+1$ of the $L_i(n)$ are primes in $\mathcal{P}$, and that $n\notin\mathcal{B}$. We expect that if $\mathcal{B}$ represents an `atypical' set, and $\mathcal{P}$ is not too sparse (relative to $\mathcal{A}$) then we can choose $w_n$ similarly to before and show that $S>0$ for $k$ sufficiently large. Moreover, by modifying some of the technical aspects of the method in \cite{Maynard}, we can obtain suitable uniform estimates for such sums $S$ even when we allow the coefficients $a_i,b_i$ of $L_i(n)=a_i n+b_i$, the number $k$ of functions and the number $m$ of primes we find to vary with $x$ in certain ranges.

Our work necessarily builds on previous work in \cite{Maynard}, and a certain degree of familiarity with \cite{Maynard} is assumed.

\section{Proof of theorems \ref{thrm:MainTheorem}, \ref{thrm:UniformSmallGaps}, \ref{thrm:AveragedShiu}, \ref{thrm:ShortIntervals} and \ref{thrm:Chebotarev}}
The proof of theorems \ref{thrm:MainTheorem}-\ref{thrm:Chebotarev} relies on the following key proposition.
\begin{prpstn}\label{prpstn:MainProp}
Let $\alpha>0$ and $0<\theta<1$. Let $\mathcal{A}$ be a set of integers, $\mathcal{P}$ a set of primes, $\mathcal{L}=\{L_1,\dots,L_k\}$ an admissible set of $k$ linear functions, and $B,x$ integers. Assume that the coefficients $L_i(n)=a_in+b_i\in\mathcal{L}$ satisfy $|a_i|,|b_i|\le x^{\alpha}$ and $a_i\ne 0$ for all $1\le i\le k$, and that $k\le (\log{x})^\alpha$ and $1\le B\le x^\alpha$. Let $x^{\theta/10}\le R\le x^{\theta/3}$. Let $\rho,\xi$ satisfy $k(\log\log{x})^2/(\log{x})\le \rho,\xi\le \theta/10$, and define
\[\mathcal{S}(\xi;D)=\{n\in\mathbb{N}:p|n\implies (p> x^\xi\text{ or }p|D)\}.\]
There is a constant $C$ depending only on $\alpha$ and $\theta$ such that the following holds. If $k\ge C$ and $(\mathcal{A},\mathcal{L},\mathcal{P},B,x,\theta)$ satisfy Hypothesis \ref{hypthss:Weak}, then there is a choice of nonnegative weights $w_n=w_n(\mathcal{L})$ satisfying

\[w_n\ll (\log{R})^{2k}\prod_{i=1}^k\prod_{p|L_i(n), p\nmid B}4\]
 such that
\begin{enumerate}
\item We have \[\sum_{n\in\mathcal{A}(x)}w_n=\Bigl(1+O\Bigl(\frac{1}{(\log{x})^{1/10}}\Bigr)\Bigr)\frac{B^k}{\phi(B)^k}\mathfrak{S}_B(\mathcal{L})\#\mathcal{A}(x)(\log{R})^k I_k.\]
\item For any $L(n)=a_Ln+b_L\in\mathcal{L}$ with $L(n)>R$ on $[x,2x]$, we have
\begin{align*}
\sum_{n\in\mathcal{A}(x)}\mathbf{1}_\mathcal{P}(L(n))w_n\ge\Bigl(1+O\Bigl(\frac{1}{(\log{x})^{1/10}}\Bigr)\Bigr)\frac{B^{k-1}}{\phi(B)^{k-1}}\mathfrak{S}_B(\mathcal{L})\frac{\phi(|a_L|)}{|a_L|}\#\mathcal{P}_{L,\mathcal{A}}(x)(\log{R})^{k+1} J_{k}\\
+O\Bigl(\frac{B^k}{\phi(B)^k}\mathfrak{S}_B(\mathcal{L})\#\mathcal{A}(x)(\log{R})^{k-1} I_k\Bigr).
\end{align*}
\item For $L=a_0n+b_0\notin\mathcal{L}$ and $D\le x^{\alpha}$, if $\Delta_L\ne 0$ we have
\[\sum_{n\in\mathcal{A}(x)}\mathbf{1}_{\mathcal{S}(\xi;D)}(L(n))w_n\ll \xi^{-1}\frac{\Delta_L}{\phi(\Delta_L)}\frac{D}{\phi(D)}\frac{B^k}{\phi(B)^k}\mathfrak{S}_B(\mathcal{L})\#\mathcal{A}(x)(\log{R})^{k-1} I_{k},\]
where
\[\Delta_L=|a_0|\prod_{j=1}^k|a_0b_j-b_0a_j|.\]
\item For $L\in\mathcal{L}$ we have
\[\sum_{n\in\mathcal{A}(x)}\Bigl(\sum_{\substack{p|L(n)\\p<x^\rho\\ p\nmid B}}1\Bigr)w_n\ll \rho^2 k^4(\log{k})^2\frac{B^k}{\phi(B)^k}\mathfrak{S}_B(\mathcal{L})\#\mathcal{A}(x)(\log{R})^{k} I_{k}.\]
\end{enumerate}
Here $I_k$, $J_k$ are quantities depending only on $k$, and $\mathfrak{S}_B(\mathcal{L})$ is a quantity depending only on $\mathcal{L}$, and these satisfy
\begin{align*}
\mathfrak{S}_B(\mathcal{L})&=\prod_{p\nmid B}\Bigl(1-\frac{\#\{1\le n\le p:p|\prod_{i=1}^k L_i(n)\}}{p}\Bigr)\Bigl(1-\frac{1}{p}\Bigr)^{-k}\gg \frac{1}{\exp(O(k))},\\
I_k&=\int_0^\infty\dotsi\int_0^\infty F^2(t_1,\dots,t_k) dt_1\dots dt_k\gg (2k\log{k})^{-k},\\
J_k&=\int_0^\infty \dots \int_0^\infty \Bigl(\int_0^\infty F(t_1,\dots,t_k)  dt_k\Bigr)^2 dt_1\dots dt_{k-1}\gg \frac{\log{k}}{k}I_k,
\end{align*}
for a smooth function $F=F_k:\mathbb{R}^k\rightarrow \mathbb{R}$ depending only on $k$. Moreover, if all functions $L\in\mathcal{L}$ are of the form $L=an+b_L$, for some fixed $a$ and $b_L\ll \log{x}/(k\log{k})$, then for $\eta\ge (\log{x})^{-9/10}$, we have
\[\sum_{\substack{b\ll \eta\log{x}\\ L(n)=an+b}}\frac{\Delta_L}{\phi(\Delta_L)}\ll \eta(\log{x})(\log{k}).\]
Here the implied constants depend only on $\theta,\alpha,$ and the implied constants from Hypothesis \ref{hypthss:Weak}.
\end{prpstn}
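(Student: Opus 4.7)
The plan is to follow the Maynard--Tao sieve construction of \cite{Maynard}, adapted to count $n\in\mathcal{A}$ with $L_i(n)\in\mathcal{P}$ and to track uniformity in $k$. First, using admissibility, I would fix a residue $v_0\pmod B$ with $(\prod_iL_i(v_0),B)=1$ and take $w_n=0$ unless $n\equiv v_0\pmod B$, in which case
\[w_n=\Bigl(\sum_{\substack{d_i\mid L_i(n)\,\forall i\\ (d_i,B)=1,\,(d_i,d_j)=1\\ \prod_id_i<R}}\lambda_{\mathbf{d}}\Bigr)^2,\qquad \lambda_{\mathbf{d}}=\Bigl(\prod_i\mu(d_i)d_i\Bigr)\sum_{\mathbf{r}:\mathbf{d}\mid\mathbf{r}}\frac{\mu(\mathbf{r})^2}{\prod_i\phi_{L_i}(r_i)}F\Bigl(\frac{\log r_1}{\log R},\dots,\frac{\log r_k}{\log R}\Bigr),\]
for the smooth $F$ on the simplex from \cite{Maynard}. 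The pointwise bound on $w_n$ is then immediate from $|\lambda_{\mathbf{d}}|\ll(\log R)^k$ together with counting squarefree divisors of $L_i(n)$ coprime to $B$.

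For parts (1) and (2), I would expand the square, swap the order of summation, and reduce to counts of $\#\mathcal{A}(x;q,a)$ or $\#\mathcal{P}_{L,\mathcal{A}}(x;q,a)$ for the combined modulus $q=B[\mathbf{d},\mathbf{e}]\le x^\theta$. Hypothesis \ref{hypthss:Weak}(1)--(2) replaces each count by its expected value, the total error being absorbed because the $\tau^{O(k)}$-type divisor loss from $|\lambda_{\mathbf{d}}\lambda_{\mathbf{e}}|$ is $x^{o(1)}$ while the hypothesis saves $(\log x)^{-100k^2}$. The remaining main term is diagonalized exactly as in \cite{Maynard}: the restriction $(d_i,B)=1$ produces the singular series $\mathfrak{S}_B(\mathcal{L})$, and the decoupled $F$-integrals yield $I_k$ (for (1)) and $J_k$ (for (2)), with the factor $\phi(|a_L|)/|a_L|$ in (2) emerging from $\phi_L(q)=\phi(|a_L|q)/\phi(|a_L|)$.

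Part (3) rests on a Selberg-type upper bound: $\mathbf{1}_{\mathcal{S}(\xi;D)}(L(n))$ is majorized via the fundamental lemma by a sum supported on squarefree $d\mid L(n)$ with $d\le x^\xi$ and $(d,D)=1$, producing the Mertens factor $\xi^{-1}$, while the local densities at primes $p\mid\Delta_L$ (where $L\bmod p$ shares a root with some $L_i$) contribute $\Delta_L/\phi(\Delta_L)$ and those at $p\mid D$ contribute $D/\phi(D)$. Part (4) follows by interchanging the sums over $n$ and $p$ and treating each small $p\nmid B$ as an extra congruence on $n$; repeating the diagonalization of (1) with this additional congruence yields an inner sum with an extra factor $\approx 1/(p-1)$ in the $F$-integral, and summing over $p<x^\rho$ gives the bound $\rho^2k^4(\log k)^2$ after using standard $L^2$-estimates on the derivatives of the optimized $F$.

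Finally, $\mathfrak{S}_B(\mathcal{L})\gg\exp(-O(k))$ follows from admissibility and Mertens' theorem (primes $p>k$ contribute $1+O(k^2/p^2)$, primes $p\le k$ each contribute a factor of size at most $k^{O(1)}$), the bounds on $I_k,J_k$ are the explicit output of the construction of $F$ in \cite{Maynard}, and the ``moreover'' claim reduces, via $n/\phi(n)=\sum_{d\mid n}\mu^2(d)/\phi(d)$ applied to $\Delta_L=|a|^{k+1}\prod_j|b_j-b|$, to a sum over divisors $\mathbf{d}$ with $d_j\mid b_j-b$; resolving by CRT and summing $b$ over an interval of length $\eta\log x$ gives $\eta(\log x)\prod_p(1+\min(p,k)/(p(p-1)))$, which is $\ll\eta(\log x)\log k$ by Mertens. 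The main obstacle throughout will be uniformity in $k$: the $(\log x)^{-100k^2}$-savings of Hypothesis \ref{hypthss:Weak} and all implied constants from the diagonalization in \cite{Maynard} need to be tracked explicitly as functions of $k$, which is the technical heart of the argument.
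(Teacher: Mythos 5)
Your overall strategy matches the paper's: a Maynard--Tao square of divisor sums, expansion and appeal to Hypothesis \ref{hypthss:Weak} for the congruence counts, a Selberg majorant of $\mathbf{1}_{\mathcal{S}(\xi;D)}$ for part (3), and a prime-by-prime treatment for part (4). However, there are two genuine gaps. First, after expanding the square you must handle pairs $\mathbf{d},\mathbf{e}$ with $p\mid(d_ie_i,d_je_j)$ for some $i\ne j$: imposing $(d_i,d_j)=1$ inside a single divisor sum does not make these cross terms vanish, since the conditions $p\mid L_i(n)$ and $p\mid L_j(n)$ are compatible whenever $p\mid a_ib_j-a_jb_i$. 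In \cite{Maynard} these terms are killed by the $W$-trick, but here the coefficients may be as large as $x^\alpha$, so the discriminants $a_ib_j-a_jb_i$ can have enormous prime factors and no primorial $W$ of acceptable size removes them; your restriction to a single class $v_0\pmod{B}$ does nothing for this (and in fact restricting $n$ modulo $B$ is not what is wanted --- the role of $B$ is only that the sieve moduli must be coprime to $B$ so that Hypothesis \ref{hypthss:Weak}(2) applies; fixing $n\equiv v_0\pmod B$ would also change the normalisation of the main term). The paper's fix is structural: for each prime $p\nmid WB$ and each residue class $r$ with $p\mid\prod_iL_i(r)$ it designates a single index $j_{p,r}$ and restricts the support of $\lambda_{\mathbf{d}}$ to the set $\mathcal{D}_k$ where $p\mid d_j$ is permitted only for the designated $j$; this forces $(d_i,e_j)=1$ for $i\ne j$ automatically. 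Some such device is indispensable for the claimed uniformity and is absent from your argument.

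Second, in part (4) your accounting does not produce the factor $\rho^2$. If the extra congruence $p\mid L(n)$ merely contributed a factor $\approx 1/(p-1)$ to the diagonalised main term, then summing over $p<x^\rho$ would give $\sum_{p<x^\rho}1/p\asymp\log\log x$, not $O(\rho^2\cdot k^4(\log k)^2)$. The point (equation \eqref{eq:pCancellation} in the paper) is that when one writes $y_{\mathbf{r}}=y_{\mathbf{u}}+(y_{\mathbf{r}}-y_{\mathbf{u}})$, with $\mathbf{u}$ obtained from $\mathbf{r}$ by deleting the factor $p$, the contribution of the $y_{\mathbf{u}}$ term vanishes identically; only the difference survives, and it is $\ll T_kY_{\mathbf{u}}\log p/\log R$ by the smoothness of $F$. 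This yields a per-prime contribution $\ll p^{-1}(\log p/\log R)^2$, whose sum over $p<x^\rho$ is $O(\rho^2)$. Your reference to ``$L^2$-estimates on the derivatives of $F$'' gestures at the right object but omits the exact cancellation without which the bound fails. (Minor further points: the tail $d>\eta\log x$ in the divisor-sum treatment of $\sum_b\Delta_L/\phi(\Delta_L)$ needs a separate argument, as does the regime $k\gg\log\log x$; and the uniform bounds on $I_k$, $J_k$ require a specific smooth $F$ with controlled derivatives, not merely the optimiser from \cite{Maynard}.)
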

Assuming Proposition \ref{prpstn:MainProp}, we now establish theorems \ref{thrm:MainTheorem}-\ref{thrm:Chebotarev} in turn.
\begin{proof}[Proof of Theorem \ref{thrm:MainTheorem}]
We first note that by passing to a subset of $\mathcal{L}$, it is sufficient to show that in the restricted range $C\le k\le (\log{x})^{1/5}$ we have the weaker bound
\begin{equation}\#\{n\in\mathcal{A}(x):  \#(\{L_1(n),\dots,L_k(n)\}\cap\mathcal{P})\ge C^{-1}\delta\log{k}\}\gg \frac{\#\mathcal{A}(x)}{(\log{x})^{k}\exp(C k^5)}.\label{eq:WeakTheorem}\end{equation}
The main result then follows with a suitably adjusted value of $C$.

For $m\in\mathbb{N}$, we consider the sum
\begin{equation}
S=\sum_{n\in\mathcal{A}(x)}\Bigl(\sum_{i=1}^k\mathbf{1}_{\mathcal{P}}(L_i(n))-m-k\sum_{i=1}^k\sum_{\substack{p|L_i(n)\\p<x^\rho\\ p\nmid B}}1\Bigr)w_n=S_1-S_2-S_3,
\label{eq:SDef}
\end{equation}
where $w_n$ are the weights whose existence is guaranteed by Proposition \ref{prpstn:MainProp}. We note that for any $n\in\mathcal{A}(x)$, the term in parentheses in \eqref{eq:SDef} is positive only if at least $m+1$ of the $L_i(n)$ are primes in $\mathcal{P}$, and none of the $L_i(n)$ have any prime factors $p\nmid B$ less than $x^\rho$. Moreover, we see that if this is the case then since $a_i,b_i<x^\alpha$, each $L_i(n)$ can have at most $O(1/\rho)$ prime factors $p\nmid B$, and so 
\begin{equation}
w_n\ll (\log{x})^{2k}\prod_{i=1}^k\prod_{\substack{p|L_i(n) \\ p\nmid B}}4 \ll (\log{x})^{2k}\exp(O(k/\rho)).
\label{eq:WnBnd}
\end{equation}
Since the term in parentheses in \eqref{eq:SDef} can be at most $k$, we have that
\begin{equation}
\#\{n\in\mathcal{A}(x):  \#(\{L_1(n),\dots,L_k(n)\}\cap\mathcal{P})\ge m\}\gg\frac{S}{k(\log{x})^{2k}\exp(O(k/\rho))}.\label{eq:RealToS}
\end{equation}
Thus it is sufficient to obtain a suitable lower bound for $S$. (Essentially the same idea has been used by Goldston, Pintz and Y\i ld\i r\i m in \cite{GPY:PositiveProportion}.)
Using Proposition \ref{prpstn:MainProp}, we have
\begin{align}
S_1&=\sum_{n\in\mathcal{A}(x)}\sum_{i=1}^k\mathbf{1}_{\mathcal{P}}(L_i(n))w_n\ge(1+o(1))\frac{B^{k-1}}{\phi(B)^{k-1}}\mathfrak{S}_B(\mathcal{L})(\log{R})^{k+1}J_k\sum_{i=1}^k\frac{\phi(a_i)}{a_i}\#\mathcal{P}_{L_i,\mathcal{A}}\\
&\qquad\qquad\qquad\qquad\qquad+o\Bigl(\frac{B^k}{\phi(B)^k}\mathfrak{S}_B(\mathcal{L})\#\mathcal{A}(x)(\log{R})^kI_k\Bigr),\nonumber\\
S_2&=m\sum_{n\in\mathcal{A}(x)}w_n=m(1+o(1))\frac{B^{k}}{\phi(B)^k}\mathfrak{S}_B(\mathcal{L})\#\mathcal{A}(x)(\log{R})^{k}I_k,\\
S_3&=k\sum_{n\in\mathcal{A}(x)}\sum_{i=1}^k\sum_{\substack{p|L_i(n)\\p<x^\rho\\ p\nmid B}}w_n\ll \rho^2 k^6(\log{k})^2\frac{B^k}{\phi(B)^k}\mathfrak{S}_B(\mathcal{L})\#\mathcal{A}(x)(\log{R})^kI_k.
\end{align}
We choose $\rho=c_0 k^{-3}(\log{k})^{-1}$ with $c_0$ a small absolute constant such that $S_3\le (1/3+o(1))S_2$. (This choice satisfies the bounds of Proposition \ref{prpstn:MainProp} since $k\le (\log{x})^{1/5}$ and $k$ is taken to be sufficiently large in terms of $\theta$.) Thus, for $x$ sufficiently large, we have
\begin{align}
S\ge\frac{B^k}{\phi(B)^k}\mathfrak{S}_B(\mathcal{L})(\log{R})^k\Bigl(\frac{J_k}{2}\log{R}\sum_{i=1}^k\frac{\phi(a_i)\phi(B)}{a_iB}\#\mathcal{P}_{L_i,\mathcal{A}}(x)-2mI_k\#\mathcal{A}(x)\Bigr).\label{eq:SLowerBound}
\end{align}
By the assumption of Theorem \ref{thrm:MainTheorem}, we have 
\begin{equation}
\frac{1}{k}\sum_{i=1}^k\frac{\phi(a_i)\phi(B)}{a_iB}\#\mathcal{P}_{L_i,\mathcal{A}}(x)\ge \delta \frac{\#\mathcal{A}(x)}{\log{x}}.\label{eq:PrimeBound}
\end{equation}
From Proposition \ref{prpstn:MainProp}, we have $J_k/I_k\gg (\log{k})/k$. Combining this with \eqref{eq:SLowerBound} and \eqref{eq:PrimeBound}, we have (for $x$ sufficiently large)
\begin{equation}
S\ge (\theta/3)^k\frac{B^k}{\phi(B)^k}\mathfrak{S}_B(\mathcal{L})\#\mathcal{A}(x) (\log{x})^k I_k\Bigl(3c_1\delta\log{k}-2m\Bigr),
\end{equation}
for some constant $c_1$ depending only on $\theta$. In particular, if $m=c_1\delta \log{k}$, then $m\gg 1$ (since $\delta\ge(\log{k})^{-1}$ by assumption), and $S>0$. Using the bounds $I_k\gg (2k\log{k})^{-k}$ and $\mathfrak{S}_B(\mathcal{L})\ge \exp(-Ck)$ from Proposition \ref{prpstn:MainProp}, along with the trivial bound $B/\phi(B)\ge 1$, we obtain
\begin{equation}
S\gg (\theta/3)^k\frac{B^{k}}{\phi(B)^{k}}\mathfrak{S}_B(\mathcal{L})\#\mathcal{A}(x)(\log{x})^kI_k\gg \#\mathcal{A}(x)(\log{x})^k\exp(-C_2 k^2),
\end{equation}
for a suitable constant $C_2$ depending only on $\theta$. Combining this with \eqref{eq:RealToS}, and recalling our choices of $m,\rho$ gives for $x\ge C_3$
\begin{equation}
\#\{n\in\mathcal{A}(x):  \#(\{L_1(n),\dots,L_k(n)\}\cap\mathcal{P})\ge c_1\delta \log{k}\}\ge\frac{\#\mathcal{A}(x)\exp(-C_3 k^5)}{(\log{x})^{k}},
\end{equation}
provided $C_3$ is chosen sufficiently large in terms of $\theta$ and $\alpha$. This gives \eqref{eq:WeakTheorem}, and so the first claim of the theorem.

 For the second claim, we have $L_i=an+b_i$ for all $1\le i\le k$, with $a\ll 1$ and $b_i\le \eta (\log{x})$. (We will eventually take $\eta=c_4(k\log{k})^{-1}$, for some fixed $c_4$ which implies the bound in the statement of Theorem \ref{thrm:MainTheorem}.) In place of $S$ we consider
\begin{align}
S'&=\sum_{n\in\mathcal{A}(x)}\Bigl(\sum_{i=1}^k\mathbf{1}_{\mathcal{P}}(L_i(n))-m-k\sum_{i=1}^k\sum_{\substack{p|L_i(n)\\p<x^\rho, p\nmid B}}1-k\sum_{\substack{b\le \eta\log{x}\\ L=an+b\notin\mathcal{L}}}\mathbf{1}_{\mathcal{S}(\theta/10;1)}(L(n))\Bigr)w_n\label{eq:S'Def}\\
&= S_1-S_2-S_3-S_4.\nonumber
\end{align}
The term in parentheses in \eqref{eq:S'Def} is positive only if at least $m$ of the $L_i(n)$ are primes, none of the $L_i(n)$ have a prime factor $p\nmid B$ smaller than $x^{\rho}$, and all integers not in $\{L_1(n),\dots,L_k(n)\}$ of the form $an+b$ with $b\le \eta\log{x}$ have a prime factor less than $x^{\theta/10}$. In particular, there can be no primes in the interval $[an,an+\eta(\log{x})]$ apart from possibly $\{L_1(n),\dots,L_k(n)\}$, and so the primes counted in this way must be consecutive. 

For $S_4$, we notice that $\Delta_L\ne 0$ for all $L$ we consider since any $L$ has the same lead coefficient as the $L_i$ (and so can't be a multiple of one of them). By Proposition \ref{prpstn:MainProp}, we have
\begin{equation}
S_4\ll k\frac{B^k}{\phi(B)^k}\#\mathcal{A}(x)(\log{R})^{k-1}\mathfrak{S}_B(\mathcal{L})I_k\sum_{\substack{b\le \eta\log{x}\\ L=an+b\notin\mathcal{L}}}\frac{\Delta_L}{\phi(\Delta_L)}\ll \eta k(\log{k})S_2.\label{eq:S4Bound}
\end{equation}
We choose $\eta=c_4/(k\log{k})$ for some sufficiently small constant $c_4$ (this satisfies the requirements of Proposition \ref{prpstn:MainProp}). We then see that the bound \eqref{eq:SLowerBound} holds for $S'$ in place of $S$ provided $x,k$ are sufficiently large. The whole argument then goes through as before.
\end{proof}
\begin{proof}[Proof of Theorem \ref{thrm:UniformSmallGaps}]
We note that the result is trivial if $y\gg (\log{x})^2$, $y=O(1)$ or $x=O(1)$ by the pigeonhole principle, Bertrand's postulate and the prime number theorem. Therefore, by changing the implied constant if necessary, it is sufficient to establish the result for $y\le (\log{x})^{1/5}$ with $y$ sufficiently large.

We take $\theta=1/3$, $\mathcal{P}=\mathbb{P}$, $\mathcal{A}=\mathbb{N}$, $\mathcal{L}=\{L_1,\dots,L_k\}$, with $L_i(n)=n+h_i$, where $h_i$ is the $i^{th}$ prime larger than $k$. By the prime number theorem, $h_i\le 2k\log{k}$ for all $i$ (provided $k$ is sufficiently large). This is an admissible set. 

By the Landau-Page theorem (see, for example, \cite[Chapter 14]{Davenport}) there is at most one modulus $q_0\le \exp(2c_1\sqrt{\log{x}})$ such that there exists a primitive character $\chi$ modulo $q_0$ for which $L(s,\chi)$ has a real zero larger than $1-c_2(\log{x})^{-1/2}$ (for suitable fixed constants $c_1,c_2$). If this exceptional modulus $q_0$ exists, we take $B$ to be the largest prime factor of $q_0$, and otherwise we take $B=1$. For all $q\le \exp(c_1\sqrt{\log{x}})$ with $q\ne q_0$ we then have the effective bound (see, for example, \cite[Chapter 20]{Davenport})
\begin{equation}
\phi(q)^{-1}\sideset{}{^*}\sum_{\chi}|\psi(x,\chi)|\ll x\exp(-3c_1\sqrt{\log{x}}),
\end{equation}
where the summation is over all primitive $\chi\mod{q}$ and $\psi(x,\chi)=\sum_{n\le x}\chi(n)\Lambda(n)$. Following a standard proof of the Bombieri-Vinogradov Theorem (see \cite[Chapter 28]{Davenport}, for example), we have
\begin{equation}
\sum_{\substack{q<x^{1/2-\epsilon}\\ (q,B)=1}}\sup_{(a,q)=1}\Bigl|\pi(x;q,a)-\frac{\pi(x)}{\phi(q)}\Bigr|\ll x\exp(-c_1\sqrt{\log{x}})+\log{x}\sum_{\substack{q<\exp(2c_1\sqrt{\log{x}})\\ (q,B)=1}}\sideset{}{^*}\sum_\chi\frac{|\psi'(x,\chi)|}{\phi(q)}.
\end{equation}
With this choice of parameters, we therefore have error terms for part $(ii)$ of Hypothesis \ref{hypthss:Weak} of size $\#\mathcal{A}(x)\exp(-c_3\sqrt{\log{x}})$, and so Hypothesis \ref{hypthss:Weak} holds for $(\mathcal{A},\mathcal{L},\mathcal{P},B,x,1/3)$ for any $k\le (\log{x})^{1/5}$ provided $k$ is sufficiently large, since parts $(i)$ and $(iii)$ are trivial. Moreover, if $q_0$ exists it must be square-free apart from a possible factor of at most 4, and must satisfy $q_0\gg (\log{x})/(\log\log{x})^2$ (from the class number formula). Therefore if $q_0$ exists, $\log\log{x}\ll B\ll \exp(c_1\sqrt{\log{x}})$. Thus, whether or not $q_0$ exists, we have
\begin{equation}
\frac{B}{\phi(B)}=1+O\Bigl(\frac{1}{\log\log{x}}\Bigr).
\end{equation}

We have
\begin{equation}
\#\mathcal{P}_{L,\mathcal{A}}(x)=\frac{(1+o(1))x}{\log{x}}=\frac{(1+o(1))\#\mathcal{A}(x)}{\log{x}},
\end{equation}
and so we may take $\delta=(1+o(1))$ in Theorem \ref{thrm:MainTheorem}. Theorem \ref{thrm:MainTheorem} then gives
\begin{equation}
\#\{x\le n\le 2x: \pi(n+2k\log{k})-\pi(n)\gg \log{k}\}\gg \frac{x}{(\log{x})^{k}\exp(Ck)}.
\end{equation}
Thus, given any $x,y$ suitably large with $y\le (\log{x})^{1/5}$ we can take $k=\lfloor y/(2\log{y})\rfloor$, and see that the above gives the result. All constants we have used are effectively computable.
\end{proof}
\begin{proof}[Proof of Theorem \ref{thrm:AveragedShiu}]
To get lower bounds of the correct order of magnitude, we average over admissible sets. 
We assume without loss of generality that $a$ is reduced modulo $q$, so $1\le a< q$. We then adopt the same set-up as in the proof of Theorem \ref{thrm:UniformSmallGaps} for our choice of $\mathcal{A},\mathcal{P},\theta,R$. If an exceptional modulus $q_0$ exists (as defined in the proof of Theorem \ref{thrm:UniformSmallGaps}), then we take $B$ to be the largest prime factor of $q_0$ coprime to $q$. Since $q\le (\log{x})^{1-\epsilon}$ and $q_0\gg \log{x}$ (with $q_0$ essentially square-free) we have $\log\log{x}\ll_\epsilon B\ll x$ if $q_0$ exists. Thus $B/\phi(B)=1+o(1)$ regardless of whether $q_0$ exists.

Instead of our individual choice of $\mathcal{L}$, we will average over all admissible choices of $\mathcal{L}$ with $\#\mathcal{L}=k$ and where $\mathcal{L}=\{L_1,\dots,L_k\}$ contains functions of the form  $L_i(n)=qn+a+qb_i$ with $qb_i\le \eta\log{x}$. We write $\mathcal{L}(\mathbf{b})$ for such a set given by $b_1,\dots,b_k$. We consider
\begin{align}
S''&=\sum_{\substack{b_1< \dots< b_k\\ qb_k\le \eta\log{x}\\ \mathcal{L}=\mathcal{L}(\mathbf{b})\text{ admissible}}}\sum_{n\in\mathcal{A}(x)}\Bigl(\sum_{i=1}^k\mathbf{1}_{\mathcal{P}}(L_i(n))-m-k\sum_{i=1}^k\sum_{\substack{p|L_i(n)\\p<x^\rho,p\nmid B}}1-k\sum_{\substack{b\le 2\eta\log{x}\\ L=qn+b\notin\mathcal{L}}}\mathbf{1}_{\mathcal{S}(\rho;B)}(L(n))\Bigr)w_n(\mathcal{L}).
\label{eq:SppDef}
\end{align}
Here $w_n(\mathcal{L})$ are the weights given by Proposition \ref{prpstn:MainProp} for the admissible set $\mathcal{L}=\mathcal{L}(\mathbf{b})$. For a given admissible set $\mathcal{L}$, the sum over $n$ is then essentially the same quantity as $S'$ from \eqref{eq:S'Def}, except in the final term in parentheses we are considering elements with no prime factor less than $x^\rho$ instead of $x^{\theta/10}$.

We see the term in parentheses in \eqref{eq:SppDef} is positive only if at least $m$ of the $L_i(n)$ are primes, all the remaining $L_i(n)$ have no prime factors $p\nmid B$ less than $x^\rho$, and all other $qn+b$ with $b\le 2\eta\log{n}$ have a prime factor $p\nmid B$ less than $x^\rho$. We see from this than no $n$ can make a positive contribution from two different admissible sets (since if $n$ makes a positive contribution for some admissible set, the $L_i(n)$ are uniquely determined as the integers in $[qn,qn+\eta\log{x}]$ with no prime factors $p\nmid B$ less than $x^\rho$). By \eqref{eq:WnBnd}, we see that if $n$ makes a positive contribution then $w_n\ll (\log{x})^{2k}\exp(O(k/\rho))$, with the implied bound uniform in $\mathcal{L}(\mathbf{b})$.

As before, we choose $\rho=c_0k^{-3}(\log{k})^{-1}$, which makes the contribution of the third of the terms in parentheses small compared to the second one. Following the argument of the proof of Theorem \ref{thrm:MainTheorem}, using $\mathcal{S}(\rho;B)$ in place of $\mathcal{S}(\theta/10;1)$ increases the size of the contribution of the final term by a factor $O(\rho^{-1})=O( k^3\log{k})$. Thus to show the final term is suitably small, we take $\eta\le \epsilon$ to be a small multiple of $k^{-4}(\log{k})^{-2}$ instead of $1/(k\log{k})$ (which is acceptable for Proposition \ref{prpstn:MainProp}). With these choices, we find that for a suitable constant $c_1$ we have
\begin{equation}
S''\ge (\theta/3)^k\frac{B^k}{\phi(B)^k}\mathfrak{S}_B(\mathcal{L})\#\mathcal{A}(x)(\log{x})^k I_k\sum_{\substack{b_1<\dots< b_k\\ qb_k\le \eta\log{x}\\ \mathcal{L}(\mathbf{b})\text{ admissible}}}\Bigl(3c_1\log{k}-2m\Bigr).
\end{equation}
Therefore, given $m\in\mathbb{N}$ we choose $k=\lceil\exp(m/c_1)\rceil$. With this choice we see that $S''>0$. Using the bounds $I_k\gg (k\log{k})^{-k}$ and $\mathfrak{S}_B(\mathcal{L})\gg \exp(-Ck)$ from Proposition \ref{prpstn:MainProp} and $B^k/\phi(B)^k\ge 1$ we see that for a suitable constant $C_2$ we have
\begin{equation}
S''\gg x(\log{x})^k\exp(-C_2 k^2)\sum_{\substack{b_1<\dots< b_k\\ qb_k\le \eta\log{x}\\ \mathcal{L}\text{ admissible}}}1.
\label{eq:SppBound}
\end{equation}
Thus we are left to obtain a lower bound for the inner sum of \eqref{eq:SppBound}. We see all the $b_i$ lie between 0 and $\eta(\log{x})/q$. We greedily sieve this interval by removing for each prime $p\le k$ in turn any elements from the residue class modulo $p$ which contains the fewest elements. The resulting set has size at least
\begin{equation}
\frac{\eta\log{x}}{q}\prod_{p\le k}\Bigl(1-\frac{1}{p}\Bigr)\gg \frac{\log{x}}{q k^4(\log{k})^3}.\label{eq:IntervalLength}
\end{equation}
Any choice of $k$ distinct $b_i$ from this set will the cause the resulting $\mathcal{L}(\mathbf{b})$ to be admissible. We now recall from the theorem that we are only considering $q\le (\log{x})^{1-\epsilon}$ and $m\le c_\epsilon\log\log{x}$. For a suitably small choice of $c_\epsilon$, we see that $k=\lceil\exp(m/c_1)\rceil\le (\log{x})^{\epsilon/10}$. Therefore from \eqref{eq:IntervalLength} we see the length of the interval is at least $k^2$ if $x$ is sufficiently large in terms of $\epsilon$. In this case, we obtain the bound
\begin{equation}
\sum_{\substack{b_1< \dots< b_k\\ qb_k\le \eta\log{x}\\ \mathcal{L}\text{ admissible}}}1\ge k^{-k}\Bigl(\frac{c_3\log{x}}{qk^4\log^3{k}}-k\Bigr)^k\gg \Bigl(\frac{\log{x}}{q}\Bigr)^k\exp(-C_4 k^2),\label{eq:AdmissibleBound}
\end{equation} 
for some constants $c_3,C_4>0$. Thus, substituting \eqref{eq:AdmissibleBound} into \eqref{eq:SppBound} we obtain
\begin{equation}
S''\gg x(\log{x})^{2k}\exp(-C_5 k^2)q^{-k}.
\end{equation}
We recall that every pair $(n,\mathcal{L})$ for which $n$ makes a positive contribution to $S''$ when considering $\mathcal{L}$ is counted with weight at most $kw_n(\mathcal{L})\ll k(\log{x})^{2k}\exp(O(k/\rho))$ (uniformly over all choices of $\mathcal{L}$). Putting this all together, we obtain the number $N$ of integers $n$ with $x\le n\le 2x$ such that there are $\gg \log{k}$ consecutive primes all congruent to $a\pmod{q}$ in the interval $[qn,qn+\eta\log{x}]$ satisfies
\begin{equation}
N\gg \frac{x}{q^k\exp(C_6 k^5)}.
\end{equation}
We see that the initial prime in each such interval is counted by at most $\log{x}$ values of $n$. Therefore, changing the count to be over the initial prime, recalling $k=\lceil\exp(m/c_1)\rceil$, recalling that $\eta\le \epsilon$, and replacing $x$ with $x/3q$ gives
\begin{equation}
\#\{p_n\le x:p_n\equiv \dots \equiv p_{n+m}\equiv a \pmod{q}, p_{n+m}-p_n\le\epsilon\log{x}\}\gg_\epsilon \frac{\pi(x)}{(2q)^{\exp(Cm)}},
\end{equation}
for a suitable constant $C>0$, as required.
\end{proof}
\begin{proof}[Proof of Theorem \ref{thrm:ShortIntervals}]
We take $\mathcal{P}=\mathbb{P}$, $\mathcal{A}=[x,x+y]$, $B=1$, $\theta=1/30-\epsilon$. Given $m$, we choose $k=\exp(Cm)$ for some suitable constant $C>0$.

Timofeev \cite{Timofeev} (improving earlier work of Huxley and Iwaniec \cite{HuxleyIwaniec} and Perelli, Pintz and Salerno \cite{PerelliPintzSalerno}) has shown that, for $\theta=1/30-\epsilon/2$, for any $x^{7/12+\epsilon/2}\le y\le x$ and any fixed $C'>0$ we have
\begin{equation}
\sum_{q<x^{\theta}}\sup_{(a,q)=1}\Bigl|\pi(x+y;q,a)-\pi(x;q,a)-\frac{\pi(x+y)-\pi(x)}{\phi(q)}\Bigr|\ll _{C',\epsilon} \frac{y}{(\log{x})^{C'}}.
\label{eq:BVShortIntervals}
\end{equation}
By taking $C'$ sufficiently large in terms of $k$, we see that \eqref{eq:BVShortIntervals} implies Hypothesis \ref{hypthss:Weak} holds for our choice of $\theta=1/30-\epsilon$ provided $x$ is sufficiently large in terms of $m$ and $\epsilon$. Theorem \ref{thrm:MainTheorem} then automatically gives Theorem \ref{thrm:ShortIntervals}.
\end{proof}
\begin{proof}[Proof of Theorem \ref{thrm:Chebotarev}]
We take $\mathcal{A}=\mathbb{N}$, $B=\Delta_K$ and $\mathcal{P},\mathcal{L}$ the sets given by the statement of the theorem. To avoid confusion, we note that $\Delta_K$ here is the discriminant of $K/\mathbb{Q}$, and unrelated to $\Delta_L$ from Proposition \ref{prpstn:MainProp}. Murty and Murty \cite{Murty} have then established the key estimate $(2)$ of Hypothesis \ref{hypthss:Weak} with any $\theta<\min(1/2,2/\#G)$, where $G=Gal(K/\mathbb{Q})$ (the other estimates being trivial). Finally, we have
\begin{equation}
\frac{1}{k}\frac{B}{\phi(B)}\sum_{i=1}^k\frac{\phi(a_i)}{a_i}\#\mathcal{P}_{L_i,\mathcal{A}}(x)\ge (1+o(1))\frac{\Delta_K\#\mathcal{C}}{\phi(\Delta_K)\#G}\frac{x}{\log{x}},
\end{equation}
and so for $x$ sufficiently large, we may take $\delta$ to be a constant depending only on $K$. The result now follows directly from Theorem \ref{thrm:MainTheorem}.
\end{proof}
\section{Initial Considerations}
\label{sctn:InitialConsiderations}
We recall that we are given a set $\mathcal{A}$ of integers, a set $\mathcal{P}$ of primes, an admissible set $\mathcal{L}=\{L_1,\dots,L_k\}$ of integer linear functions, an integer $B$ and quantities $R,\,x$. We assume that the coefficients of $L_i(n)=a_in+b_i\in\mathcal{L}$ satisfy $|a_i|,|b_i|\le x^\alpha$, $a_i\ne 0$, and $k=\#\mathcal{L}$ is sufficiently large in terms of the fixed quantites $\theta,\alpha$ and satisfies $k\le (\log{x})^{1/5}$. $B,R$ satisfy $1\le B\le x^\alpha$, and $x^{\theta/10}\le R\le x^{\theta/3}$. Finally, we assume from now on that the set $\mathcal{A}$ satisfies
\begin{equation}
\sum_{q\le x^{\theta}}\max_{a}\Bigl|\#\mathcal{A}(x;q,a)-\frac{\#\mathcal{A}(x)}{q}\Bigr|\ll \frac{\#\mathcal{A}(x)}{(\log{x})^{100k^2}},\label{eq:Bdd1}
\end{equation}
and
\begin{equation}
\#\mathcal{A}(x;q,a)\ll \frac{\#\mathcal{A}(x)}{q}\label{eq:Bdd2}
\end{equation}
for any $q<x^\theta$. Together these assumptions are a slight generalization of the assumptions of Proposition \ref{prpstn:MainProp}.

We define the multiplicative functions $\omega=\omega_\mathcal{L}$ and $\phi_\omega=\phi_{\omega,\mathcal{L}}$ and the singular series $\mathfrak{S}_D(\mathcal{L})$ for an integer $D$ by
\begin{align}
\omega(p)&=\begin{cases}
\#\{1\le n\le p: \prod_{i=1}^k L_i(n)\equiv 0\pmod{p}\},\qquad &p\nmid B,\\
0,&p|B,
\end{cases}\\
\phi_\omega(d)&=\prod_{p|d}(p-\omega(p)),\\
\mathfrak{S}_D(\mathcal{L})&=\prod_{p\nmid D}\Bigl(1-\frac{\omega(p)}{p}\Bigr)\Bigl(1-\frac{1}{p}\Bigr)^{-k}.\label{eq:SingularSeriesDef}
\end{align}
Since $\mathcal{L}$ is admissible, we have $\omega(p)<p$ for all $p$ and so $\phi_\omega(n)>0$ and $\mathfrak{S}_D(\mathcal{L})> 0$ for any integer $D$. Since $\omega(p)=k$ for all $p\nmid \prod_{i=1}^ka_i\prod_{i\ne j}(a_ib_j-b_ia_j)$ we see the product $\mathfrak{S}_D(\mathcal{L})$ converges. 

The main innovation in \cite{Maynard} was a different choice of the sieve weights used in the GPY method to detect small gaps between primes. In order to adapt the argument of \cite{Maynard} to the more general situation considered here, we need to modify the choice of these weights further to produce a choice more amenable to obtaining uniform estimates. In particular, in \cite{Maynard} the `W-trick' was used to eliminate the need for consideration of the singular series which would naturally arise. In our situation, however, in order to obtain suitable uniform estimates without stronger assumptions on the error terms in Hypothesis \ref{hypthss:Weak}, we need to take these singular series into account.

We will consider sieve weights $w_n=w_n(\mathcal{L})$, which are defined to be 0 if $\prod_{i=1}^kL_i(n)$ is a multiple of any prime $p\le 2k^2$ with $p\nmid B$. We let $W=\prod_{p\le 2k^2,p\nmid B}p$. If $(L_i(n),W)=1$ for all $1\le i\le k$ we have
\begin{equation}
w_n=\Bigl(\sum_{d_i|L_i(n)\forall i}\lambda_{\mathbf{d}}\Bigr)^2,
\label{eq:WnDef}
\end{equation}
for some real variables $\lambda_{\mathbf{d}}$ depending on $\mathbf{d}=(d_1,\dots,d_k)$. We first restrict our $\lambda_{\mathbf{d}}$ to be supported on $\mathbf{d}$ with $d=\prod_{i=1}^k d_i$ square-free and coprime to $WB$.

Given a prime $p\nmid WB$, let $1\le r_{p,1}<\dots< r_{p,\omega(p)}\le p$ be the $\omega(p)$ residue classes for which $\prod_{i=1}^k L_i(n)$ vanishes modulo $p$. For each such prime $p$, we fix a choice of indices $j_{p,1},\dots,j_{p,\omega(p)}\in\{1,\dots,k\}$ such that $j_{p,i}$ is the smallest index such that
\begin{equation}
L_{j_{p,i}}(r_{p,i})\equiv0\pmod{p}
\end{equation}
for each $i\in\{1,\dots,\omega(p)\}$. (We could choose any index satisfying the above condition; we choose the smallest index purely for concreteness.) All the functions $L_i$ are linear and, since $\mathcal{L}$ is admissible, none of the $L_i$ are a multiple of $p$. This means that for any $L\in\mathcal{L}$ there is at most one residue class for which $L$ vanishes modulo $p$. Thus the indices $j_{p,1},\dots, j_{p,\omega(p)}$ we have chosen must be distinct. We now restrict the support of $\lambda_{\mathbf{d}}$ to $(d_j,p)=1$ for all $j\notin\{j_{p,1},\dots,j_{p,\omega(p)}\}$.

We see these restrictions are equivalent to the restriction that the support of $\lambda_{\mathbf{d}}$ must lie the set
\begin{equation}
\mathcal{D}_k=\mathcal{D}_k(\mathcal{L})=\{\mathbf{d}\in\mathbb{N}^k:\mu^2(d)=1, (d_j,W_j)=1\forall j\},
\label{eq:DkDef}
\end{equation}
where $W_j$ are square-free integers each a multiple of $WB$, and any prime $p\nmid WB$ divides exactly $k-\omega(p)$ of the $W_j$ (such $p|W_j$ if $j\notin\{j_{p,1},\dots,j_{p,\omega(p)}\}$). We recall that in our notation $\mu^2(d)=\mu^2(\prod_{i=1}^kd_i)$.

The key point of these restrictions is so that different components of different $\mathbf{d}$ occurring in our sieve weights will be relatively prime. Indeed, let $\mathbf{d}$ and $\mathbf{e}$ both occur in the sum \eqref{eq:WnDef}. If $p|d_i$ then $p|L_i(n)$, and so $i$ must be the chosen index for the residue class $n$ $\pmod{p}$. But if we also have $p|e_j$ then similarly $j$ must be the chosen index for this residue class, and so we must have $i=j$. Hence $(d_i,e_j)=1$ for all $i\ne j$.

Similar to \cite{Maynard}, we define $\lambda_{\mathbf{d}}$ in terms of variables $y_{\mathbf{r}}$ supported on $\mathbf{r}\in\mathcal{D}_k$ by
\begin{equation}
\lambda_{\mathbf{d}}=\mu(d)d\sum_{\mathbf{d}|\mathbf{r}}\frac{y_{\mathbf{r}}}{\phi_\omega(r)},\qquad y_{\mathbf{r}}=\frac{\mathbf{1}_{\mathcal{D}_k}(\mathbf{r})W^kB^k}{\phi(W B)^k}\mathfrak{S}_{WB}(\mathcal{L})F\Bigl(\frac{\log{r_1}}{\log{R}},\dots,\frac{\log{r_k}}{\log{R}}\Bigr),\label{eq:YDef}
\end{equation}
(again, we recall $d=\prod_{i=1}^kd_i$) where $x^{\theta/10}\le R\le x^{\theta/3}$ and $F:\mathbb{R}^k\rightarrow\mathbb{R}$ is a smooth function given by
\begin{equation}
F(t_1,\dots,t_k)=\psi\Bigl(\sum_{i=1}^kt_i\Bigr)\prod_{i=1}^k\frac{\psi(t_i/U_k)}{1+T_kt_i},\qquad T_k=k\log{k},\qquad U_k=k^{-1/2}.\label{eq:FDef}
\end{equation}
Here $\psi:[0,\infty)\rightarrow[0,1]$ is a fixed smooth non-increasing function supported on $[0,1]$ which is $1$ on $[0,9/10]$. In particular, we note that this choice of $F$ is non-negative, and that the support of $\psi$ implies that
\begin{equation}
\lambda_{\mathbf{d}}=0\quad\text{if $\textstyle d=\prod_{i=1}^kd_i>R$.}
\end{equation}
We will find it useful to also consider the closely related functions $F_1$ and $F_2$ which will appear in our error estimates, defined by
\begin{equation}
 F_1 (t_1,\dots,t_k)=\prod_{i=1}^k \frac{\psi(t_i/U_k)}{1+T_kt_i},\qquad 
 F_2 (t_1,\dots,t_k)=\sum_{1\le j\le k}\Bigl(\frac{\psi(t_j/2)}{1+T_kt_j}\prod_{\substack{1\le i\le k\\ i\ne j}}\frac{\psi(t_i/U_k)}{1+T_kt_i}\Bigr).
\label{eq:F1F2Def}
\end{equation}
Finally, by Moebius inversion, we see that \eqref{eq:YDef} implies that for $\mathbf{r}\in\mathcal{D}_k$
\begin{equation}
y_{\mathbf{r}}=\mu(r)\phi_\omega(r)\sum_{\mathbf{r}|\mathbf{f}}\frac{y_{\mathbf{f}}}{\phi_\omega(f)}\sum_{\substack{\mathbf{d}\\\mathbf{r}|\mathbf{d},\mathbf{d}|\mathbf{f}}}\mu(d)=\mu(r)\phi_\omega(r)\sum_{\mathbf{r}|\mathbf{d}}\frac{\lambda_{\mathbf{d}}}{d}.
\end{equation}
\section{Preparatory Lemmas}
\begin{lmm}\label{lmm:SingularSeries}
(i) There is a constant $C$, such that for any admissible set $\mathcal{L}$ of size $k$ we have
\[\mathfrak{S}_B(\mathcal{L})\ge \exp(-Ck).\]
(ii) Let all functions $L_i\in\mathcal{L}$ be of the form $L_i=an+b_i$, for some integers $|a|\ll 1$ and $|b_i| \ll \log{x}$. Let $\Delta_L=|a|^{k+1}\prod_{i=1}^k |b_i-b|$ and $\eta\ge (\log{x})^{-9/10}$. Then we have
\[\sum_{\substack{|b|\le \eta\log{x}\\ L(n)=an+b\notin\mathcal{L}}}\frac{\Delta_L}{\phi(\Delta_L)}\ll \eta(\log{x})(\log{k}).\]
\end{lmm}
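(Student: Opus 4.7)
The plan is to prove the two parts separately, using very different methods: prime-by-prime estimation with admissibility for (i), and a Dirichlet-series / Selberg-type average for (ii).

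\textbf{Proof plan for (i).} I would take logarithms and split primes into the ranges $p \le k$ and $p > k$:
\[
\log \mathfrak{S}_B(\mathcal{L}) = \sum_{\substack{p \nmid B \\ p \le k}} \bigl[\log(1-\omega(p)/p) - k\log(1-1/p)\bigr] + \sum_{\substack{p \nmid B \\ p > k}} \bigl[\cdots\bigr].
\]
For $p \le k$ I would use admissibility, which guarantees $\omega(p) \le p-1$, so $1 - \omega(p)/p \ge 1/p$. Together with $(1-1/p)^{-k} \ge 1$, each factor is at least $1/p$, and summing $-\log p$ over $p \le k$ gives $-\vartheta(k) = -k + o(k)$ by the prime number theorem. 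For $p > k$ I would use $\omega(p) \le k$ and Taylor-expand: the leading contribution is $(k-\omega(p))/p \ge 0$, and the remainder is bounded by $O(k^2/p^2)$. Since $\sum_{p > k} k^2/p^2 \ll k^2 \cdot (1/k) = k$, the tail contributes $\ge -Ck$. Combining the two ranges gives $\log \mathfrak{S}_B(\mathcal{L}) \ge -Ck$.

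\textbf{Proof plan for (ii).} I would use the identity $n/\phi(n) = \sum_{d \mid n,\, \mu^2(d)=1} 1/\phi(d)$, switch the order of summation, and exploit multiplicativity. Writing $d = d_a d'$ with $d_a \mid a$ squarefree and $(d',a)=1$, the condition $d' \mid \prod_i(b_i-b)$ is equivalent, by CRT and the squarefree structure, to requiring $b \equiv b_j \pmod{p}$ for some $j$ at each prime $p \mid d'$. This defines a union of $\prod_{p\mid d'} k_p$ residue classes mod $d'$, where $k_p = \#\{b_i \bmod p\} \le \min(k,p-1)$. Thus the count of valid $b$ in $[-\eta\log x,\eta\log x]$ is at most $\prod_{p\mid d'}k_p \cdot (2\eta\log x/d' + 1)$. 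The main term becomes
\[
2\eta\log x \cdot \frac{a}{\phi(a)} \prod_{p \nmid a}\Bigl(1 + \frac{k_p}{p\,\phi(p)}\Bigr).
\]
I would then split the Euler product at $p = k$. For $p \le k$, $k_p \le p-1$ gives factors bounded by $p/(p-1)$, and Mertens' theorem yields $\prod_{p\le k} p/(p-1) \asymp \log k$. For $p > k$, $k_p \le k$ gives factors $1 + O(k/p^2)$, and $\sum_{p>k} k/p^2 = O(1)$, so this tail contributes $O(1)$. Multiplying gives the main term $\ll \eta(\log x)(\log k)$.

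\textbf{Main obstacle.} The delicate step is controlling the ``$+1$'' error from the residue-class count in the expansion of $N(d)$. Here I would exploit that $d'$ must actually divide $\prod_i |b_i-b|$ for some admissible $b$: since $|b_i - b| \ll \log x$ and $b_i \ne b$ (as $L \notin \mathcal{L}$), the primes of $d'$ satisfy $p \le 2\log x$ and $d' \le (2\log x)^k$. Splitting into the cases $d' \le 2\eta \log x$ (absorbed by the main term with a factor of $2$) and $d' > 2\eta\log x$ (where the interval contains at most one element of each residue class, so the count is additionally bounded by $2\eta\log x + 1$), and using $k_p/(p-1) \le k/(p-1) < 1$ for $p > k$ to establish convergence of the relevant Dirichlet series over squarefree $d'$ with prime factors $> k$, one obtains a contribution also $\ll \eta(\log x)(\log k)$. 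The lower bound $\eta \ge (\log x)^{-9/10}$ ensures that the interval is long enough for these boundary errors not to overwhelm the main term.
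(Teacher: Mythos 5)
Your plan for part (i) is essentially the paper's argument: split primes at $p = k$, use admissibility ($\omega(p)\le p-1$) for $p\le k$ to bound each factor below by $1/p$, and use $\omega(p)\le k$ together with $\sum_{p>k}k^2/p^2\ll k$ for the tail. That part is fine and matches the paper.

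For part (ii), your main-term computation (expand $\Delta_L/\phi(\Delta_L)=\sum_{d\mid\Delta_L}\mu^2(d)/\phi(d)$, swap, count residue classes mod $d$ via CRT, truncate at $d\ll\eta\log x$ so that the ``$+1$'' is absorbed, and evaluate the Euler product $\prod_{p\le k}(1+O(1/p))\prod_{p>k}(1+O(k/p^2))\ll\log k$) is the same as the paper's. The gap is in your handling of the tail $d'>2\eta\log x$, and it is a real one, not just a matter of detail. You write ``using $k_p/(p-1)<1$ for $p>k$ to establish convergence of the relevant Dirichlet series over squarefree $d'$ with prime factors $>k$,'' but the dangerous $d'$ are precisely those whose prime factors are $\le k$. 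For such $d'$ one has $\prod_{p\mid d'}k_p$ as large as $\phi(d')$, so the tail sum $\sum_{d'>2\eta\log x}\mu^2(d')\prod_{p\mid d'}k_p/\phi(d')$ behaves like the count of squarefree integers built from primes $\le k$, of order $2^{\pi(k)}$. Since the lemma must hold uniformly for $k$ as large as $(\log x)^{1/5}$, while $\eta\log x$ can be as small as $(\log x)^{1/10}$, this term is not $\ll\eta(\log x)(\log k)$ in general, and the trivial cap $2\eta\log x+1$ per $d'$ does not repair it (you would still be summing that cap over exponentially many $d'$).

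The paper avoids this by a case split you have not included. When $k\gg\log\log x$, it abandons the divisor-expansion entirely and uses the pointwise bound $\Delta_L/\phi(\Delta_L)\ll\log\log\Delta_L\ll\log k$ for each $b$, which already gives $\ll\eta(\log x)(\log k)$ after summing over $|b|\le\eta\log x$. Only for $k\ll\log\log x$ does it run the divisor argument, and there it controls the tail $d>\eta\log x$ via the weight $\sum_{p\mid d}\log p/\log(\eta\log x)\ge 1$ (for squarefree $d$ exceeding $\eta\log x$), which after swapping yields a per-$b$ contribution $\ll(\log\log\Delta_L)^2/\log(\eta\log x)$. Because $k\ll\log\log x$ forces $\log\log\Delta_L\ll\log\log\log x$ and because $\eta\ge(\log x)^{-9/10}$ forces $\log(\eta\log x)\gg\log\log x$, this tail is $o(\eta\log x)$. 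Without the dichotomy on $k$ the tail estimate fails, so your proposal needs that additional ingredient to be correct.
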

\begin{proof}
Since $\omega(p)\le \min(k,p-1)$ for any admissible $\mathcal{L}$ of size $k$, we have
\begin{align}
\mathfrak{S}_B(\mathcal{L})=\prod_{p\nmid B}\Bigl(1-\frac{\omega(p)}{p}\Bigr)\Bigl(1-\frac{1}{p}\Bigr)^{-k}\ge \prod_{p\le k, p\nmid B}\frac{1}{p}\prod_{p> k,p\nmid B}\Bigl(1-\frac{k}{p}\Bigr)\Bigl(1-\frac{1}{p}\Bigr)^{-k}.
\end{align}
Since all terms in the products on the right hand side are less than 1, we can drop the restriction $p\nmid B$ for a lower bound. This gives
\begin{equation}
\mathfrak{S}_B(\mathcal{L})\ge \prod_{p\le k}\frac{1}{p}\prod_{p> k}\Bigl(1+O(k^2/p^2)\Bigr)\ge \exp(-Ck).
\end{equation}
We now consider the second statement. We have $L_i(n)=an+b_i$ with $|b_i|\ll \log{x}$, and consider $L=an+b\notin\mathcal{L}$ with $|b|\le \eta\log{x}$. 
If $k\gg \log\log{x}$ then we use 
the bound $\Delta_L/\phi(\Delta_L)\ll\log\log{\Delta_L}\ll \log{k}$ 
to give
\begin{equation}
\sum_{\substack{|b|\le \eta\log{x}\\ L=an+b\notin\mathcal{L}}}\frac{\Delta_L}{\phi(\Delta_L)}\ll \eta(\log{k})(\log{x}).\label{eq:KLarge}
\end{equation}
We now establish \eqref{eq:KLarge} in the case $k\ll\log\log{x}$. Using the identity $e/\phi(e)=\sum_{d|e}\mu^2(d)/\phi(d)$, and splitting the terms depending on the size of divisors, we have
\begin{align}
\sum_{\substack{|b|\le \eta\log{x}\\ L=an+b\notin\mathcal{L}}}\frac{\Delta_L}{\phi(\Delta_L)}&=\frac{a}{\phi(a)} 
\sum_{\substack{|b|\le \eta\log{x}\\ L=an+b\notin\mathcal{L}}}\sum_{\substack{d|\Delta_L\\ (d,a)=1}}\frac{\mu^2(d)}{\phi(d)}\nonumber\\
&\ll 
\sum_{\substack{|b|\le \eta\log{x}\\ L=an+b\notin\mathcal{L}}}\Bigl(\sum_{\substack{1\le d\le \eta\log{x}\\ d|\Delta_L,(d,a)=1}}\frac{\mu^2(d)}{\phi(d)}+\sum_{\substack{d>\eta\log{x}\\ d|\Delta_L}}\frac{\mu^2(d)\sum_{p|d}\log{p}}{\phi(d)\log(\eta\log{x})}\Bigr)\nonumber\\
&\ll  
\sum_{\substack{1\le d\le \eta\log{x}\\(d,a)=1}}\frac{\mu^2(d)}{\phi(d)}\sum_{\substack{|b|\le \eta\log{x}\\ L=an+b\notin\mathcal{L}\\ d|\Delta_L}}1+
\sum_{\substack{|b|\le \eta\log{x}\\ L=an+b\notin\mathcal{L}}}\sum_{p|\Delta_L}\frac{\log{p}}{p\log(\eta\log{x})}\frac{\Delta_L}{\phi(\Delta_L)}.\label{eq:SingularSum}
\end{align}
We first consider the second term on the right hand side of \eqref{eq:SingularSum}. We have $\sum_{p|\Delta_L}p^{-1}\log{p}\ll\log\log{\Delta_L}$ and $\Delta_L/\phi(\Delta_L)\ll \log\log{\Delta_L}$. But we are only we only considering $k\ll\log\log{x}$ and $\eta\ge(\log{x})^{-9/10}$, and so $(\log\log{\Delta_L})^2\ll (\log\log\log{x)^2}=o(\log(\eta\log{x}))$. Therefore we see that the total contribution from the second term in \eqref{eq:SingularSum} is $o(\eta
\log{x})$.

We now consider the first sum in \eqref{eq:SingularSum}. For every prime $p|d$, there are at most $k$ choices for the residue class $b\pmod{p}$ such that $p|\Delta_L$, and trivially there are also at most $p$ choices. Thus the inner sum can be written as $\prod_{p|d}\min(p,k)$ sums over $b$ in a fixed arithmetic progression modulo $d$. For each such sum there are $\ll \eta(\log{x})/d$ possible values of $b$. Thus we have
\begin{align}
\sum_{\substack{1\le d\le \eta\log{x}\\ (d,a)=1}}\frac{\mu^2(d)}{\phi(d)}\sum_{\substack{|b|\le \eta\log{x}\\ L=an+b\notin\mathcal{L}\\ d|\Delta_L}}1&\ll \sum_{d\le \eta\log{x}}\frac{\mu^2(d)\prod_{p|d}\min(p,k)}{\phi(d)}\Bigl(\frac{\eta\log{x}}{d}\Bigr)\nonumber\\
&\ll \eta\log{x}\prod_{p\le k}\Bigl(1+\frac{1}{p-1}\Bigr)\prod_{p>k}\Bigl(1+\frac{k}{p(p-1)}\Bigr)\nonumber\\
&\ll \eta(\log{x})(\log{k}).\label{eq:KSmall}
\end{align}
This gives the result.
\end{proof}
\begin{lmm}\label{lmm:YDifference}
Let
\[Y_\mathbf{r}=\frac{W^kB^k\mathfrak{S}_{WB}(\mathcal{L})}{\phi(W B)^{k}} F_2 \Bigl(\frac{\log{r_1}}{\log{R}},\dots,\frac{\log{r_k}}{\log{R}}\Bigr),\]
where $F_2$ is given by \eqref{eq:F1F2Def}. Then

(i) Let $\mathbf{r},\mathbf{s}\in\mathcal{D}_k$ with $s_i=r_i$ for all $i\ne j$, and $s_j=Ar_j$ for some $A\in\mathbb{N}$. Then
\[y_{\mathbf{s}}=y_{\mathbf{r}}+O\Bigl(T_k Y_{\mathbf{r}}\frac{\log{A}}{\log{R}}\Bigr).\]
 (ii) Let $\mathbf{r},\mathbf{s}\in\mathcal{D}_k$ with $r=s$ and let $A$ be the product of primes dividing $r$ but not $(\mathbf{r},\mathbf{s})$. Then
\[y_{\mathbf{s}}=y_{\mathbf{r}}+O\Bigl(T_k (Y_{\mathbf{r}}+Y_{\mathbf{s}})\frac{\log{A}}{\log{R}}\Bigr).\]
\end{lmm}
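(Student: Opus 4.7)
The plan is to derive both parts from the pointwise bound $|\partial F/\partial t_j(\mathbf{t})|\ll T_k F_2(\mathbf{t})$ on the support of $F$, together with the mean value theorem. To prove the derivative bound, I differentiate $F(\mathbf{t})=\psi(\sum_i t_i)\prod_i\psi(t_i/U_k)/(1+T_k t_i)$ in $t_j$. The contribution from differentiating $\psi(\sum_i t_i)$ is bounded by $\|\psi'\|_\infty$ times the $j$-th summand of $F_2$, using that $\psi(t_j/2)=1$ throughout the support (since $t_j\le U_k\le 1$). The contribution from differentiating the $j$-th factor $\psi(t_j/U_k)/(1+T_k t_j)$ is dominated by the $T_k/(1+T_k t_j)^2$ piece, because $T_k U_k=k^{1/2}\log k\gg 1$ renders the competing $\psi'(t_j/U_k)/U_k$ piece strictly smaller; this again gives $T_k$ times the $j$-th summand of $F_2$.

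For part (i), the vectors $\mathbf{r}$ and $\mathbf{s}$ agree in every coordinate except the $j$-th (which differs by $\log A/\log R$), so the common prefactor $W^kB^k\mathfrak{S}_{WB}(\mathcal{L})/\phi(WB)^k$ cancels in $y_\mathbf{s}-y_\mathbf{r}$, and the one-variable mean value theorem combined with the derivative estimate gives $|y_\mathbf{s}-y_\mathbf{r}|\ll T_k(\log A/\log R)\cdot(\text{prefactor})\cdot F_2(\mathbf{t}^*)$ for some $\mathbf{t}^*$ on the segment from $\mathbf{t}^\mathbf{r}$ to $\mathbf{t}^\mathbf{s}$. Inspection shows every summand of $F_2$ is non-increasing in $t_j$: the $j$-th summand's $t_j$-dependence is only through $1/(1+T_k t_j)$, while every other summand contains the non-increasing factor $\psi(t_j/U_k)/(1+T_k t_j)$. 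Since $t_j^\mathbf{s}\ge t_j^\mathbf{r}$, this gives $F_2(\mathbf{t}^*)\le F_2(\mathbf{t}^\mathbf{r})$ and the stated bound in terms of $Y_\mathbf{r}$ follows.

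For part (ii), I would telescope along a path $\mathbf{r}=\mathbf{r}^{(0)}\to\mathbf{r}^{(1)}\to\cdots\to\mathbf{r}^{(m)}=\mathbf{s}$ in $\mathcal{D}_k$, where $\mathbf{r}^{(\ell)}$ is obtained from $\mathbf{r}^{(\ell-1)}$ by moving the $\ell$-th prime of $A$ from its position $i_\ell$ in $\mathbf{r}$ to its position $j_\ell$ in $\mathbf{s}$. Each $\mathbf{r}^{(\ell)}$ remains in $\mathcal{D}_k$, since squarefreeness is preserved and the $(d_{j_\ell},W_{j_\ell})=1$ condition at the destination is inherited from $\mathbf{s}\in\mathcal{D}_k$. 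Each step changes two coordinates by $\pm\log p_\ell/\log R$, and I would split the change into two one-coordinate moves via an auxiliary point (add $p_\ell$ to position $j_\ell$ first, then remove it from position $i_\ell$) and apply part (i) to each piece. Using the coordinatewise monotonicity of $F_2$ to bound the two intermediate $F_2$-values by those at $\mathbf{r}^{(\ell-1)}$ and $\mathbf{r}^{(\ell)}$ respectively yields the per-step bound $|y_{\mathbf{r}^{(\ell)}}-y_{\mathbf{r}^{(\ell-1)}}|\ll T_k(\log p_\ell/\log R)(Y_{\mathbf{r}^{(\ell-1)}}+Y_{\mathbf{r}^{(\ell)}})$.

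Summing this per-step estimate and using $\sum_\ell \log p_\ell=\log A$, the lemma reduces to showing $Y_{\mathbf{r}^{(\ell)}}\ll Y_\mathbf{r}+Y_\mathbf{s}$ uniformly in $\ell$. This is the main obstacle, because the interior $\mathbf{r}^{(\ell)}$ need not agree with either endpoint on any coordinate, so $F_2(\mathbf{t}^{(\ell)})$ is not directly comparable to the endpoint values. I would handle this by exploiting log-convexity of each summand of $F_2$: each summand is a product of factors of the form $\psi(\cdot)/(1+T_k\cdot)$ which are log-convex in the relevant variable (convexity of $-\log(1+T_k t)$ together with the shape of $\psi$ on its transition interval), so along the straight log-coordinate segment from $\mathbf{t}^\mathbf{r}$ to $\mathbf{t}^\mathbf{s}$ each summand is dominated by the sum of its two endpoint values, yielding the claimed bound after collapsing the telescoping sum.
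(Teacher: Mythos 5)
Your part (i) is correct and is in substance the paper's own argument: the paper perturbs each factor of $F$ via $\frac{1}{1+T_ku}=\frac{1+O(T_k\epsilon)}{1+T_kv}$ and $\psi(u)=\psi(v)+O(\epsilon)$, which is exactly your derivative bound $|\partial F/\partial t_j|\ll T_kF_2$ packaged differently, and the passage from $F_2(\mathbf{t}^*)$ to $F_2(\mathbf{t}^{\mathbf{r}})$ via coordinatewise monotonicity is the same observation the paper uses implicitly.

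Part (ii) has a genuine gap, which you have in fact located yourself: everything hinges on $Y_{\mathbf{r}^{(\ell)}}\ll Y_{\mathbf{r}}+Y_{\mathbf{s}}$, and your proposed justification does not establish it. The intermediate configurations $\mathbf{t}^{(\ell)}$ are vertices of a box --- each coordinate is independently in its $\mathbf{r}$-state, its $\mathbf{s}$-state, or partially moved --- and they do \emph{not} lie on the straight segment from $\mathbf{t}^{\mathbf{r}}$ to $\mathbf{t}^{\mathbf{s}}$, so log-convexity along that segment is irrelevant. Worse, the needed comparison is false: since $\prod_i(1+T_kt_i)\ge 1+T_k\sum_it_i$ with near-equality only when the mass $\sum_it_i$ (which is conserved along your path, as $r^{(\ell)}=r$) sits in a single coordinate, an intermediate point at which mass has concentrated can have $F_2$ larger than at both endpoints. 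Concretely, take $\log r_1/\log R=\log r_2/\log R=0.4U_k$, $r_i=1$ for $i\ge3$, $s_1=r_2$, $s_2=r_1$, and move all primes of $r_2$ into position $1$ before moving any prime of $r_1$ out: the intermediate point $(0.8U_k,0,\dots,0)$ has $F_2\asymp k^{1/2}/\log k$ while both endpoints have $F_2\asymp(\log k)^{-2}$, an inflation by $\asymp T_kU_k=k^{1/2}\log{k}$, which is fatal to the stated bound. (A secondary issue: your auxiliary midpoints, where $p_\ell$ occupies two coordinates simultaneously, are not in $\mathcal{D}_k$, so $y$ vanishes there by \eqref{eq:YDef} and part (i) cannot be invoked verbatim; one must work with the smooth extension of $y$ without the indicator.) The paper's route avoids all of this by moving \emph{up} rather than sideways: set $t_i=[r_i,s_i]$ and apply part (i) coordinate by coordinate along the two paths $\mathbf{s}\to\mathbf{t}$ and $\mathbf{r}\to\mathbf{t}$. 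Every coordinate only increases along each path, so the coordinatewise monotonicity of $F_2$ (which you already proved) bounds all intermediate $Y$-values by $Y_{\mathbf{s}}$, resp.\ $Y_{\mathbf{r}}$; moreover $\sum_i\log([r_i,s_i]/s_i)=\log(r/(\mathbf{r},\mathbf{s}))=\log A$, and subtracting the two expansions about $y_{\mathbf{t}}$ gives the lemma. Replacing your sideways path by this lcm detour repairs the proof.
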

\begin{proof}
We recall the definitions of $\psi$, $F_2$, $U_k=k^{-1/2}$ and $T_k=k\log{k}$ from Section \ref{sctn:InitialConsiderations}. Given $u,v\ge 0$ with $|u-v|\le \epsilon$, we have
\begin{equation}
\frac{1}{1+T_ku}= \frac{1+O(T_k\epsilon)}{1+T_kv},\qquad \psi(u)=\psi(v)+O(\epsilon).\label{eq:BasicBounds}
\end{equation}
We let $u_i=\log{r_i}/\log{R}$, $v_i=\log{s_i}/\log{R}$ and $\epsilon_i=v_i-u_i$. For part $(i)$ we have $\epsilon_i=0$ for $i\ne j$ and $\epsilon_j=\log{A}/\log{R}$. We may assume $\epsilon_j\le 1, u_j\le U_k$ since otherwise the result is trivial. By \eqref{eq:BasicBounds} we have
\begin{align}
\psi\Bigl(\sum_{i=1}^k v_i\Bigr)\frac{\psi(v_j/U_k)}{1+T_kv_j}&=\Bigl(\psi\Bigr(\sum_{i=1}^k u_i\Bigr)+O\Bigl(\frac{\log{A}}{\log{R}}\Bigr)\Bigr)\Bigl(\psi\Bigl(\frac{u_j}{U_k}\Bigr)+O\Bigl(\frac{\log{A}}{U_k\log{R}}\Bigr)\Bigr)\frac{1+O\Bigl(T_k\frac{\log{A}}{\log{R}}\Bigr)}{1+T_ku_j}.
\end{align}
Since $1+U_k^{-1}\ll T_k$, $0\le \psi\le 1$ and $\psi(v_j/2)=1$ (since $v_j=u_j+\epsilon_j\le 1+U_k<9/5$), expanding the terms and multiplying by $\prod_{i\ne j}\psi(u_i/U_k)/(1+T_ku_i)$ gives the result for $(i)$. 

We now consider part $(ii)$.
We let $\mathbf{t}$ be the vector with $t_i=[r_i,s_i]$. By applying part $(i)$ to each component in turn, and using the fact that $Y_{\mathbf{r}}$ is decreasing, we find that
\begin{equation}
y_{\mathbf{s}}=y_{\mathbf{t}}+O\Bigl(T_k Y_{\mathbf{s}} \sum_{i=1}^k\frac{\log{[r_i,s_i]/s_i}}{\log{R}}\Bigr)=y_{\mathbf{t}}+O\Bigl(T_k Y_{\mathbf{s}} \frac{\log{A}}{\log{R}}\Bigr).
\end{equation}
We obtain the same expression for $\mathbf{r}$ in place of $\mathbf{s}$, and hence the result follows.
%
\end{proof}

We use the following lemma to estimate the various smoothed sums of multiplicative functions which we will encounter.
\begin{lmm}\label{lmm:PartialSummation}
Let $A_1,A_2,L>0$. Let $\gamma$ be a multiplicative function satisfying
\[0\le \frac{\gamma(p)}{p}\le 1-A_1,\quad\text{and}\quad -L\le \sum_{w\le p\le z}\frac{\gamma(p)\log{p}}{p}- \log{z/w}\le A_2\]
for any $2\le w\le z$. Let $g$ be the totally multiplicative function defined on primes by $g(p)=\gamma(p)/(p-\gamma(p))$. Finally, let $G:[0,1]\rightarrow \mathbb{R}$ be smooth, and let $G_{max}=\sup_{t\in [0,1]}(|G(t)|+|G'(t)|)$. Then
\[\sum_{d<z}\mu(d)^2g(d)G\Bigl(\frac{\log{d}}{\log{z}}\Bigr)=c_\gamma \log{z}\int_0^1G(x)dx+O_{A_1,A_2}(c_\gamma LG_{max}),\]
where
\[c_\gamma=\prod_{p}\Bigl(1-\frac{\gamma(p)}{p}\Bigr)^{-1}\Bigl(1-\frac{1}{p}\Bigr).\]
\end{lmm}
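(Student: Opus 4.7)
The plan is to reduce the lemma to its special case $G\equiv 1$ via Abel summation, where the special case is a classical Mertens-type asymptotic for the unweighted sum $T(z):=\sum_{d<z}\mu^2(d)g(d)$.

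For the special case I would aim to prove $T(z)=c_\gamma\log z + O_{A_1,A_2}(c_\gamma L)$. The cleanest route is via the Dirichlet series $F(s):=\sum_d\mu^2(d)g(d)d^{-s}=\prod_p(1+g(p)/p^s)$. The identity $1+g(p)=p/(p-\gamma(p))$ gives the factorisation $F(s)=\zeta(s+1)H(s)$, where $H(s)=\prod_p(1+g(p)/p^s)(1-1/p^{s+1})$ has $H(0)=c_\gamma$. The Mertens-type hypothesis on $\gamma$ (together with $g(p)\le\gamma(p)/(A_1 p)$) forces $\log H(s)$ to converge absolutely in a strip around $\mathrm{Re}(s)=0$ with uniform bounds in $L$; applying Perron's formula and shifting the contour past the simple pole of $F$ at $s=0$ then produces $c_\gamma\log z$ as the main term with error $O(c_\gamma L)$. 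A more elementary route uses $\log d=\sum_{p\mid d}\log p$ on squarefree $d$ to derive the recursion
\[T(z)\log z-\int_1^z T(t)\,\frac{dt}{t}=\sum_{p<z}g(p)\log p\cdot T^{(p)}(z/p),\]
where $T^{(p)}$ is the coprime-to-$p$ analogue of $T$, and then bootstrap against the Mertens hypothesis to read off the asymptotic.

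Given the unweighted estimate, the general case follows from Abel summation: writing $T(t)=c_\gamma\log t+E(t)$ with $E(t)=O(c_\gamma L)$,
\[\sum_{d<z}\mu^2(d)g(d)G\Bigl(\frac{\log d}{\log z}\Bigr)=T(z^-)G(1)-\int_1^z T(t)\,G'\Bigl(\frac{\log t}{\log z}\Bigr)\frac{dt}{t\log z}.\]
After the substitution $x=\log t/\log z$ and one further integration by parts, the $c_\gamma\log t$ part of the integral equals $c_\gamma\log z\,(G(1)-\int_0^1 G(x)\,dx)$; the boundary term $c_\gamma G(1)\log z$ cancels, leaving the claimed main term $c_\gamma\log z\int_0^1 G(x)\,dx$. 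The contribution from $E$ is bounded by $|G(1)|\,|E(z)|+G_{max}\int_1^z |E(t)|\,dt/(t\log z)\ll c_\gamma L G_{max}$, using the fact that $\int_1^z dt/(t\log z)=1$.

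The main obstacle is producing the unweighted estimate with the precise shape $c_\gamma\log z+O(c_\gamma L)$: one must both extract the correct multiplicative constant $c_\gamma$ (which is carried by $H(0)$) and prevent the error from blowing up by an extra logarithmic factor through the Perron integral or through the recursion. Once the unweighted estimate is available with the right error shape, the Abel summation step above is routine, and the uniformity in $L$ (with implied constants depending only on $A_1,A_2$) follows automatically.
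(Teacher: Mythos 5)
The paper does not prove this lemma: the proof given is a one-line citation to \cite{GGPY} (Goldston--Graham--Pintz--Y\i ld\i r\i m), ``with $\kappa=1$ and slight changes to the notation.'' So there is no proof in the paper itself to compare against; the relevant comparison is with what \cite{GGPY} does.

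Your reduction to the unweighted case $G\equiv 1$ is correct and cleanly executed. Writing $T(t)=\sum_{d<t}\mu^2(d)g(d)=c_\gamma\log t+E(t)$, the Abel-summation identity
\begin{equation*}
\sum_{d<z}\mu^2(d)g(d)G\Bigl(\frac{\log d}{\log z}\Bigr)=T(z^-)G(1)-\int_1^z T(t)\,G'\Bigl(\frac{\log t}{\log z}\Bigr)\frac{dt}{t\log z}
\end{equation*}
is right; the substitution $x=\log t/\log z$ and one more integration by parts collapses the $c_\gamma\log t$ contribution to $c_\gamma\log z\int_0^1 G$, with the boundary $c_\gamma G(1)\log z$ cancelling; and since $\int_1^z dt/(t\log z)=1$, the $E$-contribution is $\ll c_\gamma LG_{max}$ \emph{provided} $E(t)\ll c_\gamma L$ uniformly on $[1,z]$ (which is legitimate, because the hypothesis on $\gamma$ is inherited at all scales $\le z$). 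This is precisely the mechanism by which the general-$G$ statement is extracted from the $G\equiv 1$ case in \cite{GGPY}.

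The genuine gap is the unweighted estimate $T(z)=c_\gamma\log z+O_{A_1,A_2}(c_\gamma L)$, which is where all the content lies, and which you sketch but do not prove. The Perron route is not as routine as ``shift past the pole'': the hypothesis controls only the Mertens-type average $\sum_{w\le p\le z}\gamma(p)\log p/p$, not the individual $\gamma(p)$, so bounding $H(s)$ in a strip with implied constants depending only on $A_1,A_2$ (and with $L$ appearing linearly, not as $e^{O(L)}$) requires real work. The elementary recursion
\begin{equation*}
T(z)\log z-\int_1^z T(t)\,\frac{dt}{t}=\sum_{p<z}g(p)\log p\cdot T^{(p)}(z/p)
\end{equation*}
is correctly derived, but the bootstrap must be done carefully: one has to replace $T^{(p)}(z/p)$ by $T(z/p)$ with controlled multiplicative error, feed in the Mertens hypothesis to evaluate $\sum_{p<z}g(p)\log p$, and verify that the iterated error does not accumulate a spurious $\log\log z$ factor. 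You correctly identify this as ``the main obstacle,'' but acknowledging the obstacle is not the same as clearing it. As it stands, the proposal is a correct skeleton (the Abel-summation half is complete) resting on an unproven core estimate --- which happens to be exactly the lemma the paper outsources to \cite{GGPY}.
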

\begin{proof}
This is \cite[Lemma 4]{GGPY}, with $\kappa=1$ and slight changes to the notation.
\end{proof}
\begin{lmm}\label{lmm:MultipleSummation}
Let $r\le k\ll (\log{R})^{1/5}$. Let $W_1,\dots, W_r\le R^{O(k)}$ all be a multiple of $\prod_{p\le 2k^2}p$. Let $g$ be a multiplicative function with $g(p)=p+O(k)$. Let $G:\mathbb{R}\rightarrow\mathbb{R}$ be a smooth function supported on the interval $[0,1]$ such that 
\[\sup_{t\in [0,1]}(|G(t)|+|G'(t)|)\le \Omega_G\int_0^\infty G(t)dt,\]
for some quantity $\Omega_G$ which satisfies $r\Omega_G=o((\log{R})/(\log\log{R}))$.
Let $\Phi:\mathbb{R}\rightarrow\mathbb{R}$ be smooth with $\Phi(t),\Phi'(t)\ll 1$ for all $t$. 

Then for $k$ sufficiently large we have
\begin{align*}
\sum_{\substack{\mathbf{e}\in\mathbb{N}^r\\ (e_i,W_i)=1\forall i }}\frac{\mu^2(e)}{g(e)}&\Phi\Bigl(\sum_{i=1}^k\frac{\log{e_i}}{\log{R}}\Bigr)\prod_{i=1}^k G\Bigl(\frac{\log{e_i}}{\log{R}}\Bigr)=\Pi_g (\log{R})^r\idotsint\limits_{t_1,\dots,t_r\ge 0} \Phi(\sum_{i=1}^r t_i)\prod_{i=1}^r G(t_i)dt_i\\
&+O\Bigl(r \Omega_G\Pi_g(\log{R})^{r-1}\log\log{R}\idotsint\limits_{t_1,\dots,t_r\ge 0}\prod_{i=1}^r G(t_i)dt_i\Bigr),
\end{align*}
where
\[\Pi_g=\prod_{p}\Bigl(1+\frac{n(p)}{g(p)}\Bigr)\Bigl(1-\frac{1}{p}\Bigr)^r,\qquad n(p)=\#\{i\in\{1,\dots,r\}:p\nmid W_i\}.\]
\end{lmm}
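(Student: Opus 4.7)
The plan is to reduce to the one-dimensional Lemma \ref{lmm:PartialSummation} by iterating over the variables $e_1, e_2, \ldots, e_r$ in turn. For each fixed $\mathbf{e}' = (e_2, \ldots, e_r)$ of pairwise coprime squarefree integers satisfying $(e_i, W_i) = 1$, the squarefreeness constraint $\mu^2(\prod_i e_i) = 1$ reduces to $(e_1, V_1) = 1$ where $V_1 := W_1 \prod_{i \ge 2} e_i$. Setting $\Psi(t_1) := G(t_1)\Phi\bigl(t_1 + \sum_{i\ge 2}(\log e_i)/\log R\bigr)$, I would apply Lemma \ref{lmm:PartialSummation} to the inner sum over $e_1$ with $\gamma(p) = p\,\mathbf{1}[p \nmid V_1]/(g(p)+1)$, so that the associated totally multiplicative function of Lemma \ref{lmm:PartialSummation} coincides with $\mathbf{1}[p\nmid V_1]/g(p)$ on primes. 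The hypotheses are routine: $g(p) = p + O(k)$ and the fact that $W_1$ is a multiple of every prime $p \le 2k^2$ give absolute constants $A_1, A_2$; the Mertens loss is controlled by $L \ll \sum_{p \mid V_1}(\log p)/p \ll \log k + \log\log R \ll \log\log R$ using $V_1 \le R^{O(k)}$; and $\Psi$ inherits $\Psi_{\max} \ll G_{\max}$ from $\Phi,\Phi' \ll 1$.

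This first step yields $c_{\gamma_1}(\mathbf{e}')\,(\log R)\int_0^1 \Psi(t_1)\,dt_1 + O\bigl(c_{\gamma_1}(\mathbf{e}') L G_{\max}\bigr)$, where the ``constant'' $c_{\gamma_1}(\mathbf{e}')$ still depends on $\mathbf{e}'$ through which primes divide $\prod_{i\ge 2} e_i$. I would factor
\[c_{\gamma_1}(\mathbf{e}') = C_1 \prod_{i \ge 2} h_1(e_i), \qquad h_1(e_i) = \prod_{\substack{p \mid e_i\\ p \nmid W_1}} \frac{g(p)}{g(p)+1},\]
with $C_1$ the value obtained by ignoring the $\mathbf{e}'$-dependence of $V_1$. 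The factors $h_1(e_i)$ are absorbed into the weight for the subsequent iterations. At step $j$, one sums over $e_j$ against a multiplicative weight whose local prime factor is $\mathbf{1}[p\nmid V_j]/g(p)$ multiplied by $j-1$ correction factors of the form $g(p)/(g(p)+1) = 1 + O(k/p^2)$, so the Mertens hypothesis of Lemma \ref{lmm:PartialSummation} continues to hold with $L \ll \log\log R$ at every step.

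After all $r$ iterations one obtains a main term of the shape $\prod_{j=1}^r C_j \cdot (\log R)^r \int\cdots\int\Phi\bigl(\sum_i t_i\bigr)\prod_i G(t_i)\,dt$. A prime-by-prime comparison then identifies $\prod_{j=1}^r C_j$ with $\Pi_g$: at each prime $p$, the telescoping contributions from the $r$ steps combine to $(1 + n(p)/g(p))(1 - 1/p)^r$ up to a multiplicative error $1 + O(k^2/p^2)$, which is absorbed by the convergence of the Euler product defining $\Pi_g$. Tracking the $r$ error terms, each of size $O(L G_{\max}) = O(\Omega_G (\log\log R) \int G)$ relative to the step's main-term factor, and summing them yields a total error bounded by $r\Omega_G\Pi_g(\log R)^{r-1}(\log\log R)\int\cdots\int\prod_i G(t_i)\,dt$, precisely as claimed (with $\Phi$ dropped from the error because $\Phi \ll 1$). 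The hypothesis $r\Omega_G = o(\log R/\log\log R)$ guarantees that this error is genuinely smaller than the main term.

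The principal technical obstacle is the Euler-product bookkeeping behind $\prod_j C_j = \Pi_g$: one must verify that the corrections accumulated over $r$ iterations assemble cleanly prime by prime, which relies on the shift $g(p) \leadsto g(p) + O(r) = p + O(k)$ staying in the regime where Lemma \ref{lmm:PartialSummation} applies, and on the absolute convergence of $\prod_p (1 + O(k^2/p^2))$ for $k \ll (\log R)^{1/5}$. Given this, the rest of the argument is a mechanical iteration of Lemma \ref{lmm:PartialSummation}.
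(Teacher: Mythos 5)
Your proposal is correct and follows essentially the same route as the paper: iterate Lemma \ref{lmm:PartialSummation} over $e_1,\dots,e_r$, fold the coprimality with the not-yet-summed variables into the $\gamma(p)$ at each step, and absorb the resulting $\mathbf{e}'$-dependent local corrections into the weights of the remaining variables (the paper packages this as the functions $g_j(p)=g(p)+n_j(p)$ and the exact telescoping identity for the constants $c_j$, so your $\prod_j C_j=\Pi_g$ is in fact an identity rather than an approximation up to $1+O(k^2/p^2)$ factors). The only slips are cosmetic: $g(p)/(g(p)+1)=1+O(1/p)$, not $1+O(k/p^2)$, though this does not affect the Mertens hypothesis since the shifted function is still $p+O(k)$; and the accumulated cross terms among the $r$ error contributions need the binomial bookkeeping that your final appeal to $r\Omega_G=o(\log R/\log\log R)$ correctly supplies.
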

\begin{proof}
We let $\Sigma$ denote the sum in the statement of the lemma. We estimate the $\Sigma$ by applying Lemma \ref{lmm:PartialSummation} $r$ times to each variable $e_1,\dots,e_r$ in turn. We use induction to establish that, having applied the lemma $j$ times, we obtain the estimate
\begin{align}
\Sigma=c_j(\log{R})^j\sum_{\substack{e_{j+1},\dots,e_r\\ (e_i,W_i)=1\forall i}}\frac{\mu(e_{j+1}\dots e_r)^2}{g_j(e_{j+1}\dots e_k)}\prod_{i=j+1}^r G(u_i)\idotsint\limits_{t_1,\dots,t_j\ge 0}\Phi\Bigl(\sum_{i=1}^jt_i+\sum_{i=j+1}^ru_i\Bigr)\prod_{i=1}^j G(t_i)dt_i\nonumber\\
+c_j(\log{R})^j\Bigl(\int_0^\infty G(t)dt\Bigr)^j\Biggl(\sum_{\ell =1}^j\binom{j}{\ell}O\Bigl(\frac{\Omega_G\log\log{R}}{\log{R}}\Bigr)^\ell\Biggr)\sum_{\substack{e_{j+1},\dots,e_r\\ (e_i,W_i)=1\forall i}}\frac{\mu(e_{j+1}\dots e_r)^2}{g_j(e_{j+1}\dots e_k)}\prod_{i=j+1}^r G(u_i),\label{eq:JthExpression}
\end{align}
where
\begin{align}
u_i&=\frac{\log{e_i}}{\log{R}},\qquad &n_j(p)&=\#\{i\in\{1,\dots,j\}:p\nmid W_i\},\\
g_j(d)&=\prod_{p|d}(g(p)+n_j(p)), &c_j&=\prod_p\Bigl(1+\frac{n_j(p)}{g(p)}\Bigr)\Bigl(1-\frac{1}{p}\Bigr)^j.\nonumber
\end{align}
We see that \eqref{eq:JthExpression} clearly holds when $j=0$. We now assume that \eqref{eq:JthExpression} holds for some $j<r$, and apply Lemma \ref{lmm:PartialSummation} to the sum over $e_{j+1}$. In the notation of Lemma \ref{lmm:PartialSummation}, we have
\begin{align}
\gamma(p)&=\begin{cases}
0,\qquad &p|W_{j+1}\prod_{i=j+2}^r e_i,\\
p(1+n_{j}(p)+g(p))^{-1}=1+O(k/p),&p\nmid W_{j+1}\prod_{i=j+2}^r e_i.
\end{cases}
\end{align}
Since $W_{j+1}$ is a multiple of all primes $p\le 2k^2$ (by assumption of the lemma), we see that we can take $A_1$ and $A_2$ to be fixed constants (independent of $j,k,r,x$) provided $k$ is sufficiently large. With this choice of $\gamma(p)$, we see that
\begin{align}
L&\ll 1+\sum_{p|W_{j+1}\prod_{i=j+2}^r e_i}\frac{\log{p}}{p}+\sum_{p>2k^2}\frac{k\log{p}}{p^2}\ll \log\log{R}.
\end{align}
Here we used the fact the first sum is over prime divisors of an integer which is $\ll R^{O(k^2)}$, and this sum is largest when all the prime divisors are smallest, and that $k\ll (\log{R})^{1/5}\ll \log{R}$.

We apply Lemma \ref{lmm:PartialSummation} to the main term with the smooth function $G_1$, and to the error term with the smooth function $G_2$ defined by
\begin{align}
G_1(t)&=\idotsint\limits_{t_1,\dots,t_{j}\ge 0}G(t)\Phi\Bigl(\sum_{i=1}^{j}t_i+t+\sum_{i=j+2}^ru_i\Bigr)\Bigl(\prod_{i=1}^{j}G(t_i)dt_i\Bigr)\Bigl(\prod_{i=j+2}^r G(u_i)\Bigr),\\
G_2(t)&=G(t)\Bigl(\int_{t\ge 0}G(t')dt'\Bigr)^j\Bigl(\prod_{i=j+2}^r G(u_i)\Bigr),
\end{align}
where we recall $u_i=(\log{e_i})/\log{R}$ for $i>j+1$. With this choice, we see that from the bounds on $\Phi, G$ given in the lemma, we have
\begin{align}
\sup_{t\in[0,1]}(|G_{1}(t)|+|G_{1}'(t)|+|G_2(t)|+|G_2'(t)|)&\ll \Omega_G\Bigl(\int_{t\ge 0}G(t)dt\Bigr)^{j+1}\Bigl(\prod_{i=j+2}^r G(u_i)\Bigr)\nonumber\\
&=\Omega_G\int_{t\ge 0}G_2(t)dt.
\end{align}
Thus Lemma \ref{lmm:PartialSummation} gives
\begin{align}
\sum_{\substack{e_{j+1}\\ (e_{j+1},W_{j+1}\prod_{i=j+2}^re_i)=1}}\frac{\mu^2(e_{j+1})}{g_j(e_{j+1})}G_{1}\Bigl(\frac{\log{e_{j+1}}}{\log{R}}\Bigr)=\log{R}\prod_{p}\Bigl(1-\frac{\gamma(p)}{p}\Bigr)^{-1}\Bigl(1-\frac{1}{p}\Bigr)\int_0^\infty G_{1}(t)dt\nonumber\\
+O\Bigl(\Omega_G\log\log{R}\prod_{p}\Bigl(1-\frac{\gamma(p)}{p}\Bigr)^{-1}\Bigl(1-\frac{1}{p}\Bigr)\int_0^\infty G_2(t) dt\Bigr),\label{eq:PerformedSummation}
\end{align}
and we obtain the same expression when summing with $G_2$ instead of $G_1$, except $\int_0^\infty G_1(t)dt$ is replaced by $\int_0^\infty G_2(t)dt$ in the main term. The implied constant in the error term is independent of $j$. We note that
\begin{align}
c_j\prod_p\Bigl(1-\frac{\gamma(p)}{p}\Bigr)^{-1}\Bigl(1-\frac{1}{p}\Bigr)&=\prod_{p| W_{j+1}}\Bigl(1+\frac{n_j(p)}{g(p)}\Bigr)\Bigl(1-\frac{1}{p}\Bigr)^{j+1}\prod_{p\nmid W_{j+1}}\Bigl(1+\frac{n_j(p)+1}{g(p)}\Bigr)\Bigl(1-\frac{1}{p}\Bigr)^{j+1}\nonumber\\
&\times \prod_{\substack{p|e_{j+2}\dots e_{r}\\ p\nmid W_{j+1}}}\Bigl(\frac{n_j(p)+g(p)}{n_j(p)+g(p)+1}\Bigr)\nonumber\\
&=\frac{c_{j+1}g_j(e_{j+2}\dots e_{r})}{g_{j+1}(e_{j+2}\dots e_{r})}.\label{eq:PartialSingularSeries}
\end{align}
Therefore substituting \eqref{eq:PerformedSummation} and \eqref{eq:PartialSingularSeries} into \eqref{eq:JthExpression} gives the result for $j+1$. We conclude that \eqref{eq:JthExpression} holds for all $j\le r$.

Finally, let $\varepsilon=(\Omega_G\log\log{R})/\log{R}$. By assumption of the lemma, we have $\varepsilon=o(1/r)$. We see the sum over $\ell$ in \eqref{eq:JthExpression} is $(1+O(\varepsilon))^j-1=O(j\varepsilon)$ where, by our bound on $\varepsilon$, the implied constant is independent of $j\le r$. Substituting this into \eqref{eq:JthExpression} with $j=r$ gives the result.
\end{proof}
\begin{lmm}\label{lmm:LambdaSize}
Let $k\le (\log{x})^{1/5}$ be sufficiently large in terms of $\theta$. Then we have
\begin{align*}
&(i)\qquad |\lambda_\mathbf{d}|\ll k^{-k}(\log{R})^k,\\
&(ii)\qquad w_n\ll k^{-2k}(\log{x})^{2k}\prod_{i=1}^k\prod_{p|L_i(n),p\nmid B}4,\\
&(iii)\qquad w_n\ll R^{2+o(1)}.\end{align*}
\end{lmm}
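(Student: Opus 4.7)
The plan is to prove (i) directly by careful application of Lemma \ref{lmm:PartialSummation} to the defining sum, and to deduce (ii) and (iii) from (i) by relatively crude counting arguments.

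For (i), starting from $\lambda_{\mathbf{d}} = \mu(d)d\sum_{\mathbf{d}|\mathbf{r},\mathbf{r}\in\mathcal{D}_k} y_{\mathbf{r}}/\phi_\omega(r)$, I would perform the change of variables $r_i = d_i e_i$. The $\mathcal{D}_k$ conditions combined with $\mathbf{d}|\mathbf{r}$ translate into: $\mathbf{e}$ squarefree, $(e_j, W_j d) = 1$ for each $j$, and the $e_i$ pairwise coprime. Since $\phi_\omega(r) = \phi_\omega(d)\phi_\omega(e)$, dropping the pairwise coprimality (which only decreases the sum) yields the upper bound
\[|\lambda_{\mathbf{d}}| \le \frac{d}{\phi_\omega(d)}\cdot C \cdot \sum_{\mathbf{e}} \frac{\mu^2(e)}{\phi_\omega(e)}\prod_{i=1}^k G_i(v_i),\]
where $C = W^kB^k\mathfrak{S}_{WB}(\mathcal{L})/\phi(WB)^k$, $u_i=\log d_i/\log R$, $v_i=\log e_i/\log R$, and $G_i(v) = \psi((u_i+v)/U_k)/(1+T_k(u_i+v))$, using the pointwise bound $F(t)\le\prod_i\psi(t_i/U_k)/(1+T_k t_i)$ which follows directly from \eqref{eq:FDef} since $\psi\le 1$.

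Since the $G_i$ vary with $i$, I would apply Lemma \ref{lmm:PartialSummation} iteratively (as in the proof of Lemma \ref{lmm:MultipleSummation}) with $g=\phi_\omega$. Absorbing the extra coprimality $(e_j,d)=1$ into the replacement $W_j\to\mathrm{lcm}(W_j,d)$, the modified Euler factor becomes $\Pi_g' = C^{-1}\prod_{p|d}(1-\omega(p)/p)$, as the primes $p|d$ (which satisfy $p\nmid WB$) switch from contributing $(1+\omega(p)/(p-\omega(p)))$ to $1$ in the product. The crucial cancellation
\[\frac{d}{\phi_\omega(d)}\cdot C\cdot\Pi_g' = \prod_{p|d}\frac{p}{p-\omega(p)}\cdot\frac{p-\omega(p)}{p} = 1\]
leaves $|\lambda_{\mathbf{d}}| \ll (\log R)^k\prod_i\int_0^{U_k-u_i}G_i(t)\,dt$. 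Each integral equals $T_k^{-1}\log((1+T_kU_k)/(1+T_ku_i))\le T_k^{-1}\log(1+T_kU_k)$, and with $T_kU_k=k^{1/2}\log k$ this is $O(\log k/(k\log k)) = O(1/k)$. Hence $|\lambda_{\mathbf{d}}|\ll k^{-k}(\log R)^k$ uniformly in $\mathbf{d}$.

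For (ii), the structure of $\mathcal{D}_k$ via the indices $j_{p,\ell}$ forces each prime $p\nmid B$ dividing $\prod_i L_i(n)$ to contribute only a binary choice (either no $d_j$ is divisible by $p$, or exactly one admissible $d_{j'}$ is). Hence $\#\{\mathbf{d}\in\mathcal{D}_k:\mathbf{d}|\mathbf{L}(n)\}\le \prod_i\prod_{p|L_i(n),p\nmid B}2$, and $w_n \le (\text{count})^2\cdot(\max|\lambda_{\mathbf{d}}|)^2$ combined with (i) yields (ii). For (iii), the trivial count $\#\{\mathbf{d}\in\mathcal{D}_k:d\le R\}\le\sum_{d\le R}k^{\omega(d)}\ll R(\log R)^{k-1}$ combined with (i) gives $|\sum_\mathbf{d}\lambda_{\mathbf{d}}|\ll R(\log R)^{2k-1}k^{-k}$, whence $w_n\ll R^2(\log R)^{4k}k^{-2k} = R^{2+o(1)}$, absorbing the logarithmic factor via $k\le(\log x)^{1/5}$. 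The main obstacle is the clean Euler-product bookkeeping in (i); in particular, verifying that the modified $\Pi_g'$ exactly cancels the $(d/\phi_\omega(d))\cdot C$ prefactor is what produces a bound that is uniform in $\mathbf{d}$.
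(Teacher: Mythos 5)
Your proof is correct and follows essentially the same route as the paper: express $\lambda_\mathbf{d}$ via the $y_\mathbf{r}$ substitution, reduce to a sum of a multiplicative function over $\mathbf{e}=\mathbf{r}/\mathbf{d}$, observe the exact cancellation between $d/\phi_\omega(d)$, the $\mathfrak{S}_{WB}(\mathcal{L})W^kB^k/\phi(WB)^k$ prefactor, and the Euler product from the summation, and then use the one-dimensional integral bound $T_k^{-1}\log(1+T_kU_k)\ll 1/k$ in each coordinate; parts (ii) and (iii) are then crude counting as you describe. The only (minor) departure is how you dispose of the $\psi(\sum t_i)$ factor in $F$: the paper uses the monotonicity of $F$ to replace $\log r_i/\log R$ by $\sigma_i=\log(r_i/d_i)/\log R$ and then invokes Lemma \ref{lmm:MultipleSummation} verbatim, whereas you drop $\psi(\sum t_i)$ (i.e.\ bound $F\le F_1$) and keep the shifted argument $u_i+v_i$, which forces you to redo the iterated partial summation with $i$-dependent $G_i$ rather than citing the lemma directly — both yield the same $k^{-k}(\log R)^k$ bound.
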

\begin{proof}
Substituting in our choice of $y_{\mathbf{r}}$, we have for $\mathbf{d}\in\mathcal{D}_k$
\begin{equation}
|\lambda_{\mathbf{d}}|=d\sum_{\mathbf{d}|\mathbf{r}}\frac{y_{\mathbf{r}}}{\phi_\omega(r)}=\frac{d W^k B^k \mathfrak{S}_{WB}(\mathcal{L}) }{\phi_\omega(d)\phi(W B)^k }\sum_{\mathbf{d}|\mathbf{r}\in\mathcal{D}_k}\frac{1}{\phi_\omega(r/d)}F\Bigl(\frac{\log{r_1}}{\log{R}},\dots,\frac{\log{r_k}}{\log{R}}\Bigr).\label{eq:LambdaSizeExpression}
\end{equation}
We obtain an upper bound for \eqref{eq:LambdaSizeExpression} by replacing the $\log{r_i}/\log{R}$ in the argument of $F$ with $\sigma_i=(\log{r_i/d_i})/\log{R}$, since $F$ is decreasing in each argument.

We now estimate the sum using Lemma \ref{lmm:MultipleSummation}. We see from \eqref{eq:FDef} that $F$ is of the form $\Phi(\sum_{i=1}^kt_i)\prod_{i=1}^kG(t_i)$, and we have a bound on $G,\Phi$ which corresponds to $\Omega_G=O(kT_k)$ (where $T_k=k\log{k}$ is the constant given by \eqref{eq:FDef}). Since $k\le(\log{x})^{1/5}$, we see that $k^2T_k=o(\log\log{R}/\log{R})$. Finally, we note that the condition $\mathbf{r}\in\mathcal{D}_k$ forces $(r_j,dW_j)=1$ for integers $W_1,\dots,W_j\le x^{O(k)}$ which are all a multiple of $WB$. Thus we can apply Lemma \ref{lmm:MultipleSummation}, which gives
\begin{align}
\sum_{\mathbf{d}|\mathbf{r}\in\mathcal{D}_k}\frac{F(\sigma_1,\dots,\sigma_k)}{\phi_\omega(r/d)}\le \frac{\phi(WB)^k}{W^kB^k}\prod_{p\nmid WB}\Bigl(1+\frac{\omega(p)}{p-\omega(p)}\Bigr)\Bigl(1-\frac{1}{p}\Bigr)^k\idotsint\limits_{t_1,\dots,t_k\ge 0}H(t_1,\dots,t_k)dt_1\dots dt_k,\label{eq:LambdaSummation}\end{align}
where
\begin{align}
H(t_1,\dots,t_k)&=F(t_1,\dots,t_k)+O\Bigl(\frac{k^2 T_k \log\log{R}}{\log{R}}F_1(t_1,\dots,t_k)\Bigr).
\end{align}
Substituting \eqref{eq:LambdaSummation} into \eqref{eq:LambdaSizeExpression}, noting that the singular series cancel and that $H\le (1+o(1))F_1$, we have
\begin{align}
|\lambda_{\mathbf{d}}|&\le (1+o(1))(\log{R})^k\idotsint\limits_{t_1,\dots,t_k\ge 0}F_1(t_1,\dots,t_k)dt_1\dots dt_k\nonumber\\
&\ll (\log{R})^k\Bigl(\int_0^{U_k}\frac{dt}{1+T_kt}\Bigr)^k\ll \Bigl(\frac{\log{R}}{k}\Bigr)^k.
\end{align}
This gives the first claim. The second claim now follows from this bound and the definition \eqref{eq:WnDef} of $w_n$, recalling that $\lambda_{\mathbf{d}}=0$ unless $d_1,\dots, d_k$ are all squarefree and coprime to $B$. Finally, for the third claim, the fact that $\lambda_{\mathbf{d}}$ is supported on $d=d_1\cdots d_k<R$ gives
\begin{align}
w_n&\ll \frac{(\log{R})^{2k}}{k^{2k}}\Bigl(\sum_{d_1\cdots d_k<R}1\Bigr)^2\ll R^{2+o(1)}\Bigl(\sum_{d_1\cdots d_k<R}\frac{1}{d_1\dots d_k}\Bigr)^2\ll R^{2+o(1)}.\qedhere
\end{align}
\end{proof}
We will eventually be interested in the quantities $I_k$, $J_k$ considered in the following lemma.
\begin{lmm}\label{lmm:IkJk}
Given a square-integrable function $G:\mathbb{R}^k\rightarrow\mathbb{R}$, let
\[I_k(G)=\int_0^\infty\dotsi\int_0^\infty G^2dt_1\dots dt_k,\qquad J_k(G)=\int_0^\infty \dots \int_0^\infty \Bigl(\int_0^\infty G \, dt_k\Bigr)^2dt_1\dots dt_{k-1}.\]
Let $F$, $F_1$, $F_2$ be as given by \eqref{eq:FDef} and \eqref{eq:F1F2Def}. Then
\begin{align*}
\frac{1}{(2k\log{k})^k}\ll I_k(F)\le \frac{1}{(k\log{k})^k},\qquad &\frac{\log{k}}{k}\ll \frac{J_k(F)}{I_k(F)}\ll \frac{\log{k}}{k},\\
I_k(F)\le I_k(F_1)\le I_k( F_2 )/k^2\ll I_k(F),\qquad &J_k(F)\le J_k(F_1)\le J_k( F_2 )/k^2\ll J_k(F).
\end{align*}
\end{lmm}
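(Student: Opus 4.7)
My plan is to evaluate each integral directly on the region where all cut-off factors equal $1$, and to derive the comparisons between $F$, $F_1$, $F_2$ from pointwise inequalities on their integrands supplemented by a concentration argument. For the bounds on $I_k(F)$: since $\psi\le 1$ we have $F\le F_1$ pointwise, so
\[I_k(F)\le I_k(F_1)=\Bigl(\int_0^{U_k}\frac{\psi(t/U_k)^2}{(1+T_k t)^2}\,dt\Bigr)^k\le\Bigl(\frac{U_k}{1+T_k U_k}\Bigr)^k\le T_k^{-k}=(k\log k)^{-k}.\]
For the matching lower bound, I would restrict each $t_i$ to $[0,9/(10k)]$: there $\sum t_i\le 9/10$ and $t_i/U_k\le 9/(10k^{1/2})\le 9/10$, so both cut-offs equal $1$, and the resulting product of elementary integrals is $\bigl((9/(10k))/(1+9(\log k)/10)\bigr)^k\ge (2k\log k)^{-k}$ for $k$ large.

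For the ratio $J_k(F)/I_k(F)\asymp(\log k)/k$, I would compute the inner integral $K(s):=\int_0^\infty\psi(s+t_k)\psi(t_k/U_k)/(1+T_k t_k)\,dt_k$. For $s\le 9/10-U_k$ the first cut-off equals $1$ on the support of the second, giving $K(s)=\int_0^{U_k}\psi(t/U_k)/(1+T_k t)\,dt\asymp\log(T_k U_k)/T_k\asymp 1/k$; and $K(s)\ll 1/k$ for all $s$. Factoring out, $J_k(F)\asymp K_0^2\cdot I_{k-1}(F_1|_{k-1})\asymp T_k^{-(k-1)}/k^2$ (the restriction $s\le 9/10-U_k$ costs only a bounded factor by the concentration argument below), and dividing by $I_k(F)\asymp T_k^{-k}$ gives $T_k/k^2=(\log k)/k$.

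For the comparisons with $F_1$ and $F_2$: $|F|\le |F_1|$ gives $I_k(F)\le I_k(F_1)$ and $J_k(F)\le J_k(F_1)$. On the support of $F_1$ (where all $t_i\le U_k\le 9/5$), each summand $A_j$ of $F_2$ defined in \eqref{eq:F1F2Def} satisfies $A_j=(\psi(t_j/2)/\psi(t_j/U_k))F_1\ge F_1$ since $\psi$ is non-increasing and $U_k\le 2$. Hence $F_2\ge kF_1$ pointwise, giving $I_k(F_2)\ge k^2 I_k(F_1)$ and $J_k(F_2)\ge k^2 J_k(F_1)$. For the reverse direction, Cauchy--Schwarz yields $F_2^2\le k\sum_j A_j^2$; each $\int A_j^2$ factors with the $j$-th factor $\int_0^2\psi(t/2)^2/(1+T_k t)^2\,dt\asymp 1/T_k$ and the rest $\asymp 1/T_k$, so $\int A_j^2\asymp T_k^{-k}\asymp I_k(F_1)$ and $I_k(F_2)\ll k^2 I_k(F_1)$. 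The $J_k$ analog uses $(\int F_2\,dt_k)^2\le k\sum_j(\int A_j\,dt_k)^2$ and the same bookkeeping.

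The one non-trivial step --- and the main obstacle --- is closing the chain via $I_k(F_1)\ll I_k(F)$ (and its $J_k$ analog): one must show that removing the cut-off $\psi(\sum t_i)^2$ discards at most a bounded fraction of the mass of $F_1^2$. My plan is a Chebyshev concentration argument. After substituting $u_i=T_k t_i$, the integrand of $I_k(F_1)$ becomes, up to the constant $T_k^{-k}$, the product $\prod\psi(u_i/(T_k U_k))^2/(1+u_i)^2$ on $[0,T_k U_k]^k$, whose normalised one-dimensional marginal has mean $\Theta(\log(T_k U_k))=\Theta(\log k)$ and variance $\Theta(T_k U_k)=\Theta(k^{1/2}\log k)$. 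The event $\{\sum t_i>9/10\}$ becomes $\{\sum u_i>9T_k/10\}$; since $E[\sum u_i]\sim(k/2)\log k$ lies well below the threshold $(9k/10)\log k$, leaving margin $\Theta(k\log k)$ against standard deviation $\Theta(k^{3/4}(\log k)^{1/2})$, Chebyshev bounds the probability by $O(1/(k^{1/2}\log k))=o(1)$. Hence $I_k(F)\ge (1-o(1))I_k(F_1)$, and the same argument applied to the $(k-1)$-dimensional marginal handles the $J_k$ claim.
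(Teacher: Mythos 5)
Your proposal follows essentially the same route as the paper: the pointwise comparisons $F\le F_1$ and $F_2\ge kF_1$, explicit evaluation of the one-dimensional integrals, and a Chebyshev concentration argument to show that discarding the cutoff $\psi(\sum_i t_i)$ costs only a bounded factor (this is exactly the adaptation of \cite[Section 7]{Maynard} that the paper invokes for $I_k(F)\gg I_k(F_1)$ and $J_k(F)\gg J_k(F_1)$). One small caution on the ratio: since $c_1:=\int_0^\infty\psi(t/U_k)^2(1+T_kt)^{-2}dt=(1+O(k^{-1/2}))/T_k$, the quantity $I_k(F)\asymp c_1^{k}$ equals $T_k^{-k}$ only up to a factor $e^{O(\sqrt{k})}$, so you should divide $J_k(F)\asymp K_0^2\,c_1^{\,k-1}$ by $I_k(F)\asymp c_1^{\,k}$ and cancel the common $(k-1)$-fold factor \emph{before} approximating by powers of $T_k$; done in that order your computation gives $K_0^2/c_1\asymp (\log k)/k$ as required.
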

\begin{proof}
A minor adaption of the argument of \cite[Section 7]{Maynard} to account for the slightly different definition of $F$ shows
\begin{align}
J_k(F)&\ge \idotsint\limits_{\sum_{i=1}^{k-1}t_i<9/10-U_k}\Bigl(\int_0^\infty F_1 dt_k\Bigr)^2dt_1\dots dt_{k-1}\nonumber\\
&\gg \idotsint\limits_{t_1,\dots,t_{k-1}\ge 0}\Bigl(\int_0^\infty F_1 dt_k\Bigr)^2dt_1\dots dt_{k-1}=J_k(F_1).
\end{align}
Applying the same concentration of measure argument to $I_k(F)$ yields
\begin{align}
I_k(F)&\ge \idotsint\limits_{\sum_{i=1}^{k}t_i<9/10} F_1^2 dt_1\dots dt_{k}\gg \idotsint\limits_{t_1,\dots,t_{k}\ge 0}F_1^2 dt_1\dots dt_{k}=I_k(F_1).
\end{align}
We also have the trivial bounds $I_k(F)\le I_k(F_1)\le k^{-2}I_k(F_2)$ and $J_k(F)\le J_k(F_1)\le k^{-2}J_k(F_2)$.
For our choice of $\psi$, $T_k$, $U_k$ from \eqref{eq:FDef}, we see that
\begin{align}
\int_0^\infty\frac{\psi(t/U_k)dt}{1+T_kt}&= \int_0^{9U_k/10}\frac{dt}{1+T_kt}+O\Bigl(\int_{9U_k/10}^{U_k}\frac{dt}{1+T_kt}\Bigr)=\frac{1}{2k}+O\Bigl(\frac{1}{k\log{k}}\Bigr),\\
	\int_0^\infty\frac{\psi(t/U_k)^2dt}{(1+T_kt)^2}&= \int_0^{9U_k/10}\frac{dt}{(1+T_kt)^2}+O\Bigl(\int_{9U_k/10}^{U_k}\frac{dt}{(1+T_kt)^2}\Bigr)=\frac{1+O(k^{-1/2})}{k\log{k}},\\
\int_0^\infty \frac{\psi(t/2)^2 dt}{(1+T_kt)^2}&=\int_0^{9/5}\frac{dt}{(1+T_kt)^2}+O\Bigl(\int_{9/5}^{2}\frac{dt}{(1+T_kt)^2}\Bigr)=\frac{1+O(k^{-1})}{k\log{k}}.\end{align}
From these bounds it follows immediately that $k^{-2}J_k(F_2)\ll J_k(F_1)$ and $k^{-2}I_k(F_2)\ll I_k(F_1)$ and
\begin{equation}
\frac{J_k(F_1)}{I_k(F_1)}=\frac{\log{k}}{4k}\Bigl(1+O\Bigl(\frac{1}{\log{k}}\Bigr)\Bigr).
\end{equation}
Combining these statements gives the bounds of the Lemma.
\end{proof}
\section{Proof of Propositions}
We see that Lemmas \ref{lmm:SingularSeries}, \ref{lmm:LambdaSize} and \ref{lmm:IkJk} verify the claims at the end of Proposition \ref{prpstn:MainProp} for $w_n$ given by \eqref{eq:WnDef}. We are therefore left to establish the four main claims of Proposition \ref{prpstn:MainProp}, which we now do in turn. To obtain results with the desired uniformity in $k$, we need to perform calculations in a slightly different manner to the corresponding ones in \cite{Maynard}.
\begin{prpstn}\label{prpstn:S1}
Let $w_n$ be as described in Section \ref{sctn:InitialConsiderations}. Then we have
\[\sum_{n\in\mathcal{A}(x)}w_n=\Bigl(1+O\Bigl(\frac{1}{(\log{x})^{1/10}}\Bigr)\Bigr)\frac{B^k}{\phi(B)^k}\mathfrak{S}_B(\mathcal{L})\#\mathcal{A}(x)(\log{R})^k I_k(F).\]
The implied constant depends only on $\theta,\alpha$ and the implied constants from \eqref{eq:Bdd1} and \eqref{eq:Bdd2}.
\end{prpstn}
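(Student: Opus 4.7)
The plan is to expand the square defining $w_n$ and swap the order of summation. Writing
\[
\sum_{n\in\mathcal{A}(x)}w_n = \sum_{\mathbf{d},\mathbf{e}}\lambda_{\mathbf{d}}\lambda_{\mathbf{e}}\sum_{\substack{n\in\mathcal{A}(x)\\ [d_i,e_i]\mid L_i(n)\,\forall i}}1,
\]
the support restrictions $\mathbf{d},\mathbf{e}\in\mathcal{D}_k$ from Section \ref{sctn:InitialConsiderations} ensure that $(d_i,e_j)=1$ for $i\ne j$ and that every $[d_i,e_i]$ is coprime to $B$ and to $a_i$ (because $W$ kills all small prime divisors and the index selection forces compatibility). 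Hence the $k$ divisibility conditions combine by the Chinese Remainder Theorem into a single congruence modulo $q=\prod_{i=1}^{k}[d_i,e_i]$ in a residue class $a=a(\mathbf{d},\mathbf{e})$ coprime to $q$, and the inner sum equals $\#\mathcal{A}(x;q,a)$.

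Next I would replace $\#\mathcal{A}(x;q,a)$ by $\#\mathcal{A}(x)/q$. The resulting error is
\[
E\ll\Bigl(\max_{\mathbf{d},\mathbf{e}}|\lambda_{\mathbf{d}}\lambda_{\mathbf{e}}|\Bigr)\sum_{q\le R^{2}}\tau_{3k}(q)\max_{a}\Bigl|\#\mathcal{A}(x;q,a)-\frac{\#\mathcal{A}(x)}{q}\Bigr|,
\]
because each $q$ arises from at most $\tau_{3k}(q)$ pairs $(\mathbf{d},\mathbf{e})$ (splitting $q=\prod_i[d_i,e_i]$ among the $k$ coordinates and then across the three parts $d_i/\gcd, e_i/\gcd, \gcd$). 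Using Lemma \ref{lmm:LambdaSize}(i), the bound $R^{2}\le x^{2\theta/3}$ so that $q\le x^\theta$, a Cauchy--Schwarz / divisor moment estimate combined with \eqref{eq:Bdd1} and the trivial bound \eqref{eq:Bdd2} handles the large divisor weight, and I would get $E\ll \#\mathcal{A}(x)(\log R)^{2k}/(\log x)^{50k^2}$, which is absorbed by the claimed error factor $(\log x)^{-1/10}$.

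For the main term I need to evaluate
\[
\#\mathcal{A}(x)\sum_{\mathbf{d},\mathbf{e}}\frac{\lambda_{\mathbf{d}}\lambda_{\mathbf{e}}}{\prod_{i=1}^{k}[d_i,e_i]}.
\]
Substituting $\lambda_{\mathbf{d}}=\mu(d)d\sum_{\mathbf{d}\mid\mathbf{r}}y_{\mathbf{r}}/\phi_\omega(r)$, the relative primality of the components and a standard diagonalization (as in Maynard \cite{Maynard}, coordinate by coordinate) reduces this double sum to the diagonal
\[
\#\mathcal{A}(x)\sum_{\mathbf{r}\in\mathcal{D}_k}\frac{y_{\mathbf{r}}^{2}}{\prod_{i=1}^{k}\phi_\omega(r_i)}.
\]
Plugging in the definition \eqref{eq:YDef} of $y_{\mathbf{r}}$ gives an expression of the form
\[
\#\mathcal{A}(x)\Bigl(\frac{W^{k}B^{k}\mathfrak{S}_{WB}(\mathcal{L})}{\phi(WB)^{k}}\Bigr)^{2}\sum_{\mathbf{r}\in\mathcal{D}_k}\frac{\mu^{2}(r)}{\prod_{i=1}^{k}\phi_\omega(r_i)}F\Bigl(\frac{\log r_1}{\log R},\dots,\frac{\log r_k}{\log R}\Bigr)^{2},
\]
to which I apply Lemma \ref{lmm:MultipleSummation} with the smooth function $F^{2}$: the local factors $(1+\omega(p)/(p-\omega(p)))(1-1/p)^{k}$ reconstruct $\mathfrak{S}_{WB}(\mathcal{L})^{-1}$, producing an integral equal to $(\log R)^{k}I_k(F)$. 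A short bookkeeping check, using $W=\prod_{p\le 2k^{2},\,p\nmid B}p$, then collapses the $W$-factors: the ratio $(W^{k}B^{k}/\phi(WB)^{k})^{2}\cdot\mathfrak{S}_{WB}(\mathcal{L})$ divided by the product of $(1-\omega(p)/p)(1-1/p)^{-k}$ over $p\mid W$, $p\nmid B$ gives exactly $(B^{k}/\phi(B)^{k})\mathfrak{S}_B(\mathcal{L})$, matching the target.

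The main obstacle is the error analysis in the second step: controlling $\sum_{q\le R^{2}}\tau_{3k}(q)\max_a|\#\mathcal{A}(x;q,a)-\#\mathcal{A}(x)/q|$ with a $k$-dependent divisor weight while only Hypothesis \ref{hypthss:Weak} saves $(\log x)^{-100k^{2}}$. The Cauchy--Schwarz splitting (trivial $q^{-1}\#\mathcal{A}(x)$ bound on one factor via \eqref{eq:Bdd2}, Hypothesis bound on the other) together with the fact that $k\le(\log x)^{\alpha}$ makes $\tau_{3k}(q)^{2}\ll (\log x)^{O(k)}\ll(\log x)^{k^{2}}$ is what makes the $100k^{2}$ loss in \eqref{eq:Bdd1} sufficient. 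Everything else is standard Selberg sieve bookkeeping once Lemma \ref{lmm:MultipleSummation} is in place.
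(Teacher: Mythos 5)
The opening identity $\sum_{n\in\mathcal{A}(x)}w_n = \sum_{\mathbf{d},\mathbf{e}}\lambda_{\mathbf{d}}\lambda_{\mathbf{e}}\sum_{n:[d_i,e_i]\mid L_i(n)}1$ is false, and the rest of the argument inherits a non-negligible error from this. The definition of $w_n$ in Section \ref{sctn:InitialConsiderations} sets $w_n=0$ whenever $\prod_{i=1}^kL_i(n)$ has a prime factor $p\le 2k^2$ with $p\nmid B$; the formula $w_n=(\sum_{d_i\mid L_i(n)}\lambda_{\mathbf{d}})^2$ holds \emph{only when} $(\prod_iL_i(n),W)=1$. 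Supporting $\lambda_{\mathbf{d}}$ on $d$ coprime to $W$ does not enforce this: for $n$ with a small prime dividing some $L_i(n)$ the Selberg sum is generically nonzero. The paper therefore begins by splitting into residue classes $v_0\pmod W$ and keeping only the $\phi_\omega(W)$ classes with $(\prod_iL_i(v_0),W)=1$; each contributes a main term with denominator $W$, giving a net factor $\phi_\omega(W)/W$ that your computation omits. Your main term is thus too large by $W/\phi_\omega(W)=\prod_{p\mid W}(1-\omega(p)/p)^{-1}=\exp(O(k))$, which is nowhere near being absorbed by the $(\log x)^{-1/10}$ error.

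This surfaces concretely in your ``short bookkeeping check.'' You assert that $(W^kB^k/\phi(WB)^k)^2\mathfrak{S}_{WB}(\mathcal{L})$ divided by $\prod_{p\mid W,\,p\nmid B}(1-\omega(p)/p)(1-1/p)^{-k}$ equals $(B^k/\phi(B)^k)\mathfrak{S}_B(\mathcal{L})$. It does not: since $\prod_{p\mid W}(1-\omega(p)/p)(1-1/p)^{-k}=\phi_\omega(W)W^{k-1}/\phi(W)^k$, the left side equals $\dfrac{W^{k+1}B^{2k}}{\phi(W)^k\phi_\omega(W)\phi(B)^{2k}}\mathfrak{S}_{WB}(\mathcal{L})$, which for $B=1$ would require $W=\phi_\omega(W)$. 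You are also under-counting the local factors in $\Pi_g$ from Lemma~\ref{lmm:MultipleSummation}: because the support condition $\mathbf{r}\in\mathcal{D}_k$ forces $(r_j,WB)=1$, the primes $p\mid WB$ contribute $(1-1/p)^k$, so $\Pi_g=\dfrac{\phi(WB)^k}{W^kB^k}\mathfrak{S}_{WB}(\mathcal{L})^{-1}$, not simply $\mathfrak{S}_{WB}(\mathcal{L})^{-1}$. Including both the $\phi_\omega(W)/W$ from the residue-class sum and the correct $\Pi_g$, one finds
\[
\frac{\phi_\omega(W)}{W}\cdot\Bigl(\frac{W^kB^k\mathfrak{S}_{WB}(\mathcal{L})}{\phi(WB)^k}\Bigr)^2\cdot\frac{\phi(WB)^k}{W^kB^k\mathfrak{S}_{WB}(\mathcal{L})}=\frac{\phi_\omega(W)W^{k-1}}{\phi(W)^k}\cdot\frac{B^k}{\phi(B)^k}\mathfrak{S}_{WB}(\mathcal{L})=\frac{B^k}{\phi(B)^k}\mathfrak{S}_B(\mathcal{L}),
\]
which is the bookkeeping that actually closes. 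The remaining steps --- the Cauchy--Schwarz treatment of the Bombieri--Vinogradov error with the $\tau_{3k}$ divisor weight, and applying Lemma~\ref{lmm:MultipleSummation} to $F^2$ --- are the right approach and match the paper, though your ``diagonalization'' glosses over the off-diagonal terms with $r=s$ but $\mathbf{r}\ne\mathbf{s}$, which the paper controls via Lemma~\ref{lmm:YDifference} and needs for uniformity in $k$.
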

\begin{proof}
We recall $W=\prod_{p\le 2k^2,p\nmid B}p<\exp((\log{x})^{2/5})$, and consider the summation over $n$ in the residue class $v_0\pmod{W}$. If $(\prod_{i=1}^kL_i(v_0),W)\ne 1$ then we have $w_n=0$, and so we restrict our attention to $v_0$ with $(\prod_{i=1}^kL_i(v_0),W)= 1$. We substitute the definition \eqref{eq:WnDef} of $w_n$, expand the square and swap order of summation. This gives
\begin{align}
\sum_{\substack{n\in\mathcal{A}(x)\\n\equiv v_0 \pmod{W}}}w_n=\sum_{\mathbf{d},\mathbf{e}\in\mathcal{D}_k}\lambda_{\mathbf{d}}\lambda_{\mathbf{e}}\sum_{\substack{n\in\mathcal{A}(x)\\ n\equiv v_0 \pmod{W}\\
[d_i,e_i]|L_i(n)\forall i}}1. 
\label{eq:S1FirstSum}
\end{align}
By our choice of support of the $\lambda_{\mathbf{d}}$, there is no contribution unless $(d_ie_i,d_je_j)=1$ for all $i\ne j$. In this case, given $\mathbf{d},\mathbf{e}\in\mathcal{D}_k$ (so in particular $(d_je_j,a_jW)=1$ for $1\le j\le k$), we can combine the congruence conditions by the Chinese remainder theorem, and see that the inner sum is $\mathcal{A}(x;q,a)$ for some $a$ and for $q=W[\mathbf{d},\mathbf{e}]$. We let $E_q^{(1)}=\max_{a}|\#\mathcal{A}(x;q,a)-\#\mathcal{A}(x)/q|$, and substitute $\#\mathcal{A}(x;q,a)=\#\mathcal{A}(x)/q+O(E^{(1)}_q)$ into \eqref{eq:S1FirstSum}. 

We first show the contribution from the errors $E_q^{(1)}$ are small. There are $O(\tau_{3k}(q))$ ways of writing $q=W[\mathbf{d},\mathbf{e}]$ and all such $q$ are square-free, coprime to $B$ and less than $R^2W<x^\theta$ (since $\lambda_{\mathbf{d}}$ is supported on $d<R\le x^{\theta/3}$). Since $|\lambda_\mathbf{d}|\ll (\log{x})^{k}$ by Lemma \ref{lmm:LambdaSize}, these contribute
\begin{align}
\sum_{\mathbf{d},\mathbf{e}\in\mathcal{D}_k}|\lambda_{\mathbf{d}}\lambda_{\mathbf{e}}|E^{(1)}_q&\ll (\log{x})^{2k}\sum_{q<R^2W, (q,B)=1}\mu^2(q)\tau_{3k}(q)E^{(1)}_q\nonumber\\
&\ll (\log{x})^{2k}\Bigl(\sum_{q<R^2W, (q,B)=1}\mu^2(q)\tau_{3k}(q)^2E_q^{(1)}\Bigr)^{1/2}\Bigl(\sum_{q<R^2W, (q,B)=1}\mu^2(q)E_q^{(1)}\Bigr)^{1/2}.
\end{align}
We apply Hypothesis \ref{hypthss:Weak} to estimate these terms. Using $E_q^{(1)}\ll \#\mathcal{A}(x)/q$ for the first sum, and the average of $E_q^{(1)}$ for the second sum, we see the contribution is
\begin{align}
\ll (\log{x})^{2k}\Bigl(\#\mathcal{A}(x)\sum_{q<R}\frac{\tau_{9k^2}(q)}{q}\Bigr)^{1/2}\Bigl(\frac{\#\mathcal{A}(x)}{(\log{x})^{100k^2}}\Bigr)^{1/2}&\ll\frac{\#\mathcal{A}(x)}{W(\log{x})^{2k^2}}.
\end{align}
By Lemma \ref{lmm:SingularSeries} and Lemma \ref{lmm:IkJk}, we see that this is $o(\#\mathcal{A}(x)\mathfrak{S}_B(\mathcal{L})I_k(F)/W)$, and so will be negligible compared with our main term.

We now consider the main term. We substitute our expression \eqref{eq:YDef} for $\lambda_{\mathbf{d}}$ in terms of $y_{\mathbf{r}}$ to give
\begin{equation}
\frac{\#\mathcal{A}(x)}{W}\sideset{}{'}\sum_{\mathbf{d},\mathbf{e}\in\mathcal{D}_k}\frac{\lambda_{\mathbf{d}}\lambda_{\mathbf{e}}}{[\mathbf{d},\mathbf{e}]}=\frac{\#\mathcal{A}(x)}{W}\sum_{\mathbf{r},\mathbf{s}\in\mathcal{D}_k}\frac{y_{\mathbf{r}}y_{\mathbf{s}}}{\phi_\omega(r)\phi_\omega(s)}
\sideset{}{'}\sum_{\mathbf{d}|\mathbf{r},\mathbf{e}|\mathbf{s}}\frac{\mu(d)\mu(e) d e}{[\mathbf{d},\mathbf{e}]},
\label{eq:S1YrYsSum}
\end{equation}
where $\sum'$ indicates the restriction that $(d_ie_i,d_je_j)=1$ for all $i\ne j$. By multiplicativity, we can write the inner sum as $\prod_{p| rs} S_p(\mathbf{r},\mathbf{s})$,
where, for $\mathbf{r},\mathbf{s}$ such that $p|rs$ and $y_{\mathbf{r}}y_{\mathbf{s}}\ne 0$, we have
\begin{equation}
S_p(\mathbf{r},\mathbf{s})=\sideset{}{'}\sum_{\substack{\mathbf{d}|\mathbf{r},\mathbf{e}|\mathbf{s}\\ d_i,e_i|p\forall i}}\frac{\mu(d) \mu(e) d e}{ [\mathbf{d},\mathbf{e}] }
=
\begin{cases}
p-1,\qquad &p|(\mathbf{r},\mathbf{s}),\\
-1, & p|r,p|s, p\nmid (\mathbf{r},\mathbf{s}),\\
0, &(p|r\text{ and }p\nmid s)\text{ or }(p|s\text{ and }p\nmid r).
\end{cases}
\label{eq:SpDef}
\end{equation}
(We remind the reader that in our notation, $r=\prod_{i=1}^kr_i$ and that $(\mathbf{r},\mathbf{s})=\prod_{i=1}^k(r_i,s_i)$, and similarly for $e,d,s$.)

Since $\prod_{p|rs}S_p(\mathbf{r},\mathbf{s})=0$ if there is a prime $p$ which divides one of $r,s$ but not the other, we can restrict to $r=s$. We let $A=A(\mathbf{r},\mathbf{s})=r/(\mathbf{r},\mathbf{s})$ be the product of primes dividing $r$ but not $(\mathbf{r},\mathbf{s})$, so that $\prod_{p|rs}S_p(\mathbf{r},\mathbf{s})=\mu(A)\phi(r)/\phi(A)$. Given a choice of $\mathbf{r}\in\mathcal{D}_k$ and $A|r$, for each prime $p|A$ there are $\omega(p)-1$ possible choices of which components of $\mathbf{s}$ can be a multiple of $p$ (since there are $\omega(p)$ indices $j$ for which $p\nmid W_j$, but for one of these we have $p|r_j$), and so $\prod_{p|A}(\omega(p)-1)$ choices of $\mathbf{s}$. By Lemma \ref{lmm:YDifference}, for each such choice we have
\begin{equation}y_{\mathbf{s}}=y_{\mathbf{r}}+O\Bigl(T_k (Y_\mathbf{r}+Y_{\mathbf{s}}) \frac{\log{A}}{\log{R}}\Bigr).\end{equation}
Thus our main term becomes
\begin{equation}\frac{\#\mathcal{A}(x)}{W}\sum_{\mathbf{r}\in\mathcal{D}_k}\frac{y_{\mathbf{r}}\phi(r)}{\phi_\omega(r)^2}\sum_{A|r}\Bigl(\prod_{p|A}\frac{-(\omega(p)-1)}{p-1}\Bigr)\Bigl(y_{\mathbf{r}}+O\Bigl(T_k (Y_\mathbf{r}+Y_{\mathbf{s}})\frac{\log{A}}{\log{R}}\Bigr)\Bigr).\end{equation}
Since $y_\mathbf{r}\le Y_\mathbf{r}/k$ and $Y_{\mathbf{r}}Y_{\mathbf{s}}\ll Y_{\mathbf{r}}^2+Y_{\mathbf{s}}^2$, the contribution of the error here is
\begin{align}
&\ll \frac{T_k}{k} \frac{\#\mathcal{A}(x)}{W}
\sum_{\substack{\mathbf{r}\in\mathcal{D}_k}}\frac{\phi(r)Y_{\mathbf{r}}^2}{\phi_{\omega}(r)^2}
\sum_{A|r}\frac{\omega(A)}{\phi(A)}
\sum_{p|A}\frac{\log{p}}{\log{R}}
\nonumber\\
&\ll \frac{T_k}{k} \frac{\#\mathcal{A}(x)}{W\log{R}}
\sum_{p>2k^2}\log{p}\sum_{\substack{(A,WB)=1\\ p|A}}\frac{\omega(A)}{\phi(A)}\sum_{\substack{\mathbf{r}\in\mathcal{D}_k\\ A|r}}\frac{\phi(r)Y_{\mathbf{r}}^2}{\phi_\omega(r)^2}.
\end{align}
We let $\mathbf{r}'$ be the vector formed by removing from $\mathbf{r}$ any factors of $A$, so $r_i'=r_i/(r_i,A)$. Since $Y_{\mathbf{r}}$ is decreasing, we have $Y_{\mathbf{r}'}\ge Y_{\mathbf{r}}$. Given $\mathbf{r}'$, there are $O(\omega(A))$ possible choices of $\mathbf{r}$. Thus, swapping the summation to $\mathbf{r}'$, and letting $A=pA'$, we obtain the bound
\begin{align}
&\ll \frac{T_k}{k} \frac{\#\mathcal{A}(x)}{W\log{R}}
\Bigl(\sum_{p>2k^2}\frac{\omega(p)^2\log{p}}{\phi_{\omega}(p)^2}\Bigr)
\Bigl(\sum_{(A',WB)=1}\frac{\omega(A')^2}{\phi_{\omega}(A')^2}\Bigr)
\Bigl(\sum_{\substack{\mathbf{r}'\in\mathcal{D}_k}}\frac{\phi(r')Y_{\mathbf{r}'}^2}{\phi_\omega(r')^2}\Bigr).\label{eq:S1ErrorSum}
\end{align}
The first two terms in parentheses can both be seen to be $O(1)$, since all prime factors are greater than $2k^2$. We estimate the final term by Lemma \ref{lmm:MultipleSummation} (taking $\Omega_G=O(T_k^2)$). This gives a bound for \eqref{eq:S1ErrorSum} of size
\begin{align}
&\ll \frac{T_k W^{k-1} B^k(\log{R})^{k-1}\mathfrak{S}_{WB}(\mathcal{L})^2 \#\mathcal{A}(x)}{k\phi(WB)^k}\prod_{p\nmid WB}\Bigl(1+\frac{\omega(p)(p-1)}{(p-\omega(p))^2}\Bigr)\Bigl(1-\frac{1}{p}\Bigr)^k I_k( F_2 ).\label{eq:S1ErrorTerm1}
\end{align}
We note that
\begin{equation}
\prod_{p\nmid WB}\Bigl(1+\frac{\omega(p)(p-1)}{(p-\omega(p))^2}\Bigr)\Bigl(1-\frac{1}{p}\Bigr)^k=\prod_{p\nmid WB}\Bigl(1+\frac{\omega(p)}{p-\omega(p)}\Bigr)\Bigl(1+O\Bigl(\frac{k^2}{p^2}\Bigr)\Bigr)\Bigl(1-\frac{1}{p}\Bigr)^k\ll \mathfrak{S}_{WB}(\mathcal{L})^{-1},\label{eq:S1ProductBound}
\end{equation}
since the product is only over primes $p>2k^2$. Using $I_k(F_2)\ll k^2I_k(F)$ from Lemma \ref{lmm:IkJk}, we see that \eqref{eq:S1ErrorTerm1} is
\begin{equation}
\ll \frac{kT_k W^{k-1}B^k\mathfrak{S}_{WB}(\mathcal{L})\#\mathcal{A}(x)(\log{R})^{k-1}}{\phi(WB)^k}I_k(F).
\label{eq:S1ErrorTerm}
\end{equation}
This is negligible, and can be absorbed into the error term in the statement of the Lemma. We now consider the main term. We have
\begin{equation}\frac{\#\mathcal{A}(x)}{W}\sum_{\mathbf{r}\in\mathcal{D}_k}\frac{y_{\mathbf{r}}^2\phi(r)}{\phi_\omega(r)^2}\sum_{A|r}\prod_{p|A}\frac{-(\omega(p)-1)}{p-1}=\frac{\#\mathcal{A}(x)}{W}\sum_{\mathbf{r}\in\mathcal{D}_k}\frac{y_{\mathbf{r}}^2}{\phi_\omega(r)}.\end{equation}
We estimate the inner sum here by applying Lemma \ref{lmm:MultipleSummation} (again with $\Omega_G=T_k^2$). This gives
\begin{align}
\sum_{\mathbf{r}\in\mathcal{D}_k}\frac{y_{\mathbf{r}}^2}{\phi_\omega(r)}
&=\frac{W^k B^k \mathfrak{S}_{WB}(\mathcal{L})^{2} }{\phi(WB)^k}(\log{R})^k
\prod_{p\nmid WB}\Bigl(1+\frac{\omega(p)}{p-\omega(p)}\Bigr)\Bigl(1-\frac{1}{p}\Bigr)^{k}
I_k(F)\nonumber\\
&+O\Biggl(\frac{W^k B^k\mathfrak{S}_{WB}(\mathcal{L})^{2}}{\phi(WB)^k} (\log{R})^k
\prod_{p\nmid WB}\Bigl(1+\frac{\omega(p)}{p-\omega(p)}\Bigr)\Bigl(1-\frac{1}{p}\Bigr)^{k}
\frac{k T_k^2\log\log{R}}{\log{R}}I_k(F_1)\Biggr)\nonumber \\
&=\frac{W^k B^k \mathfrak{S}_{WB}(\mathcal{L})}{\phi(WB)^k}(\log{R})^k\Bigl(1+O\Bigl(\frac{k T_k^2\log\log{R}}{\log{R}}\Bigr)\Bigr)
I_k(F).
\label{eq:S1Result}
\end{align}
In the last line we have used the fact $I_k(F_1)\ll I_k(F)$ given by Lemma \ref{lmm:IkJk}.
Putting this all together (and recalling $k\le (\log{x})^{1/5}$ and $T_k=k\log{k}$), we have shown that
\begin{equation}
\sum_{\substack{n\in\mathcal{A}(x)\\n\equiv v_0 \pmod{W}}}w_n=\Bigl(1+O\Bigl(\frac{1}{(\log{x})^{1/10}}\Bigr)\Bigr)\frac{W^{k-1} B^k \mathfrak{S}_{WB}(\mathcal{L})\#\mathcal{A}(x)}{\phi(WB)^k}(\log{R})^k
I_k(F).
\end{equation}
Summing this over the $\phi_\omega(W)$ residue classes $v_0\pmod{W}$ such that $(\prod_{i=1}^k L(v_0),W)=1$ then gives the result.
\end{proof}
\begin{prpstn}\label{prpstn:S2}
Let $w_n$ be as described in Section \ref{sctn:InitialConsiderations}. Let $L(n)=a_m n+b_m\in\mathcal{L}$ satisfy $L(n)>R$ for $n\in[x,2x]$ and
\begin{equation}
\sum_{\substack{q\le x^{\theta}\\ (q,B)=1}}\max_{(L(a),q)=1}\Bigl|\#\mathcal{P}_{L,\mathcal{A}}(x;q,a)-\frac{\#\mathcal{P}_{L,\mathcal{A}}(x)}{\phi_L(q)}\Bigr|\ll \frac{\#\mathcal{P}_{L,\mathcal{A}}(x)}{(\log{x})^{100k^2}}.\label{eq:Bdd3}
\end{equation}
Then we have
\begin{align*}
\sum_{n\in\mathcal{A}(x)}\mathbf{1}_\mathcal{P}(L(n))w_n=\Bigl(1+O\Bigl(\frac{1}{(\log{x})^{1/10}}\Bigr)\Bigr)\frac{B^{k-1}}{\phi(B)^{k-1}}\mathfrak{S}_B(\mathcal{L})\#\mathcal{P}_{L,\mathcal{A}}(x)(\log{R})^{k+1} J_{k}(F)\prod_{\substack{p|a_m\\ p\nmid  B}}\frac{p-1}{p}\\
+O\Bigl(\frac{B^k}{\phi(B)^k}\mathfrak{S}_B(\mathcal{L})\#\mathcal{A}(x)(\log{R})^{k-1} I_k(F)\Bigr).
\end{align*}
The implied constants depend only on $\theta,\alpha$ and the implied constants from \eqref{eq:Bdd1}, \eqref{eq:Bdd2} and \eqref{eq:Bdd3}.
\end{prpstn}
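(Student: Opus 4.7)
The strategy follows the proof of Proposition \ref{prpstn:S1}, with the primality weight $\mathbf{1}_\mathcal{P}(L(n))$ producing an effective dimension reduction from $k$ to $k-1$. I would begin by expanding $w_n$ via \eqref{eq:WnDef}, restricting to residue classes $v_0 \pmod{W}$ with $(\prod_i L_i(v_0), W) = 1$, and swapping summation to get a double sum of $\lambda_\mathbf{d}\lambda_\mathbf{e}$ against an inner count of $n \in \mathcal{A}(x)$ with $[d_i,e_i] \mid L_i(n)$ for each $i$ and $L(n) \in \mathcal{P}$. The crucial new observation is that since $d_m, e_m < R < L(n)$ and $L(n)$ is prime, the divisibility $[d_m,e_m] \mid L(n)$ forces $[d_m, e_m] = 1$, and hence $d_m = e_m = 1$; the alternative $[d_m,e_m] = L(n)$ would require $L(n) \mid d_m$ or $L(n) \mid e_m$, which is impossible.

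For each $(\mathbf{d},\mathbf{e})$ in this restricted range, the Chinese remainder theorem combines the congruences into $\#\mathcal{P}_{L,\mathcal{A}}(x; q, a)$ with $q = W[\mathbf{d}, \mathbf{e}]$ and $(L(a), q) = 1$. I would apply \eqref{eq:Bdd3} to write this as $\#\mathcal{P}_{L,\mathcal{A}}(x)/\phi_L(q) + E^{(2)}_q$, and dispose of the error contribution by Cauchy--Schwarz in direct analogy with the treatment of $E^{(1)}_q$ in Proposition \ref{prpstn:S1}: the bound $|\lambda_\mathbf{d}| \ll (\log x)^k$ from Lemma \ref{lmm:LambdaSize}, the pointwise estimate $E^{(2)}_q \ll \#\mathcal{P}_{L,\mathcal{A}}(x)/\phi_L(q)$, and the averaged bound \eqref{eq:Bdd3} together render this contribution negligible.

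For the main term, substitute \eqref{eq:YDef} to recast the sum in terms of $y_\mathbf{r} y_\mathbf{s}$ over $\mathbf{r}, \mathbf{s} \in \mathcal{D}_k$ with $r_m = s_m = 1$, weighted by a local product $\prod_{p|rs} S_p(\mathbf{r},\mathbf{s})$ analogous to \eqref{eq:SpDef} but with $1/\phi([\mathbf{d},\mathbf{e}])$ replacing $1/[\mathbf{d},\mathbf{e}]$. Only the diagonal $r = s$ survives, and Lemma \ref{lmm:YDifference}(ii) replaces $y_\mathbf{s}$ by $y_\mathbf{r}$ with the error absorbed into the stated secondary $O(\#\mathcal{A}(x)(\log R)^{k-1} I_k(F))$ term, exactly as the corresponding error \eqref{eq:S1ErrorTerm} was bounded. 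The surviving main term is proportional to $\sum_{\mathbf{r} \in \mathcal{D}_k,\, r_m = 1} y_\mathbf{r}^2/\phi_\omega(r)$, and Lemma \ref{lmm:MultipleSummation} applied in $k-1$ variables, with the $t_m$-integral of $F$ appearing inside the square, evaluates this to $(\log R)^{k+1} J_k(F)$ times an Euler product.

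The principal obstacle is the singular-series bookkeeping. Two adjustments must be tracked simultaneously: the transition from the $\mathfrak{S}_{WB}(\mathcal{L})^2$ that arises naturally after substituting $y_\mathbf{r}$ to the $\mathfrak{S}_B(\mathcal{L})$ of the statement; and the mismatch between $\phi_L(q) = \phi(|a_m|q)/\phi(|a_m|)$ and $\phi(q)$, which differs at precisely the primes $p \mid a_m$ with $p \mid q$. A prime-by-prime analysis reveals that, after averaging over $(\mathbf{d},\mathbf{e})$, the mismatch at each $p \mid a_m$ with $p \nmid B$ contributes exactly the factor $(p-1)/p$, yielding the stated $\prod_{p \mid a_m,\, p \nmid B}(p-1)/p$; the remaining primes of $B$ are absorbed cleanly into the $\mathfrak{S}_{WB} \to \mathfrak{S}_B$ transition since the restriction $(q, B) = 1$ in \eqref{eq:Bdd3} ensures no conflicting contribution arises at such primes.
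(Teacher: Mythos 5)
Your high-level outline (split by residue classes mod $W$, CRT to get $\#\mathcal{P}_{L,\mathcal{A}}(x;q,a)$, Cauchy--Schwarz for the error $E_q^{(2)}$, and the eventual appearance of $J_k(F)$) matches the paper's structure, and your observation that $d_m=e_m=1$ is correct. However, there is a genuine gap in the main-term analysis that the paper resolves with machinery your sketch does not have.

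You write that after substituting \eqref{eq:YDef} the sum becomes $\sum y_{\mathbf{r}}y_{\mathbf{s}}$ over $\mathbf{r},\mathbf{s}\in\mathcal{D}_k$ with $r_m=s_m=1$ and that ``only the diagonal $r=s$ survives.'' Neither of these claims holds with the original $y_{\mathbf{r}}$. First, \eqref{eq:YDef} expresses $\lambda_{\mathbf{d}}$ as a sum over $\mathbf{r}$ with $\mathbf{d}\mid\mathbf{r}$; the constraint $d_m=1$ does \emph{not} force $r_m=1$. Second, even if one restricts $\mathbf{r}$, the local sums do not vanish off the diagonal: for a prime $p\mid r_j$ (some $j\ne m$) with $p\nmid s$, the factor contributed to $\sum_{\mathbf{d}\mid\mathbf{r},\,\mathbf{e}\mid\mathbf{s},\,d_m=e_m=1}\mu(d)\mu(e)de/\phi_L([\mathbf{d},\mathbf{e}])$ is
\begin{equation*}
1-\frac{p}{\phi_L(p)}=-\frac{1}{p-1}\qquad(p\nmid a_m),
\end{equation*}
not $0$. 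The original $y_{\mathbf{r}}$ diagonalizes $\sum\lambda_{\mathbf{d}}\lambda_{\mathbf{e}}/[\mathbf{d},\mathbf{e}]$, but not the modified form with $\phi_L([\mathbf{d},\mathbf{e}])$ in the denominator. Consequently your claimed main term $\sum_{r_m=1}y_{\mathbf{r}}^2/\phi_\omega(r)$ does not appear, and with the original $y_{\mathbf{r}}$ supported on $r_m=1$ (so $F$ is evaluated at $t_m=0$, not integrated over $t_m$) one would recover a quantity of the shape $I_{k-1}$, not $J_k(F)$.

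The paper's proof introduces \emph{new} dual variables $y_{\mathbf{r}}^{(m)}$ defined by $\lambda_{\mathbf{d}}'=\mu(d)\phi_L(d)\sum_{\mathbf{d}\mid\mathbf{r}}y_{\mathbf{r}}^{(m)}/\phi_\omega(r)$ (after first restricting the support to the set $\mathcal{D}_k'$ determined by $(d_j,a_jb_m-a_mb_j)=1$ for all $j\ne m$, a restriction that is forced because any $p\mid d_j$ with $p\mid a_jb_m-a_mb_j$ would imply $p\mid L_m(n)$). The extra factor $\phi_L(d)$ in this new relation is precisely what makes the off-diagonal local sums vanish, and the restriction to $\mathcal{D}_k'$ (with the enlarged moduli $W_j'$) is what makes the singular-series bookkeeping work out, including the $\prod_{p\mid a_m,\,p\nmid B}(p-1)/p$ factor which you gesture at only heuristically. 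Finally, the bridge from $y_{\mathbf{r}}^{(m)}$ back to the explicit test function -- the identity $y_{\mathbf{r}}^{(m)}\approx(\log R)\cdot\frac{\phi(a_mWB)W^{k-1}B^{k-1}\mathfrak{S}_{WB}(\mathcal{L})}{a_m\phi(WB)^k}\int_0^\infty F\,dt_m$, proved as Lemma \ref{lmm:YmExpression} -- is what puts the $t_m$-integral of $F$ inside the square and yields $J_k(F)$. Your sketch asserts this structure emerges but does not supply the mechanism. Without the new change of variables and Lemma \ref{lmm:YmExpression}, the argument stalls.
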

\begin{proof}
%
Again we split the sum into residue classes $n\equiv v_0\pmod{W}$. If $(\prod_{i=1}^kL_i(v_0),W)\ne 1$ then we have $w_n=0$, and so we restrict our attention to $v_0$ with $(\prod_{i=1}^kL_i(v_0),W)= 1$. We substitute the definition \eqref{eq:WnDef} of $w_n$, expand the square and swap order of summation. This gives
%
\begin{equation}
\sum_{\substack{n\in\mathcal{A}(x)\\ n\equiv v_0\pmod{W}}}\mathbf{1}_\mathcal{P}(L(n))w_n=\sum_{\mathbf{d},\mathbf{e}\in\mathcal{D}_k}\lambda_{\mathbf{d}}\lambda_{\mathbf{e}}\sum_{\substack{n\in\mathcal{A}(x)\\ n\equiv v_0\pmod{W}\\ [d_i,e_i]|L_i(n)\forall i}}\mathbf{1}_{\mathcal{P}}(L(n)).\label{eq:S2LambdaSum}
\end{equation}
We first show that there is no contribution to our sum from $\lambda_{\mathbf{d}}$ for which $(d_j,a_jb_m-a_mb_j)\ne 1$ for some $j\ne m$. If $p|d_j$ then the inner sum requires that $p|a_jn+b_j$. However, if we also have $p|a_jb_m-b_ja_m$ then this means $p|a_m n+b_m$ (since $(a_j,b_j)=1$ by admissibility of $\mathcal{L}$). Since there is no contribution to our sum unless $L(n)=L_m(n)=a_m n+b_m$ is a prime and since $d_j<R<L(n)$ by the support of $\lambda_\mathbf{d}$ and assumption of the Lemma, we see that there is no contribution from $\lambda_{\mathbf{d}}$ with $(d_j,a_jb_m-a_mb_j)\ne 1$.

Thus we may restrict the support of $\lambda_{\mathbf{d}}$ to $\mathcal{D}_k'$, defined by
\begin{align}
\mathcal{D}_k'&=\{\mathbf{d}\in\mathbb{R}^k:\mu^2(d)=1, (d_j,W_j')=1\forall j\},\qquad W_j'=\prod_{p|W_j(a_jb_m-a_mb_j)}p.
\end{align}
We write $\lambda_{\mathbf{d}}'$ for $\lambda_{\mathbf{d}}$ with this restricted support. We see from this that $p|W_j'/W_j$ iff $p\nmid a_m$ and $j$ was the chosen index for the residue class $-b_m\overline{a_m}\pmod{p}$. (For our fixed set of choices of residue classes given in Section \ref{sctn:InitialConsiderations}.)

We now observe that given $\mathbf{d},\mathbf{e}\in\mathcal{D}_k'$, the inner sum of \eqref{eq:S2LambdaSum} is empty unless $(d_ie_i,d_je_j)=1$ for all $i\ne j$ (since otherwise the divisibility conditions are incompatible). If $(d_ie_i,d_je_j)=1\forall i\ne j$, then we can combine the conditions by the Chinese remainder theorem. This shows the sum is $\#\mathcal{P}_{L,\mathcal{A}}(x;q,a)$ for $q=W[\mathbf{d},\mathbf{e}]$ and some $a$. We note $\#\mathcal{P}_{L,\mathcal{A}}(x;q,a)\ne 0$ iff $(L(a),q)=1$, which occurs iff $d_m=e_m=1$. For such a choice of $\mathbf{d},\mathbf{e}$, we write $\#\mathcal{P}_{L,\mathcal{A}}(x;q,a)=\#\mathcal{P}_{L,\mathcal{A}}(x)/\phi_{L}(q)+O(E_q^{(2)})$, where $E_{q}^{(2)}=\max_{(a,q)=1}|\#\mathcal{P}_{L,\mathcal{A}}(x;q,a)-\#\mathcal{P}_{L,\mathcal{A}}(x)/\phi_{L}(q)|$.

We treat the error term $E_q^{(2)}$ in the same manner as we treated $E_q^{(1)}$ in the proof of Proposition \ref{prpstn:S1}. We note that for all $\mathbf{d},\mathbf{e}\in\mathcal{D}_k'$ we have $(q,B)=1$, allowing us to use Proposition \ref{prpstn:MainProp} for the average of $E_q^{(2)}$. We also note that trivially $\#\mathcal{P}_{L,\mathcal{A}}(x;q,a)\ll \#\mathcal{A}(x;q,a)$, which gives us the bound $E_q^{(2)}\ll \#\mathcal{A}(x)/\phi_{L}(q)$. Thus the same argument shows that these error terms contribute $O(\#\mathcal{A}(x)W^{-1}(\log{x})^{-2k^2})$.

We now consider the main term, given by
\begin{equation}
\frac{\#\mathcal{P}_{L,\mathcal{A}}(x)}{\phi_L(W)}\sideset{}{'}\sum_{\substack{\mathbf{d},\mathbf{e}\in\mathcal{D}_k' \\ d_m=e_m=1}}\frac{\lambda'_{\mathbf{d}}\lambda'_{\mathbf{e}}}{\phi_L([\mathbf{d},\mathbf{e}])},\label{eq:S2MainTerm}
\end{equation}
where we recall $\sum'$ indicates the sum is restricted to $(d_ie_i,d_je_j)=1$ for all $i\ne j$.
We change variables to $y_{\mathbf{r}}^{(m)}$, satisfying
\begin{equation}
y_{\mathbf{r}}^{(m)}=\mu(r)\phi_\omega(r)\sum_{\mathbf{r}|\mathbf{d},d_m=1}\frac{\lambda'_{\mathbf{d}}}{\phi_L(d)},\qquad
\lambda_{\mathbf{d}}'=\mu(d)\phi_L(d)\sum_{\mathbf{d}|\mathbf{r}}\frac{y^{(m)}_{\mathbf{r}}}{\phi_\omega(r)}.\label{eq:YmDef}
\end{equation}
We see from \eqref{eq:YmDef} that $y_{\mathbf{r}}$ are supported on $\mathbf{r}\in\mathcal{D}_k'$ with $r_m=1$. Substituting our expression \eqref{eq:YmDef} for $\lambda'_{\mathbf{d}}$ into our main term \eqref{eq:S2MainTerm} gives
\begin{equation}
\frac{\#\mathcal{P}_{L,\mathcal{A}}(x)}{\phi_L(W)}\sideset{}{'}\sum_{\substack{\mathbf{d},\mathbf{e}\\ d_m=e_m=1}}\frac{\lambda'_{\mathbf{d}}\lambda'_{\mathbf{e}}}{\phi_L([\mathbf{d},\mathbf{e}])}=\frac{\#\mathcal{P}_{L,\mathcal{A}}(x)}{\phi_L(W)}\sum_{\mathbf{r},\mathbf{s}}\frac{y_{\mathbf{r}}^{(m)} y_{\mathbf{s}}^{(m)}}{\phi_\omega(r)\phi_\omega(s)}\prod_{p|rs}S_p^{(m)}(\mathbf{r},\mathbf{s}),\label{eq:S2MainTerm1}
\end{equation}
where now, if $\mathbf{r}$ and $\mathbf{s}$ are such that $y^{(m)}_{\mathbf{r}}y^{(m)}_{\mathbf{s}}\ne 0$ (so $r_m=s_m=1$) and $p|rs$, we have
\begin{equation}
S_p^{(m)}(\mathbf{r},\mathbf{s})=\sideset{}{'}\sum_{\substack{\mathbf{d}|\mathbf{r},\mathbf{e}|\mathbf{s}\\ d_i,e_i|p\forall i\\d_m=e_m=1}}\frac{\mu(d) \mu(e) \phi_L(d) \phi_L(e)}{ \phi_L([\mathbf{d},\mathbf{e}]) }
=
\begin{cases}
p-2,\qquad &p|(\mathbf{r},\mathbf{s}), p\nmid a_m\\
p-1, &p|(\mathbf{r},\mathbf{s}), p|a_m\\
-1, & p|r,p|s, p\nmid (\mathbf{r},\mathbf{s}),\\
0, &(p|r\text{ and }p\nmid s)\text{ or }(p|s\text{ and }p\nmid r),
\end{cases}
\end{equation}
so again we may restrict to $r=s$. We use the following lemma to relate $y^{(m)}_\mathbf{r}$ to $y_{\mathbf{r}}$. 
\begin{lmm}\label{lmm:YmExpression}
Let $\mathbf{r}\in\mathcal{D}_k'$ with $r_m=1$, and let $t_i=\log{r_i}/\log{R}$ for $i\ne m$. Then we have
\begin{align*}y_{\mathbf{r}}^{(m)}&=\log{R}\frac{\phi(a_m W B) W^{k-1}B^{k-1}\mathfrak{S}_{WB}(\mathcal{L})}{a_m\phi(W B)^{k}}\int_0^\infty H(t_1,\dots,t_k)dt_m
\end{align*}
where
\begin{align*}
H(t_1,\dots,t_k)=F(t_1,\dots,t_k)+O\Bigl(\frac{T_k(\log\log{R})^2}{\log{R}} F_2 (t_1,\dots,t_k)\Bigr).
\end{align*}
\end{lmm}
We first complete the proof of Proposition \ref{prpstn:S2}, and then establish the lemma. Given $\mathbf{r},\mathbf{s}\in\mathcal{D}_k'$ with $r_m=s_m=1$ and $r=s$, let $A=A(\mathbf{r},\mathbf{s})$ be the product of primes dividing $r$ but not $(\mathbf{r},\mathbf{s})$. Analogously to Lemma \ref{lmm:YDifference}, we have (for $A>1$)
\begin{align}
y_{\mathbf{r}}^{(m)}&=y_{\mathbf{s}}^{(m)}+O\Bigl( \frac{T_k}{k} (Y_{\mathbf{r}}+Y_{\mathbf{s}})\frac{\phi(a_m W B)}{a_m W B}(\log{A}+(\log\log{R})^2)\Bigr)\nonumber\\
&=y_{\mathbf{s}}^{(m)}+O\Bigl(\frac{T_k}{k} (Y_{\mathbf{r}}+Y_{\mathbf{s}})\frac{\phi(a_m W B)}{a_m W B}(\log{A})(\log\log{R})^2\Bigr).
\end{align}
Substituting this into our main term \eqref{eq:S2MainTerm1}, we are left to estimate
\begin{align}
\sum_{\substack{\mathbf{r},\mathbf{s}\in\mathcal{D}'_k\\ r_m=s_m=1\\ r=s}}\frac{y^{(m)}_{\mathbf{r}}}{\phi_\omega(r)^2}\prod_{p|rs}S_p^{(m)}(\mathbf{r},\mathbf{s})\Bigl(y^{(m)}_{\mathbf{r}}+O\Bigl(\frac{T_k}{k}(Y_\mathbf{r}+Y_{\mathbf{s}})\frac{\phi(a_m W B)}{a_m W B}(\log{A})(\log\log{R})\Bigr)\Bigr).\label{eq:S2MainTemp}
\end{align}
We note that for $r=s$ the value of $\prod_{p|rs}S_p^{(m)}(\mathbf{r},\mathbf{s})$ depends only on $r$ and $A$. Substituting this value for $S_p^{(m)}(\mathbf{r},\mathbf{s})$ gives a main term
\begin{equation}
\sum_{\substack{\mathbf{r}\in\mathcal{D}'_k\\ r_m=1}}\frac{(y_{\mathbf{r}}^{(m)})^2}{\phi_\omega(r)^2}\Bigl(\prod_{p|r}(\phi_L(p)-1)\Bigr)\sum_{A|r}\Bigl(\prod_{p|A}\frac{-1}{\phi_L(p)-1}\Bigr)\sum_{\substack{\mathbf{s}\in\mathcal{D}'_k\\ A(\mathbf{r},\mathbf{s})=A}}1,\label{eq:S2MainTerm2}
\end{equation}
and (using $y^{(m)}_{\mathbf{r}}\le Y_{\mathbf{r}}(\log{R})/k$ and $Y_{\mathbf{r}}Y_{\mathbf{s}}\le Y_{\mathbf{r}}^2+Y_{\mathbf{s}}^2$) an error term of size
\begin{align}
\ll \frac{T_k\phi(a_m W B)^2\log{R}}{k^2 a_m^2B^2W^2}\sum_{\substack{\mathbf{r}\in\mathcal{D}'_k\\ r_m=1}}\frac{Y_{\mathbf{r}}^2\prod_{p|r}(\phi_L(p)-1)}{\phi_\omega(r)^2}\sum_{A|r}\frac{\log{A}}{\prod_{p|A}(\phi_L(p)-1)}\sum_{\substack{\mathbf{s}\in\mathcal{D}'_k\\ A(\mathbf{r},\mathbf{s})=A}}(\log\log{R})^2.
\label{eq:S2Error}
\end{align}
We first estimate the inner sum over $\mathbf{s}$ which occurs in both terms. We fix a choice of $\mathbf{r}\in\mathcal{D}'_k$ and $A=A(\mathbf{r},\mathbf{s})$ with $A|r$. For each prime $p|A$, we count how many components of $\mathbf{s}$ can be a multiple of $p$, subject to the constraints that $p\nmid (s_i,r_i)$ and $p\nmid (s_i,W_i')$ for all $i$. If $p|A, p\nmid a_m$ then there are $\omega(p)-2$ possible choices of which component of $\mathbf{s}$ can be a multiple of $p$ (there are $\omega(p)-1$ indices $j\ne m$ for which $p\nmid W_j'$, but for one of these indices $p|r_j$). If $p|A$ and $p|a_m$, then instead there are $\omega(p)-1$ choices (since there are $\omega(p)$ indices $j\ne m$ for which $p\nmid W_j'$, but for one of these indices $p|r_j$). Thus we have
\begin{align}
\sum_{\substack{\mathbf{s}\in\mathcal{D}'_k\\ A(\mathbf{r},\mathbf{s})=A}}1=\prod_{p|A,p\nmid a_m}(\omega(p)-2)\prod_{p|A,p|a_m}(\omega(p)-1).\label{eq:S2sSum}
\end{align}
We now consider the error term \eqref{eq:S2Error}. We follow an analogous argument to that in the proof of Proposition \ref{prpstn:S1}. Substituting our expression \eqref{eq:S2sSum} for the inner sum, and crudely bounding the multiplicative functions gives a bound
\begin{align}
&\ll \frac{T_k\phi(a_m W B)^2(\log{R})(\log\log{R})^2}{k^2 a_m^2B^2W^2}\sum_{(A,WB)=1}\log{A}\frac{\omega(A)}{\phi_\omega(A)^2}\sum_{\substack{\mathbf{r}\in\mathcal{D}'_k\\ r_m=1\\ A|r}}\frac{Y_{\mathbf{r}}^2\phi(r/A)}{\phi_\omega(r/A)^2}.
\end{align}
We let $\mathbf{r}'$ be given by $r'_i=r_i/(r_i,A)$ and see $Y_{\mathbf{r}'}\ge Y_{\mathbf{r}}$. Moreover, we see that there are $O(\omega(A))$ choices of $\mathbf{r}$ given $\mathbf{r}'$. Therefore we obtain the bound
\begin{align}
\sum_{(A,WB)=1}\log{A}\frac{\omega(A)}{\phi_\omega(A)^2}\sum_{\substack{\mathbf{r}\in\mathcal{D}'_k\\ r_m=1\\ A|r}}\frac{Y_{\mathbf{r}}^2\phi(r/A)}{\phi_\omega(r/A)^2}&\ll \Bigl(\sum_{(A,WB)=1}\log{A}\frac{\omega(A)^2}{\phi_\omega(A)^2}\Bigr)\sum_{\substack{\mathbf{r}'\in\mathcal{D}_k\\ r'_m=1}}\frac{Y_{\mathbf{r}'}^2\phi(r')}{\phi_\omega(r')^2}.\label{eq:S2TempBound}
\end{align}
Here dropped the requirement that $(r',A)=1$ for an upper bound. We substitute $\log{A}=\sum_{p|A}\log{p}$, $A=pA'$, and swap the order of summation. This shows the right hand side of \eqref{eq:S2TempBound} is
\begin{align}
&\ll 
\Bigl(\sum_{p>2k^2}\frac{\omega(p)^2\log{p}}{\phi_{\omega}(p)^2}\Bigr)
\Bigl(\sum_{(A',WB)=1}\frac{\omega(A')^2}{\phi_{\omega}(A')^2}\Bigr)
\Bigl(\sum_{\substack{\mathbf{r}'\in\mathcal{D}'_k\\r_m=1}}\frac{\phi(r') Y_{\mathbf{r}'}^2}{\phi_\omega(r')^2}\Bigr).\label{eq:S2Temp2}
\end{align}
The first two sums are seen to be $O(1)$ since they only involve primes $p>2k^2$. The final sum we estimate using Lemma \ref{lmm:MultipleSummation}. This gives a bound for \eqref{eq:S2Temp2} of
\begin{align}
\ll &\frac{ W^{k+1}B^{k+1} \mathfrak{S}_{WB}(\mathcal{L})^2 (\log{R})^{k-1}}{ \phi(WB)^{k+1}}\prod_{p\nmid WB}\Bigl(1+\frac{\omega(p)-1}{p+O(k)}\Bigr)\Bigl(1-\frac{1}{p}\Bigr)^{k-1}\nonumber\\
&\qquad\times \idotsint\limits_{t_1,\dots,t_k\ge 0, t_m=0}F_2(t_1,\dots,t_k)^2\prod_{i\ne m}dt_i.
\end{align}
We see that the product is $O(\mathfrak{S}_{WB}(\mathcal{L})^{-1})$ analogously to \eqref{eq:S1ProductBound}. We also have, from the definition \eqref{eq:F1F2Def} of $F_2$
\begin{equation}
\idotsint\limits_{t_1,\dots,t_k\ge 0, t_m=0}F_2(t_1,\dots,t_k)^2\prod_{i\ne m}dt_i\le k^2\Bigl(\int_0^\infty\frac{\psi(t/U_k)^2dt}{(1+T_kt)^2}\Bigr)^{k-2}\Bigl(\int_0^\infty\frac{\psi(t/2)^2dt}{(1+T_kt)^2}\Bigr)\ll k^2T_k^2 J_k(F).
\end{equation}
Putting this together, the contribution of the error term to \eqref{eq:S2MainTemp} is
\begin{align}
&\ll \frac{ T_k^3\phi(a_m W B)^2 W^{k-1}B^{k-1} \mathfrak{S}_{WB}(\mathcal{L}) (\log{R})^k(\log\log{R})^2}{ a_m^2\phi(WB)^{k+1} }J_k(F),
\label{eq:S2ErrorSum}
\end{align}
which contributes only to the error term in the statement of the Lemma.

We now consider the main term in \eqref{eq:S2MainTemp}, given by \eqref{eq:S2MainTerm2}. Substituting our expression \eqref{eq:S2sSum} for the inner sum, and evaluating the sum over $A$ gives
\begin{align}
\sum_{\substack{\mathbf{r}\in\mathcal{D}_k\\ r_m=1}}\frac{(y_{\mathbf{r}}^{(m)})^2}{\phi_\omega(r)^2}\Bigl(\prod_{p|r}(\phi_L(p)-1)\Bigr)\sum_{A|r}\Bigl(\prod_{p|A}\frac{-1}{\phi_L(p)-1}\Bigr)\sum_{\substack{\mathbf{s}\in\mathcal{D}'_k\\ A(\mathbf{r},\mathbf{s})=A}}1=\sum_{\substack{\mathbf{r}\in\mathcal{D}_k'\\ r_m=1}}\frac{(y_{\mathbf{r}}^{(m)})^2}{\phi_\omega(r)}.
\end{align}
We evaluate this sum using Lemma \ref{lmm:MultipleSummation}. This gives
\begin{align}
\sum_{\substack{\mathbf{r}\in\mathcal{D}_k'\\ r_m=1}}\frac{(y_{\mathbf{r}}^{(m)})^2}{\phi_\omega(r)}
&=\prod_{p\nmid a_m B W}\Bigl(1+\frac{\omega(p)-1}{\phi_\omega(p)}\Bigr)\Bigl(1-\frac{1}{p}\Bigr)^{k-1}
\hspace{-5pt}\prod_{p| a_m, p\nmid W B}\hspace{0pt minus 1fil}\Bigl(1+\frac{\omega(p)}{\phi_\omega(p)}\Bigr)\Bigl(1-\frac{1}{p}\Bigr)^{k-1}\nonumber\\
&\times (\log{R})^{k+1}\frac{\phi(a_m W B)^2 W^{k-1}B^{k-1} \mathfrak{S}_{WB}(\mathcal{L})^2}{a_m^2 \phi(WB)^{k+1}}\Bigl(J_k(H)+O\Bigl(\frac{k T_k^2(\log\log{R})^2}{\log{R}}J_k( F_1 )\Bigr)\Bigr).\label{eq:S2FirstBnd}
\end{align}
By Lemma \ref{lmm:IkJk}, we have $J_k(F_1) \gg J_k(F)$. From the definition of $H$, we have
\begin{align}
J_k(H)=J_k\Bigl(F+O\Bigl(\frac{T_k\log\log{R}}{\log{R}}F_2\Bigr)\Bigr)&=J_k(F)+O\Bigl(\frac{T_k\log\log{R}}{\log{R}}J_k(F_2)\Bigr)\nonumber\\
&=J_k(F)\Bigl(1+O\Bigl(\frac{k^2T_k\log\log{R}}{\log{R}}\Bigr)\Bigr).
\end{align} 
We recall $k\le (\log{x})^{1/5}$ and $T_k=k\log{k}$, so the errors appearing are $o((\log{x})^{-1/10})$. Therefore, simplifying the products in \eqref{eq:S2FirstBnd} gives
\begin{equation}
\sum_{\substack{\mathbf{r}\in\mathcal{D}_k'\\ r_m=1}}\frac{(y_{\mathbf{r}}^{(m)})^2}{\phi_\omega(r)}
=\Bigl(1+\Bigl(\frac{1}{(\log{x})^{1/10}}\Bigr)\Bigr)(\log{R})^{k+1}\frac{W^{k-1}B^{k-1}\mathfrak{S}_{WB}(\mathcal{L})}{\phi(WB)^{k-1}}J_k(F)
\prod_{p|a_m, p\nmid W B}\frac{p-1}{p}.
\end{equation}
Thus, putting everything together, we have
\begin{align}
\sum_{\substack{n\in\mathcal{A}(x)\\ n\equiv v_0\pmod{W}}}&\mathbf{1}_\mathcal{P}(L(n))w_n=\frac{\#\mathcal{P}_{L,\mathcal{A}}(x)}{\phi_L(W)}\sideset{}{'}\sum_{\substack{\mathbf{d},\mathbf{e}\\ d_m=e_m=1}}\frac{\lambda'_{\mathbf{d}}\lambda'_{\mathbf{e}}}{\phi_L([\mathbf{d},\mathbf{e}])}+O\Bigl(\frac{\#\mathcal{A}(x)}{W(\log{x})^{2k^2}}\Bigr)\nonumber\\
&=\Bigl(1+O\Bigl(\frac{1}{(\log{x})^{1/10}}\Bigr)\Bigr)(\log{R})^{k+1}\frac{W^{k-1}B^{k-1}\mathfrak{S}_{WB}(\mathcal{L})\#\mathcal{P}_{L,\mathcal{A}}(x)}{\phi(WB)^{k-1}\phi_L(W)}J_k(F)\prod_{p|a_m,p\nmid  W B}\frac{p-1}{p}\nonumber\\
&\qquad+O\Bigl(\frac{\#\mathcal{A}(x)}{W(\log{x})^{2k^2}}\Bigr).
\end{align}
Summing over the $\phi_\omega(W)$ residue classes $v_0\pmod{W}$ then gives the result (recalling $(W,B)=1$).
\end{proof}
We now return to prove Lemma \ref{lmm:YmExpression}.
\begin{proof}[Proof of Lemma \ref{lmm:YmExpression}]
We substitute our expression \eqref{eq:YDef} for $\lambda_{\mathbf{d}}$ into the definition \eqref{eq:YmDef} of $y^{(m)}_\mathbf{r}$. For $r_m=1$ and $\mathbf{r}\in\mathcal{D}'_k$, we obtain
\begin{align}
y_{\mathbf{r}}^{(m)}
&=\mu(r)\phi_\omega(r)\sum_{\substack{\mathbf{d}\in\mathcal{D}'_k\\ \mathbf{r}|\mathbf{d},d_m=1}}\frac{\lambda_{\mathbf{d}}}{\phi_L(d)}
=\mu(r)\phi_\omega(r)\sum_{\mathbf{r}|\mathbf{e}}\frac{y_\mathbf{e}}{\phi_\omega(e)}
\sum_{\substack{\mathbf{d}\in\mathcal{D}_k'\\ \mathbf{r}|\mathbf{d},\mathbf{d}|\mathbf{e},d_m=1}}\frac{\mu(d)d}{\phi_L(d)}\nonumber\\
&=\frac{r\phi_\omega(r)}{\phi_L(r)}
\sum_{\mathbf{r}|\mathbf{e}}\frac{y_\mathbf{e}}{\phi_\omega(e)}
\prod_{p|e/r}S_p^{' (m)}(\mathbf{e},\mathbf{r}),
\label{eq:YmExpression}
\end{align}
where if $p|e_m$ then $S_p^{' (m)}(\mathbf{e},\mathbf{r})=1$ and if $p|e_j/r_j$ with $j\ne m$, we have
\begin{equation}
S_p^{' (m)}(\mathbf{e},\mathbf{r})=
\sum_{\substack{\mathbf{d}\in\mathcal{D}_k'\\ d_j|(e_j/r_j,p), d_m=1}}\frac{\mu(d)d}{\phi_L(d)}=
\begin{cases}
-1/(p-1), \quad &p\nmid a_mW'_j,\\
0, &p|a_m, p\nmid W_j',\\
1, &p|W'_j/W_j.
\end{cases}
\label{eq:Sm'Expression}
\end{equation}
(Since $\mathbf{e}\in\mathcal{D}_k$, we have $(e_j,W_j)=1$ and so if $p|e_j/r_j$ we only need consider $p\nmid W_j$.)

We let $e_j=r_js_jt_j$ for each $j\ne m$, where $s_j$ is the product of primes dividing $e_j/r_j$ but not $W_j'$, and $t_j$ is the product of primes dividing both $e_j/r_j$ and $W_j'/W_j$. We put $s_m=t_m=1$, and consider $e_m$ separately.

We can restrict to the case when $(s_j,a_m)=1$ for all $j$, since otherwise the product of $S_p^{' (m)}(\mathbf{e},\mathbf{r})$ vanishes. For $\mathbf{e}\in\mathcal{D}_k$, the product in \eqref{eq:YmExpression} is then $\mu(s)/\phi(s)$ by \eqref{eq:Sm'Expression}. Since $(a_m,W_j'/W_j)=1$ for all $j$, we can also restrict to $(t_j,a_m)=1$ for all $j$. (If $p|W_j'/W_j$ then $p|a_mb_j-a_jb_m$, so if $p|a_m$ and $p|W_j'/W_j$, then $p|a_j$ and hence $p|W_j$, meaning $p\nmid W_j'/W_j$).

We let $\mathbf{r}'=(r_1,\dots,r_{m-1},e_m,r_{m+1},\dots,r_k)$. By Lemma \ref{lmm:YDifference}, we have
\begin{equation}
y_{\mathbf{e}}=y_{\mathbf{r}'}+O\Bigl(T_k Y_{\mathbf{r}'}\frac{\log{st}}{\log{R}}\Bigr).
\end{equation}
Substituting this into \eqref{eq:YmExpression} gives
\begin{equation}
y_{\mathbf{r}}^{(m)}=\frac{r}{\phi_L(r)}\sum_{e_m}\frac{y_{\mathbf{r}'}}{\phi_\omega(e_m)}\sum_{\substack{\mathbf{s},\mathbf{t}}}\frac{\mu(s)}{\phi(s)\phi_{\omega}(st)}
+O\Bigl(\frac{T_k r}{\phi_L(r)\log{R}}
\sum_{\substack{e_m\\ \mathbf{r}'\in\mathcal{D}_k}}\frac{Y_{\mathbf{r}'}}{\phi_\omega(e_m)}\sum_{\substack{\mathbf{s},\mathbf{t}}}\frac{\log{st}}{\phi(s)\phi_{\omega}(st)}\Bigr),\label{eq:YmExpression2}
\end{equation}
where the sum is over $\mathbf{s}\in\mathcal{D}_k',\mathbf{t}\in\mathcal{D}_k$ subject to $s_m=t_m=1$, $(s,t)=(st,re_ma_m)=1$, and $t_j|W_j'/W_j$.

We first estimate the error term from \eqref{eq:YmExpression2}. We have $\log{st}\ll s^{1/2}(1+\log{t})$, and we drop the requirement that $(s,t)=1$. The sum over $s$ then factorizes as an Euler product, which can be seen to be $O(1)$ since there are $O(\omega(u))$ choices of $\mathbf{s}$ with $s=u$, and we only consider primes $p>2k^2$. We are summing over square-free $t|\Delta=\prod_{i=1}^k(a_mb_i-a_ib_m)$ with $(t,WBre_ma_m)=1$, and for every such $t$ there is at most one possible $\mathbf{t}$ (since for every prime $p|t$ with $p|W_m$ there is a unique index $j$ such that $p|W_j'/W_j$, and if $p\nmid W_m$ there is no such index). Thus the sum over $\mathbf{t}$ contributes at most
\begin{align}
\sum_{t\in\mathcal{D}_k:t|\Delta}\frac{1+\sum_{p|t}\log{p}}{\phi_\omega(t)}&\ll \Bigl(1+\sum_{p>2k^2:p|\Delta}\frac{\log{p}}{p}\Bigr)\prod_{p>2k^2:p|\Delta}\Bigl(1+\frac{1}{\phi_\omega(p)}\Bigr)\nonumber\\
&\ll (\log\log{\Delta})^2\ll (\log\log{R})^2,
\end{align}
since both sum and product are largest if $\Delta$ is composed of primes $\ll \log{\Delta}$, and $\Delta\ll x^{O(k)}$.

Thus, relaxing the constraint $(e_m,rW_m)=1$ to $(e_m,a_mWBr)=1$, and using Lemma \ref{lmm:MultipleSummation} to estimate the sum over $e_m$, we see the error contributes a total
\begin{align}
&\ll \frac{T_k (\log\log{R})^2}{\log{R}}\frac{r}{\phi_L(r)}\sum_{(e_m,a_mWB r)=1}\frac{Y_{\mathbf{r}'}}{\phi_\omega(e_m)}\nonumber\\
&\ll \frac{T_k(\log\log{R})^2\phi(a_m W B)W^{k-1}B^{k-1}\mathfrak{S}_{WB}(\mathcal{L})}{a_m\phi(WB)^k}\int_0^\infty  F_2 (t_1,\dots,t_k)dt_m.\label{eq:YmError}
\end{align}
We now return to the main term from \eqref{eq:YmExpression2}. We first consider the inner sum, which by multiplicativity we can rewrite as a product
\begin{equation}
\sideset{}{^*}\sum_{\mathbf{s},\mathbf{t}}\frac{\mu(s)}{\phi(s)\phi_{\omega}(st)}=\prod_{p}\sideset{}{^*}\sum_{\substack{\mathbf{s},\mathbf{t}\\ s_i|p, t_i|p\forall i}}\frac{\mu(s)}{\phi(s)\phi_{\omega}(st)},
\end{equation}
where the asterisk indicates that sums are subject to the additional constraints that $s_m=t_m=1$, $(s,t)=1$ and that $(s_i,W_i're_ma_m)=1$, $(t_i,W_ire_ma_m)=1$, and $t_i|W_i'/W_i$ for all $1\le i\le k$. Since the summand only depends on $s$ and $t$ we can evaluate it by counting how many pairs $\mathbf{s},\mathbf{t}$ correspond to a given choice of $s,t$.

If $p|WBre_ma_m$ then no component of $\mathbf{s}$ or $\mathbf{t}$ can be a multiple of $p$. If $p\nmid WBre_ma_m$ then there are $\omega(p)-1$ components of $\mathbf{s}$ which can be a multiple of $p$ (corresponding to the indices for all residue classes chosen mod $p$ except for the index corresponding to $-b_m\overline{a_m}$). If $p\nmid W_mre_m$ then no components of $\mathbf{t}$ can be a multiple of $p$ ($p\nmid W_m$ means $m$ was the chosen index for the residue class $-b_m\overline{a}_m\pmod{p}$, so $p\nmid W_j'/W_j$ for any $j$). If $p|W_m,p\nmid WBra_m$ then exactly one component of $\mathbf{t}$ can be a multiple of $p$ ($t_j$ can be a multiple of $p$ if $j$ was the chosen index for the residue class $-b_m\overline{a}_m\pmod{p}$, and this occurs for the unique $j$ such that $p|W_j'/W_j$). Finally, since $(s,t)=1$, no component of $\mathbf{s}$ can be a multiple of $p$ if $t$ is a multiple of $p$. Putting this together, we obtain (since $(e_m,rW_m)=1$)
\begin{align}
\sideset{}{^*}\sum_{\mathbf{s},\mathbf{t}}\frac{\mu(s)}{\phi(s)\phi_\omega(st)}&=\prod_{p|W_m,p\nmid W  B r a_m}\Bigl(1-\frac{\omega(p)-1}{\phi(p)\phi_\omega(p)}+\frac{1}{\phi_\omega(p)}\Bigr)\prod_{p\nmid W_m re_m}\Bigl(1-\frac{\omega(p)-1}{\phi(p)\phi_\omega(p)}\Bigr)\nonumber\\
&=\prod_{p|W_m,p\nmid W B r a_m}\frac{p}{p-1}\prod_{p\nmid W_m r}\Bigl(\frac{p}{p-1}-\frac{1}{\phi_\omega(p)}\Bigr)\prod_{p|e_m}\Bigl(\frac{p}{p-1}-\frac{1}{\phi_\omega(p)}\Bigr)^{-1}.\label{eq:stSum}
\end{align}
Now, using Lemma \ref{lmm:PartialSummation}, we estimate the summation over $e_m$. This gives
\begin{align}
&\sum_{(e_m,rW_m)=1}\frac{y_{\mathbf{r}'}}{\phi_\omega(e_m)}\prod_{p|e_m}\Bigl(\frac{p}{p-1}-\frac{1}{\phi_\omega(p)}\Bigr)^{-1}\nonumber\\
&=\log{R}\frac{\mathfrak{S}_{WB}(\mathcal{L})W^kB^k}{\phi(WB)^k}\prod_{p|rW_m}\Bigl(1-\frac{1}{p}\Bigr)\prod_{p\nmid rW_m}\Bigl(\frac{p}{p-1}-\frac{1}{\phi_\omega(p)}\Bigr)^{-1}\int_0^\infty H(t_1,\dots, t_k)dt_m,
\label{eq:emSummation}
\end{align}
where we have written $r_i=R^{t_i}$ for $i\ne m$ to simplify notation, and where
\begin{align*}
H(u_1,\dots,u_k)=F(u_1,\dots,u_k)+O\Bigl(\frac{T_k(\log\log{R})^2}{\log{R}} F_2 (u_1,\dots,u_k)\Bigr).
\end{align*}
We have added an additional factor of $\log\log{R}$ into the error term for $H$ so we can absorb \eqref{eq:YmError} into the error term.

Thus, combining \eqref{eq:stSum} and \eqref{eq:emSummation} gives
\begin{align}
&\frac{r}{\phi_L(r)}\sum_{e_m}\frac{y_{\mathbf{r}'}}{\phi_\omega(e_m)}\sum_{\mathbf{s},\mathbf{t}}\frac{\mu(s)}{\phi(s)\phi_\omega(st)}\nonumber\\
&=\log{R}\frac{W^kB^k\mathfrak{S}_{WB}(\mathcal{L})}{\phi(WB)^{k}}\frac{r}{\phi_L(r)}\prod_{p|r}\Bigl(1-\frac{1}{p}\Bigr)\prod_{\substack{p|WBa_m\\ p\nmid r}}\Bigl(1-\frac{1}{p}\Bigr)\int_0^\infty H(t_1,\dots, t_k)dt_m\nonumber\\
&=\log{R}\frac{\phi(a_m W B)W^{k-1}B^{k-1}\mathfrak{S}_{WB}(\mathcal{L})}{a_m\phi(WB)^{k}}\int_0^\infty H(t_1,\dots, t_k)dt_m.\label{eq:YmMain}
\end{align}
Here we have used the fact $\mathbf{r}\in\mathcal{D}_k'$, and so $(r,WB)=1$. Combining \eqref{eq:YmError} and \eqref{eq:YmMain} gives the result.
\end{proof}
\begin{prpstn}\label{prpstn:S3}
Let $w_n$ be as described in Section \ref{sctn:InitialConsiderations}. Given $D,\xi$ satisfying $D\le x^{\alpha}$, and $k(\log\log{x})^2/(\log{x})\le \xi\le \theta/10$ let
\[\mathcal{S}(\xi;D)=\{n\in\mathbb{N}:p|n\implies (p> x^\xi\text{ or }p|D)\}.\]
For $L=a_0n+b_0\notin\mathcal{L}$, with $|a_0|,\,|b_0|\le x^{\alpha}$ and $\Delta_L\ne 0$, we have
\[\sum_{\substack{n\in\mathcal{A}(x)}}\mathbf{1}_{\mathcal{S}(\xi;D)}(L(n))w_n\ll \xi^{-1} \frac{\Delta_L}{\phi(\Delta_L)}\frac{D}{\phi(D)}\frac{B^{k}}{\phi(B)^{k}}\mathfrak{S}_B(\mathcal{L})\#\mathcal{A}(x)(\log{R})^{k-1}I_{k}(F),\]
where
\[\Delta_L=|a_0|\prod_{i=1}^k|a_jb_0-a_0b_j|.\]
The implied constant depends only on $\theta,\alpha$ and the implied constants from \eqref{eq:Bdd1} and \eqref{eq:Bdd2}.
\end{prpstn}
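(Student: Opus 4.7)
My plan is to adapt the proof of Proposition \ref{prpstn:S1} by upper-bounding the indicator $\mathbf{1}_{\mathcal{S}(\xi;D)}(L(n))$ via a one-dimensional sieve and then running the same change-of-variables computation, tracking the additional local factors that appear.

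First I would split $n$ into residue classes $v_0\pmod{W}$ with $(\prod_i L_i(v_0),W)=1$ (else $w_n=0$), expand $w_n=(\sum_{d_i|L_i(n)\,\forall i}\lambda_\mathbf{d})^2$ using \eqref{eq:WnDef}, and interchange summation. After the usual support restrictions that force $(d_ie_i,d_je_j)=1$ for $i\ne j$, the inner sum counts $n\in\mathcal{A}(x;q,a)$ with $q=W[\mathbf{d},\mathbf{e}]<R^2W<x^\theta$, subject to the extra condition that $L(n)$ be coprime to $P=\prod_{p\le x^\xi,\, p\nmid D}p$.

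Next I would apply the Fundamental Lemma of the upper-bound sieve in its one-dimensional form to this inner sum. The level of distribution $x^\theta$ is provided by \eqref{eq:Bdd1}, and the sieve error terms accumulate a $\tau_{3k}$-like divisor function that is absorbed by the $(\log x)^{-100k^2}$ saving just as the $E_q^{(1)}$ errors were in Proposition \ref{prpstn:S1}. Combined with Mertens' theorem, this gives
\[
\#\{n\in\mathcal{A}(x;q,a):(L(n),P)=1\}\ll \frac{\#\mathcal{A}(x)}{q\,\xi\log x}\prod_{\substack{p\le x^\xi\\ p\mid Da_0q}}\frac{p}{p-1}.
\]
Substituting this into the double sum over $\mathbf{d},\mathbf{e}$ and changing variables to $y_\mathbf{r}$ exactly as in \eqref{eq:S1YrYsSum} yields a sum over $\mathbf{r},\mathbf{s}\in\mathcal{D}_k$ that differs from the one in Proposition \ref{prpstn:S1} only by an Euler product involving factors $p/(p-1)$ at primes $p\mid Dra_0$ with $p\le x^\xi$. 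Invoking Lemma \ref{lmm:MultipleSummation} on the resulting sum produces $(\log R)^{k-1}I_k(F)$, with one factor of $\log R$ absorbed into the external $\xi^{-1}$, together with a singular series that decomposes as (i) $\prod_{p\mid D,\,p\nmid WB}p/(p-1)$, yielding $D/\phi(D)$ up to a bounded correction, and (ii) a product over primes dividing $a_0\prod_i(a_ib_0-a_0b_i)$, yielding $\Delta_L/\phi(\Delta_L)$.

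The main obstacle is precisely identifying the $\Delta_L/\phi(\Delta_L)$ factor. At each prime $p\mid\Delta_L$ (so either $p\mid a_0$, or $p\mid a_ib_0-a_0b_i$ for some $i$), the residue class modulo $p$ on which $L$ vanishes coincides with one of the $\omega(p)$ classes on which $\prod_i L_i$ vanishes. Consequently the sieve at such a $p$ becomes essentially trivial: the condition $p\nmid L(n)$ is either implied by the support restriction of $w_n$ forcing $p\nmid L_j(n)$ for the colliding index $j$, or is incompatible with that restriction so the contributing fibre is empty. Either way the expected $(1-1/p)$ sieve saving at $p\mid\Delta_L$ is absent, contributing $p/(p-1)$ per prime. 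The bookkeeping at these primes is analogous to the $p\mid a_m$ versus $p\nmid a_m$ split performed in Proposition \ref{prpstn:S2}, and once it is executed carefully the remainder of the calculation—applying Lemma \ref{lmm:YDifference} to handle variations of $y_\mathbf{r}$ and Lemma \ref{lmm:MultipleSummation} to evaluate the remaining multiple sum, while bounding error terms in the style of the $S_1$ computation—follows the template of Proposition \ref{prpstn:S1} verbatim.
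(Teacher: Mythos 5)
Your plan diverges from the paper's at the crucial step, and the divergence introduces a genuine gap. The paper majorizes $\mathbf{1}_{\mathcal{S}(\xi;D)}(L(n))$ by a Selberg-sieve square, $\tilde{\lambda}_1^{-2}\bigl(\sum_{d_0|L(n)}\tilde{\lambda}_{d_0}\bigr)^2$, \emph{before} expanding $w_n$; the whole expression then remains a non-negative $(k+1)$-dimensional quadratic form in $(d_0,\mathbf{d})$, which is diagonalized exactly via variables $y_{\mathbf{r},r_0}$ and only then bounded. You instead expand $w_n$ first and propose to insert a fundamental-lemma \emph{upper bound} for the inner count $\#\{n\in\mathcal{A}(x;q,a):(L(n),P)=1\}$ into the bilinear form $\sum_{\mathbf{d},\mathbf{e}}\lambda_{\mathbf{d}}\lambda_{\mathbf{e}}(\cdots)$. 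This is not a legitimate operation: the coefficients $\lambda_{\mathbf{d}}\lambda_{\mathbf{e}}$ change sign, so a one-sided bound on each inner sum does not yield a bound on the total. To proceed your way you would need an \emph{asymptotic} for the sifted count, uniform in $q$, with a relative error small enough to survive multiplication by $\sum_{\mathbf{d},\mathbf{e}}|\lambda_{\mathbf{d}}\lambda_{\mathbf{e}}|/[\mathbf{d},\mathbf{e}]$ (which is enormously larger than the diagonalized main term $(\log R)^kI_k(F)$). The fundamental lemma's relative error is $O(e^{-s})$ with sifting parameter $s\asymp\theta/\xi$, which is merely $O(1)$ when $\xi\asymp\theta$ — the top of the permitted range $\xi\le\theta/10$ — so the cancellation in the $\lambda$'s is destroyed and the argument fails there. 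The $(\log x)^{-100k^2}$ saving of Hypothesis~\ref{hypthss:Weak} controls only the level-of-distribution remainders, not this structural loss.

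A secondary problem: even granting an exact asymptotic, its main term carries local factors $\prod_{p|q,\,p\le x^\xi}p/(p-1)$ depending on $q=W[\mathbf{d},\mathbf{e}]$, so the identity behind \eqref{eq:S1YrYsSum} no longer diagonalizes the form ``exactly as in Proposition~\ref{prpstn:S1}''; the local sums $S_p(\mathbf{r},\mathbf{s})$ must be recomputed with the perturbed weight, and the claim that the outcome differs ``only by an Euler product over $p\mid Dra_0$'' needs proof. The paper sidesteps both issues at once: by restricting the support of $\tilde{\lambda}_{d_0}$ to $(d_0,W_0)=1$ with $W_0=DV\Delta_L$, the extra sieve variable is decoupled from $\mathbf{d}$ at every problematic prime, positivity is preserved throughout, and the factors $D/\phi(D)$ and $\Delta_L/\phi(\Delta_L)$ emerge cleanly from $W_0/\phi(W_0)$ in the final count of residue classes. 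Your heuristic for where $\Delta_L/\phi(\Delta_L)$ should come from is sound, but the mechanism you propose for extracting it does not go through as described.
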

\begin{proof}
We first split the sum into residue classes $v_0$ modulo $V=\prod_{p\le 2k^2}p$ for which $L(v_0)$ is coprime to $\prod_{p\le 2k^2, p\nmid D}p$ and each of the $L_i(v_0)$ are coprime to $W$ (the other residue classes make no contribution because of the support of $w_n$ and $\mathbf{1}_{\mathcal{S}(\xi;D)}$). We use the Selberg sieve upper bound 
\begin{equation}
\mathbf{1}_{\mathcal{S}(\xi;D)}(L(n))\le \tilde{\lambda}_1^{-2}\Bigl(\sum_{\substack{d_0|L(n)\\ d_0<x^\xi\\ (d_0,D)=1}}\tilde{\lambda}_{d_0}\Bigr)^2.
\end{equation}
(This holds for any choice of the values of $\tilde{\lambda}_d\in\mathbb{R}$ with $\tilde{\lambda}_1\ne 0$). For the residue class $v_0\pmod{V}$, this gives
\begin{equation}
\sum_{\substack{n\in \mathcal{A}(x)\\ n\equiv v_0\pmod{V}}}\mathbf{1}_{\mathcal{S}(\xi;D)}(L(n))w_n\le \frac{1}{\tilde{\lambda}_1^2}\sum_{\substack{n\in \mathcal{A}(x)\\ n\equiv v_0\pmod{V}}}\Bigl(\sum_{\substack{d_0|L(n)\\ (d_0,D)=1, d_0<x^\xi}}\tilde{\lambda}_{d_0}\Bigr)^2\Bigl(\sum_{d_i|L_i(n)}\lambda_{\mathbf{d}}\Bigr)^2.\label{eq:S3First}
\end{equation}
We restrict the support of $\tilde{\lambda}_{d_0}$ in a similar way to that of $\lambda_{\mathbf{d}}$. We force $\tilde{\lambda}_{d_0}=0$ if $p|d_0$ for any prime with $p|W_0$ where
\begin{equation}
W_0=D V\Delta_L.
\end{equation}
Similarly, we force $\tilde{\lambda}_{d_0}=0$ if $d_0>x^\xi$. Note that we allow $\tilde{\lambda}_{d_0}\ne 0$ if $(d_0,B)\ne 1$.

We return to \eqref{eq:S3First}. Expanding the squares and swapping the order of summation gives
\begin{equation}
\frac{1}{\tilde{\lambda}_1^2}\sum_{\substack{n\in \mathcal{A}(x)\\ n\equiv v_0\pmod{V}}}\Bigl(\sum_{d_0|L(n)}\tilde{\lambda}_{d_0}\Bigr)^2\Bigl(\sum_{d_i|L_i(n)}\lambda_{\mathbf{d}}\Bigr)^2=\tilde{\lambda}_1^{-2}\sum_{\substack{d_0,e_0\\ (d_0e_0,W_0)=1}}\tilde{\lambda}_{d_0}\tilde{\lambda}_{e_0}\sum_{\mathbf{d},\mathbf{e}\in\mathcal{D}_k}\lambda_{\mathbf{d}}\lambda_{\mathbf{e}}\sum_{\substack{n\in\mathcal{A}(x)\\ n\equiv v_0\pmod{V}\\ [d_i,e_i]|L_i(n)\forall 0\le i\le k}}1.\label{eq:S3Sum}
\end{equation}
We see that by our restrictions on the support of $\lambda_{\mathbf{d}},\tilde{\lambda}_{d_0}$, there is no contribution to \eqref{eq:S3Sum} unless $d_0,e_0,\mathbf{d},\mathbf{e}$ are such that $(d_ie_i,d_je_j)=1$ for all $0\le i\ne j\le k$, and $d,e<R$ and  $d_0,e_0<x^\xi$. (To avoid confusion, we recall that $d=\prod_{i=1}^k d_i$ and $e=\prod_{i=1}^k e_i$). For such values, we can combine the congruence conditions using the Chinese remainder theorem, which shows the inner sum is $\#\mathcal{A}(x;q,a)$ for some $a$ and $q=V\prod_{i=0}^k[d_i,e_i]$. We see that $q<V R^2x^{2\xi}<x^\theta$ since $\xi\le \theta/10$. We substitute $\#\mathcal{A}(x;q,a)=\#\mathcal{A}(x)/q+O(E_q^{(1)})$, and the contribution from $E_q^{(1)}$ can be seen to be negligible by an identical argument to that in the proof of Proposition \eqref{prpstn:S1}. We are therefore left to evaluate
\begin{equation}
\frac{\#\mathcal{A}(x)}{V\tilde{\lambda}_1^2}\sum_{\substack{d_0,e_0\\ (d_0,e_0,W_0)=1}}\frac{\tilde{\lambda}_{d_0}\tilde{\lambda}_{e_0}}{[d_0,e_0]}\sideset{}{'}\sum_{\substack{\mathbf{d},\mathbf{e}\in\mathcal{D}_k\\ (de,d_0e_0)=1}}\frac{\lambda_{\mathbf{d}}\lambda_{\mathbf{e}}}{[\mathbf{d},\mathbf{e}]}.\label{eq:S3MainSum}
\end{equation}
We let $\omega^*$ be the totally multiplicative function defined by 
\begin{equation}
\omega^*(p)=\begin{cases}\#\{1\le n\le p:L(n)\prod_{i=1}^kL_i(n)\equiv 0\pmod{p}\},\qquad &p\nmid B,\\ 
1,&p|B.
\end{cases}
\end{equation}
We note that with this choice, $\omega^*(p)=\omega(p)$ if $p|\Delta_L$ and $p\nmid B$, and $\omega^*(p)=\omega(p)+1$ otherwise. We also define
\begin{equation}
y_{\mathbf{r},r_0}=\mu(r_0r)\phi_{w*}(r_0r)\sum_{\substack{\mathbf{r}|\mathbf{d}\\ r_0|d_0\\ (d_0,d)=1}}\frac{\lambda_{\mathbf{d}}\tilde{\lambda}_{d_0}}{dd_0},\qquad \tilde{y}_{r_0}=\mu(r_0)\phi(r_0)\sum_{r_0|d_0}\frac{\tilde{\lambda}_{d_0}}{d_0}.
\label{eq:S3YDef}
\end{equation}
By Moebius inversion, we see that this definition of $\tilde{y}_{r_0}$ implies that
\begin{equation}
\tilde{\lambda}_{d_0}=\mu(d_0)d_0\sum_{r_0|d_0}\frac{\tilde{y}_{r_0}}{\phi(r_0)},
\end{equation}
For $(r_0,W_0)=1$ and $r_0<x^{\xi}$ we choose
\begin{equation}
\tilde{y}_{r_0}=\frac{W_0}{\phi(W_0)},\label{eq:TildeYDef}
\end{equation} 
and $\tilde{y}_{r_0}=0$ otherwise. This gives rise to a suitable choice of $\tilde{\lambda}_{d_0}$ supported on $d_0<x^\xi$ with $(d_0,W_0)=1$. Since $\xi\gg k(\log\log{x})^2/(\log{x})$, Lemma \ref{lmm:PartialSummation} shows that
\begin{align}
\tilde{\lambda}_1&=\sum_{\substack{r_0<x^\xi\\ (r_0,W_0)=1}}\frac{\tilde{y}_{r_0}\mu^2(r_0)}{\phi(r_0)}=\xi\log{x}+O(\log\log{x})\gg \xi\log{x}.
\end{align}
As in the proof of Proposition \ref{prpstn:S1} (this is exactly the same argument but for $(k+1)$-dimensional vectors instead of $k$-dimensional ones) changing variables using \eqref{eq:S3YDef} shows that
\begin{align}
\sum_{d_0,e_0}\frac{\tilde{\lambda}_{d_0}\tilde{\lambda}_{e_0}}{[d_0,e_0]}\sideset{}{'}\sum_{\substack{\mathbf{d},\mathbf{e}\in\mathcal{D}_k\\ (de,d_0e_0)=1}}\frac{\lambda_{\mathbf{d}}\lambda_{\mathbf{e}}}{[\mathbf{d},\mathbf{e}]}&=\sum_{\mathbf{r},\mathbf{s}\in\mathcal{D}_k, r_0,s_0}\frac{y_{\mathbf{r},r_0}y_{\mathbf{s},s_0}}{\phi_{\omega*}(rr_0)\phi_{\omega*}(ss_0)}\prod_{p|rr_0ss_0}S_p(\mathbf{r},\mathbf{s},r_0,s_0),\label{eq:Yrr0Sub}
\end{align}
where
\begin{align}
S_p(\mathbf{r},\mathbf{s},r_0,s_0)&=\begin{cases}
p-1,\qquad &p|(\mathbf{r},\mathbf{s})(r_0,s_0),\\
-1,& p|rr_0\text{ and }p|ss_0\text{ but }p\nmid (\mathbf{r},\mathbf{s})(r_0,s_0),\\
0,& (p|rr_0\text{ and }p\nmid ss_0)\text{ or }(p|ss_0\text{ and }p\nmid rr_0).
\end{cases}
\end{align}
Thus we may restrict to $rr_0=ss_0$. Using the bound $y_{\mathbf{r},r_0}y_{\mathbf{s},s_0}\ll y_{\mathbf{r},r_0}^2+y_{\mathbf{s},s_0}^2$, we see that (by symmetry) the right hand side of \eqref{eq:Yrr0Sub} may be bounded by
\begin{align}
\sum_{\mathbf{r},r_0}\frac{y_{\mathbf{r},r_0}^2}{\phi_{\omega*}(rr_0)^2}\sum_{\substack{\mathbf{s},s_0\\ ss_0=rr_0}}\prod_{p|rr_0}|S_p(\mathbf{r},\mathbf{s},r_0,s_0)|\le \sum_{\mathbf{r},r_0}y_{\mathbf{r},r_0}^2\prod_{p|rr_0}\frac{p+\omega^*(p)-2}{(p-\omega^*(p))^2}=\sum_{\mathbf{r},r_0}\frac{y_{\mathbf{r},r_0}^2}{\prod_{p|rr_0}(p+O(k))}.\end{align}
To evaluate this, we express $y_{\mathbf{r},r_0}$ in terms of $y_{\mathbf{r}}$ and $\tilde{y}_{r_0}$. Substituting \eqref{eq:YDef} into \eqref{eq:S3YDef}, we find that for $(r_0,rW_0)=1$ and $\mathbf{r}\in\mathcal{D}_k$
\begin{align}
y_{\mathbf{r},r_0}&=\mu(r_0r)\phi_{\omega*}(r_0r)\sum_{r_0|d_0}\mu(d_0)\sum_{d_0|f_0}\frac{\tilde{y}_{f_0}}{\phi(f_0)}\sum_{\substack{\mathbf{r}|\mathbf{d}\nonumber\\ (d,d_0)=1}}\mu(d)\sum_{\mathbf{d}|\mathbf{f}}\frac{y_{\mathbf{f}}}{\phi_\omega(f)}\\
&=\mu(r_0r)\phi_{\omega*}(r_0r)\sum_{\mathbf{r}|\mathbf{f},r_0|f_0}\frac{y_{\mathbf{f}}\tilde{y}_{f_0}}{\phi_\omega(f)\phi(f_0)}\sum_{\substack{r_0|d_0, d_0|f_0\\ \mathbf{r}|\mathbf{d},\mathbf{d}|\mathbf{f}\\ (d,d_0)=1}}\mu(d)\mu(d_0).
\end{align}
The inner sum is 0 unless every prime dividing one of $f,f_0$ but not the other is a divisor of $rr_0$. In this case the sum is $\pm 1$. Thus, using the fact that $y_{\mathbf{r}}\ge y_{\mathbf{f}}$ and $\tilde{y}_{r_0}\ge \tilde{y}_{f_0}$ (since $F$ is decreasing), we have the crude bound
\begin{equation}
y_{\mathbf{r},r_0}\le \phi_{\omega*}(r_0r)y_{\mathbf{r}}\tilde{y}_{r_0}\sum_{\substack{r_0|f_0\\ (f_0,W_0)=1}}\sum_{\substack{\mathbf{r}|\mathbf{f}\in\mathcal{D}_k\\ p\nmid (f,f_0)\Rightarrow p|rr_0}}\frac{\mu^2(f_0)}{\phi(f_0)\phi_\omega(f)}.
\end{equation}
We let $f_0=r_0f_0'g_0$ and $f_i=r_i f_i' g_i$ for $1\le i\le k$, where $f'_i=f_i/(f_i,rr_0)$ is $f_i$ with any factors of $r r_0$ removed, $g_0|r$ and $g_i|r_0$ for $1\le i\le k$. We see the constraint $p\nmid (f,f_0)\Rightarrow p|r r_0$ means that $f_0'=\prod_{i=1}^k f_i'$. Therefore we can bound the double sum above by
\begin{align}
&\frac{1}{\phi(r_0)\phi_\omega(r)}\sum_{\mathbf{f}'\in\mathcal{D}_k}\frac{1}{\phi(f')\phi_\omega(f')}\sum_{\substack{\mathbf{g}\in\mathcal{D}_k\\ g_i|r_0\forall 1\le i\le k}}\frac{1}{\phi_\omega(g)}\sum_{\substack{g_0|r \\ (g_0,W_0)=1}}\frac{1}{\phi(g_0)}\nonumber\\
&=\frac{1}{\phi(r_0)\phi_\omega(r)}\prod_{p\nmid WB}\Bigl(1+\frac{\omega(p)}{(p-1)(p-\omega(p))}\Bigr)\prod_{p|r_0}\Bigl(1+\frac{\omega(p)}{p-\omega(p)}\Bigr)\prod_{p|r, p\nmid W_0}\Bigl(1+\frac{1}{p-1}\Bigr).
\end{align}
The first product is $O(1)$ since it is over primes $p>2k^2$. Thus, simplifying the remaining products, we obtain
\begin{equation}
y_{\mathbf{r},r_0}\ll y_{\mathbf{r}}\tilde{y}_{r_0}\Bigl(\prod_{p|(r,W_0)}\frac{p-\omega^*(p)}{p-\omega(p)}\Bigr)\Bigl(\prod_{p|rr_0, p\nmid W_0}\frac{p(p-\omega^*(p))}{(p-1)(p-\omega(p))}\Bigr)\le y_{\mathbf{r}}\tilde{y}_{r_0}.
\end{equation}
Here we have used the fact that $\omega^*(p)=\omega(p)+1$ if $p\nmid W_0$.

Recalling the definitions \eqref{eq:TildeYDef} and \eqref{eq:YDef} of $\tilde{y}_{r_0}$ and $y_{\mathbf{r}}$, and applying Lemma \ref{lmm:MultipleSummation}, we find that (since $\xi\gg k(\log\log{x})^2/(\log{x})$)
\begin{align}
\sum_{\mathbf{r},r_0}&\frac{(y_{\mathbf{r},r_0})^2}{\prod_{p|r_0r}(p+O(k))}
\ll 
\Bigl(\sum_{\substack{r_0<x^\xi\\ (r_0,W_0)=1}}\frac{\tilde{y}_{r_0}^2}{\prod_{p|r_0}(p+O(k))}\Bigr)
\Bigl(\sum_{\mathbf{r}\in\mathcal{D}_k}\frac{y_{\mathbf{r}}^2}{\prod_{p|r}(p+O(k))}\Bigr)\nonumber\\
&\ll \xi(\log{R})^{k+1}\frac{W^{k}B^{k}W_0\mathfrak{S}_{WB}(\mathcal{L})^2}{\phi(WB)^{k}\phi(W_0)}\prod_{p\nmid W_0}\Bigl(1+\frac{O(k)}{p^2}\Bigr)\prod_{p\nmid W B}\Bigl(1+\frac{\omega(p)}{p+O(k)}\Bigr)\Bigl(1-\frac{1}{p}\Bigr)^k  I_k(F).\label{eq:S3MainExpression}
\end{align}
We note that the first product is $O(1)$ and the second product is $O(\mathfrak{S}_{WB}(\mathcal{L})^{-1})$,
since all primes in the products are greater than $2k^2$ and $\omega(p)\le k$. 
Thus, we obtain (recalling $\tilde{\lambda}_1\gg\xi\log{x}$)
\begin{equation}
\frac{\#\mathcal{A}(x)}{V\tilde{\lambda}_1^2}\sum_{\mathbf{r},r_0}\frac{(y_{\mathbf{r},r_0})^2}{\prod_{p|rr_0}(p+O(k))}\ll \xi^{-1} (\log{R})^{k-1}\#\mathcal{A}(x)\frac{W_0 W^k B^{k} \mathfrak{S}_{WB}(\mathcal{L})}{V\phi(W_0)\phi(W B)^{k}}I_k(F).
\end{equation}
We now sum over residue classes $v_0$ mod $V$, for which $L(v_0)$ is coprime to $\prod_{p\le 2k^2, p\nmid D}p$ and each of the $L_i(v_0)$ are coprime to $W$. The number $N$ of such residue classes is given by
\begin{equation}
N=\prod_{\substack{p|W\\ p\nmid D\Delta_L}}(p-\omega(p)-1)\prod_{\substack{p|W\\ p|D\Delta_L}}(p-\omega(p))\prod_{\substack{p|V/W\\ p\nmid Da_0}}(p-1)\prod_{\substack{p|V/W\\ p|Da_0}}p.
\end{equation}
This then gives,
\begin{align}
\sum_{n\in\mathcal{A}(x)}\mathbf{1}_{\mathcal{S}(\xi;D)}(L(n))w_n\ll \xi^{-1}\frac{B^k}{\phi(B)^k}(\log{R})^{k-1}\#\mathcal{A}(x)\mathfrak{S}_{WB}(\mathcal{L})I_k(F)\frac{N W_0 W^k}{V\phi(W_0)\phi(W)^k}.\label{eq:S4Bound2}
\end{align}
Finally, by calculation we find that
\begin{align}
\frac{N W_0 W^k}{V \phi(W_0) \phi(W)^k}&=\frac{\mathfrak{S}_B(\mathcal{L})\Delta_L D}{\mathfrak{S}_{WB}(\mathcal{L})\phi(\Delta_L D)}\prod_{\substack{p|(\Delta_L,V)\\p\nmid a_0WD}}\frac{p-1}{p}\prod_{\substack{p|W\\ p\nmid \Delta_LD}}\frac{(p-\omega(p)-1)p}{(p-\omega(p))(p-1)}\nonumber\\
&\le \frac{\mathfrak{S}_B(\mathcal{L})\Delta_L D}{\mathfrak{S}_{WB}(\mathcal{L})\phi(\Delta_L)\phi(D)}.
\end{align}
This gives the result.
\end{proof}
\begin{prpstn}\label{prpstn:S4}
Let $w_n$ be as described in Section \ref{sctn:InitialConsiderations}. For $L\in\mathcal{L}$ and $\rho\le \theta/10$, we have
\[\sum_{n\in\mathcal{A}(x)}\Bigl(\sum_{\substack{p|L(n)\\p<x^\rho\\ p\nmid B}}1\Bigr)w_n\ll \rho^2 k^4 (\log{k})^2 \mathfrak{S}_B(\mathcal{L})\#\mathcal{A}(x)(\log{R})^{k} I_{k}(F).\]
The implied constant depends only on $\theta,\alpha$ and the implied constants from \eqref{eq:Bdd1} and \eqref{eq:Bdd2}.
\end{prpstn}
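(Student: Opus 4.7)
My plan is to swap the order of summation, writing $S_4 = \sum_p U_p$ with $U_p = \sum_{n \in \mathcal{A}(x),\,p|L(n)} w_n$, and estimate each $U_p$ by adapting the method of Proposition~\ref{prpstn:S1}. First I would note that primes $p \le 2k^2$ with $p \nmid B$ divide $W$, and the support of $w_n$ forces $(L(n), W) = 1$, so such primes contribute nothing; hence the effective range is $2k^2 < p < x^\rho$ with $p \nmid B$. Writing $L = L_m$, I would expand $w_n$ as in \eqref{eq:WnDef}, combine the divisibility conditions $[d_i,e_i] \mid L_i(n)$ with $p \mid L_m(n)$ via the Chinese Remainder Theorem, and reduce the inner count to $\#\mathcal{A}(x;q,a)$ for the appropriate modulus. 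Using the support restrictions of $\mathcal{D}_k$, the inner count vanishes unless $p \nmid d_j e_j$ for $j \ne m$; the error terms arising from \eqref{eq:Bdd1} are negligible exactly as in Proposition~\ref{prpstn:S1}.

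I would next decompose the main term based on whether $p \mid [d_m, e_m]$ (when the extra condition is redundant) or $p \nmid [d_m, e_m]$ (when CRT introduces a factor $1/p$), giving
\[
U_p \approx \frac{S_1}{p} + \frac{p-1}{p} \cdot \frac{\#\mathcal{A}(x)}{V}\,\Sigma_p, \qquad \Sigma_p := \sum_{\substack{\mathbf{d},\mathbf{e}\\ p \mid [d_m,e_m]}} \frac{\lambda_\mathbf{d}\lambda_\mathbf{e}}{[\mathbf{d},\mathbf{e}]}.
\]
After changing variables to $y_\mathbf{r}$ and performing the Euler-product factorization as in Proposition~\ref{prpstn:S1}, the local factor at the prime $p$ differs from the $S_1$ case only by the restriction $p\mid[d_m,e_m]$. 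I would telescope the $S_1/p$ term against the contributions to $\Sigma_p$ in which $p$ divides exactly one of $d_m, e_m$, leaving only a two-$p$-factor remainder. The weight design $F(\mathbf{t}) \propto \prod_i \psi(t_i/U_k)/(1+T_k t_i)$, combined with Lemma~\ref{lmm:YDifference}, bounds this remainder by approximately
\[
\left(\frac{1}{1+T_k\log p/\log R}\right)^{2} T_k^{2}\, k^{2} \cdot \frac{\#\mathcal{A}(x)}{V}\,\mathfrak{S}_{WB}(\mathcal{L})(\log R)^{k} I_k(F),
\]
where the extra $k^2$ comes from the transition $I_k(F_2) \ll k^2 I_k(F)$ of Lemma~\ref{lmm:IkJk}.

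Finally, I would sum over $p$ using the Mertens-type estimate
\[
\sum_{2k^2 < p < x^\rho} \frac{(\log p)^2}{p(\log R)^2} \ll \rho^2,
\]
which combined with $T_k^2 = k^2 (\log k)^2$ yields the claimed bound of order $\rho^2 k^4(\log k)^2 \cdot \mathfrak{S}_B(\mathcal{L})\#\mathcal{A}(x)(\log R)^k I_k(F)$. The main obstacle will be the telescoping step: a naive bound $U_p \ll S_1/p$ produces only the divergent $\sum_p 1/p \asymp \log\log x$, so the cancellation between $S_1/p$ and the linear-in-$\log p$ part of $\Sigma_p$ must be tracked carefully through the Euler product at $p$. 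The delicate cross terms are those with $p \mid d_m$ but $p \nmid e_m$ (or vice versa), since they involve only a single factor of $(1+T_k\log p/\log R)^{-1}$; it is precisely their cancellation against $S_1/p$ that leaves behind the quadratic suppression $(\log p/\log R)^2$ needed to close the summation over $p$.
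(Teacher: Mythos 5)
Your plan reproduces the paper's proof essentially step for step: the paper likewise moves the sum over $p$ to the outside, reduces the inner count to $\#\mathcal{A}(x;q,a)$ with $q=[d_m,e_m,p]\prod_{i\ne m}[d_i,e_i]$, extracts the crucial quadratic suppression in $\log{p}/\log{R}$ from an exact cancellation at the prime $p$ (the identity \eqref{eq:pCancellation}, applied after writing $y_{\mathbf{r}}=y_{\mathbf{u}}+(y_{\mathbf{r}}-y_{\mathbf{u}})$ with $\mathbf{u}$ the $p$-free part of $\mathbf{r}$, combined with Lemma \ref{lmm:YDifference}), and closes with the same Mertens-type sum and the factors $T_k^2$ and $I_k(F_2)\ll k^2 I_k(F)$. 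Two small points: your displayed per-prime remainder $(1+T_k\log{p}/\log{R})^{-2}$ should read $p^{-1}(T_k\log{p}/\log{R})^{2}$ (as your subsequent Mertens estimate in fact presumes), and the paper realizes your ``telescoping'' not as a cancellation confined to the $m$-th components $d_m,e_m$ but as the vanishing of the full local sum over all admissible positions of $p$ in $\mathbf{r}$ (and separately in $\mathbf{s}$), which is what produces the doubly small factor $(y_{\mathbf{r}}-y_{\mathbf{u}})(y_{\mathbf{s}}-y_{\mathbf{v}})$.
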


\begin{proof}
We let $L(n)=L_m(n)=a_mn+b_m$ be the $m^{th}$ function in $\mathcal{L}$. As with Propositions \ref{prpstn:S1} and \ref{prpstn:S2}, we consider the sum restricted to $n\equiv v_0\pmod{W}$ for some $v_0$ with $(\prod_{i=1}^kL_i(v_0),W)=1$, since the other choices of $v_0$ make no contribution. This means we can also restrict the sum to $p\nmid W$.

Expanding the square and swapping the order of summation gives
\begin{equation}
\sum_{\substack{n\in\mathcal{A}(x)\\n\equiv v_0\pmod{W}}}\Bigl(\sum_{\substack{p|L(n)\\p<x^\rho\\ p\nmid W B}}1\Bigr)w_n=\sum_{\substack{p<x^\rho\\ p\nmid W B}}\sum_{\mathbf{d},\mathbf{e}\in\mathcal{D}_k}\lambda_\mathbf{d}\lambda_{\mathbf{e}}\sum_{\substack{n\in\mathcal{A}(x)\\ [d_i,e_i]|L_i(n)\\ n\equiv v_0\pmod{W}\\ p|L_m(n)}}1.
\end{equation}
The inner sum is empty unless $(d_ie_i,d_je_j)=1$ for all $i\ne j$ and $(d_ie_i,p)=1$ for all $i\ne m$. In this case, by the Chinese remainder theorem, we can combine the congruence conditions and see that the inner sum is $\#\mathcal{A}(x;q,a)$ for $q=[d_m,e_m,p]\prod_{i\ne m}[d_i,e_i]$ and some $a$. We write $\#\mathcal{A}(x;q,a)=\#\mathcal{A}(x)/q+O(E_q^{(1)})$ as in the proof of Propostion \ref{prpstn:S1}. We treat the error $E_q^{(1)}$ from making this change in the same manner as in Proposition \ref{prpstn:S1}, noting that all moduli $q$ we need to consider are square-free and satisfy $q<W R^2 x^\rho<x^\theta$, and for any $q$ there are $O(\tau_{3k+4}(q))$ choices of $\mathbf{d},\mathbf{e},p$ which give rise to the modulus $q$. Thus these error terms make a negligible contribution.

We use \eqref{eq:YDef} to change to our $y_{\mathbf{r}}$ variables, which gives us a main term of
\begin{equation}
\frac{\#\mathcal{A}(x)}{W}\sum_{\substack{p\le x^\rho\\ p\nmid W B}}\sideset{}{'}\sum_{\substack{\mathbf{d},\mathbf{e}\in\mathcal{D}_k\\ (d_ie_i,p)=1}}\frac{\lambda_\mathbf{d}\lambda_\mathbf{e}}{[d_m,e_m,p]\prod\limits_{i\ne m}[d_i,e_i]}=\frac{\#\mathcal{A}(x)}{W}\sum_{\substack{p<x^\rho \\ p\nmid W B}}\frac{1}{p}\sum_{\mathbf{r},\mathbf{s}\in\mathcal{D}_k}\frac{y_{\mathbf{r}}y_{\mathbf{s}}}{\phi_\omega(r)\phi_\omega(s)}\prod_{p'|rs}S_{p'}(\mathbf{r},\mathbf{s},p).\label{eq:S4MainTerm}
\end{equation}
Here if $p'\ne p$ then $S_{p'}(\mathbf{r},\mathbf{s},p)=S_{p'}(\mathbf{r},\mathbf{s})$, given by \eqref{eq:SpDef}, whereas if $p'=p$ we have
\begin{equation}
S_p(\mathbf{r},\mathbf{s},p)=\sideset{}{'}\sum_{\substack{\mathbf{d}|\mathbf{r},\mathbf{e}|\mathbf{s}\\ d_i,e_i|p\forall i\\ (d_ie_i,p)=1\forall i\ne m}}\frac{p\mu(d) \mu(e) d e}{[d_m,e_m,p] \prod_{i\ne m}[d_i,e_i] }
=
\begin{cases}
(p-1)^2,\qquad &p|(r_m,s_m),\\
-(p-1), &p|r_ms_m, p\nmid (r_m, s_m),\\
1, &p\nmid r_m s_m.
\end{cases}
\end{equation}
We let $\mathbf{u}=(r_1/(r_1,p),\dots,r_k/(r_k,p))$ be the vector formed by removing a possible factor of $p$ from the components of $\mathbf{r}$. We note that for a fixed choice $\mathbf{u},\mathbf{s}\in\mathcal{D}_k$ and $p\nmid W$ we have
\begin{equation}\label{eq:pCancellation}
\sum_{\substack{\mathbf{r}\in\mathcal{D}_k\\ r_i/(r_i,p)=u_i\forall i\\ (r_i,W_i)=1}}\frac{S_{p}(\mathbf{r},\mathbf{s},p)}{\phi_\omega(r)}=\frac{\mu((s_m,p))\phi((s_m,p))}{\phi_\omega(u)}\Bigl(1+\frac{\omega(p)-1}{p-\omega(p)}-\frac{p-1}{p-\omega(p)}\Bigr)=0.
\end{equation}
Here the first term in parentheses represents the contribution when $(r,p)=1$, the second term represents the contribution when $p|r$ but $p\nmid r_m$ (and so there are $\omega(p)-1$ choices of which index can be a multiple of $p$) and the final term represents the contribution when $p|r_m$.

We substitute $y_{\mathbf{r}}=y_{\mathbf{u}}+(y_{\mathbf{r}}-y_{\mathbf{u}})$ into our main term. By \eqref{eq:pCancellation} we find the $y_\mathbf{u}$ term makes a total contribution of 0, leaving only the contribution from $(y_{\mathbf{r}}-y_{\mathbf{u}})$. Similarly we let $\mathbf{v}$ be the vector obtained by removing a possible factor of $p$ from $\mathbf{s}$. We make the equivalent substitution $y_\mathbf{s}=y_\mathbf{v}+(y_\mathbf{s}-y_{\mathbf{v}})$, with the $y_{\mathbf{v}}$ term making no contribution. By Lemma \ref{lmm:YDifference} we have
\begin{equation}
(y_{\mathbf{r}}-y_{\mathbf{u}})(y_{\mathbf{s}}-y_{\mathbf{v}})\ll Y_{\mathbf{u}}Y_{\mathbf{v}}T_k^2(\log{p})^2/(\log{R})^2.
\end{equation}
Substituting this bound into our main term \eqref{eq:S4MainTerm}, we obtain the bound
\begin{equation}
\ll \frac{T_k^2 \#\mathcal{A}(x)}{W}\sum_{\substack{p<x^\rho \\ p\nmid W B}}\frac{1}{p}\Bigl(\frac{\log{p}}{\log{R}}\Bigr)^2\sum_{\substack{\mathbf{u},\mathbf{v}\\ (u,p)=(v,p)=1}}Y_{\mathbf{u}}Y_{\mathbf{v}}\prod_{p'|uv}|S_{p'}(\mathbf{u},\mathbf{v})|\sum_{\substack{\mathbf{r},\mathbf{s}\in\mathcal{D}_k\\ r_i/(r_i,p)=u_i\forall i\\ s_i/(s_i,p)=v_i\forall i}}\frac{|S_p(\mathbf{r},\mathbf{s},p)|}{\phi_\omega(r)\phi_\omega(s)}
\end{equation}
A calculation reveals that the inner sum is $O(\phi_\omega(u)^{-1}\phi_\omega(v)^{-1})$ for all $p\nmid W B$. This gives the bound
\begin{align}
&\ll \frac{T_k^2 \#\mathcal{A}(x)}{W}\sum_{\substack{p<x^\rho\\ p\nmid WB}}\frac{1}{p}\Bigl(\frac{\log{p}}{\log{R}}\Bigr)^2\sum_{\substack{\mathbf{u},\mathbf{v}\in\mathcal{D}_k\\ (u,p)=(v,p)=1}}\frac{Y_{\mathbf{u}}Y_{\mathbf{v}}}{\phi_\omega(u)\phi_\omega(v)}\prod_{p'|uv}|S_{p'}(\mathbf{u},\mathbf{v})|\nonumber\\
&\ll \frac{T_k^2 \rho^2\#\mathcal{A}(x)}{W}\sum_{\mathbf{u},\mathbf{v}\in\mathcal{D}_k}\frac{Y_{\mathbf{u}}^2+Y_{\mathbf{v}}^2}{\phi_\omega(u)\phi_\omega(v)}\prod_{p'|uv}|S_{p'}(\mathbf{u},\mathbf{v})|.
\end{align}
Here we have dropped the requirement that $(u,p)=(v,p)=1$ and used $Y_{\mathbf{u}}Y_{\mathbf{v}}\le Y_{\mathbf{u}}^2+Y_{\mathbf{v}}^2$ for an upper bound.

We recall from \eqref{eq:SpDef} that $S_{p'}(\mathbf{u},\mathbf{v})=0$ unless $u=v$. By multiplicativity and from the definition \eqref{eq:SpDef} of $S_{p'}(\mathbf{u},\mathbf{v})$, we find that given $\mathbf{u}\in\mathcal{D}_k$, we have
\begin{equation}
\sum_{\mathbf{v}\in\mathcal{D}_k}\frac{\prod_{p'|uv}|S_{p'}(\mathbf{u},\mathbf{v})|}{\phi_\omega(v)}=\prod_{p'|u}\Bigl(\sum_{\substack{\mathbf{w}\in\mathcal{D}_k \\ w_i|p'\,\forall i}}\frac{|S_{p'}(\mathbf{u},\mathbf{w})|}{\phi_\omega(w)}\Bigr)=\prod_{p'|u}\Bigl(\frac{p-1}{p-\omega(p)}+\frac{\omega(p)-1}{p-\omega(p)}\Bigr).
\end{equation}
(Here the first term in parentheses in the final product corresponds to the $\mathbf{w}$ such that $p|(\mathbf{u},\mathbf{w})$ and the second term to the $\omega(p)-1$ choices of $\mathbf{w}$ such that $p\nmid (\mathbf{u},\mathbf{w})$.) Thus, we find
\begin{align}
\sum_{\mathbf{u},\mathbf{v}\in\mathcal{D}_k}\frac{Y_{\mathbf{u}}^2+Y_{\mathbf{v}}^2}{\phi_\omega(u)\phi_\omega(v)}\prod_{p'|uv}|S_{p'}(\mathbf{u},\mathbf{v})|\ll \sum_{\mathbf{r}\in\mathcal{D}_k}\frac{Y_\mathbf{r}^2}{g(r)},
\end{align}
where $g$ is the multiplicative function defined by $g(p)=(p-\omega(p))^2/(p+\omega(p)-2)$. Applying Lemma \ref{lmm:MultipleSummation}, we see that this is
\begin{align}
&\ll \frac{B^k W^k \mathfrak{S}_{WB}(\mathcal{L})^2}{\phi(W B)^k}(\log{R})^k \prod_{p\nmid W B}\Bigl(1+\frac{\omega(p)}{g(p)}\Bigr)\Bigl(1-\frac{1}{p}\Bigr)^k I_k( F_2 ),
\label{eq:S4YrSum}
\end{align}
By Lemma \ref{lmm:IkJk} we have $I_k( F_2 )\ll k^2 I_k(F)$. Since any prime $p\nmid W B$ has $p>2k^2$ and $g(p)=p+O(k)$, we see the product is $\ll \mathfrak{S}_{W B}(\mathcal{L})^{-1}$. Thus \eqref{eq:S4YrSum} is
\begin{align}
&\ll k^2\frac{ B^k W^k \mathfrak{S}_{WB}(\mathcal{L})}{\phi(W B)^k}(\log{R})^k I_k(F).
\end{align}
Putting this all together gives
\begin{equation}
\sum_{\substack{n\in\mathcal{A}(x)\\n\equiv v_0\pmod{W}}}\Bigl(\sum_{\substack{p|L(n)\\p<x^\rho \\ p\nmid B}}1\Bigr)w_n\ll k^2T_k^2\rho^2\#\mathcal{A}(x) \frac{ B^k W^{k-1} \mathfrak{S}_{WB}(\mathcal{L})}{\phi(W B)^k}(\log{R})^k I_k(F).
\end{equation}
Summing over the $\phi_{\omega}(W)$ residue classes mod $W$ then gives the result.
\end{proof}
\section{Acknowledgements}
I would like to thank Sary Drappeau, Andrew Granville, Dimitris Koukoulopoulos and Jesse Thorner for many useful comments and suggestions. The author is funded by a CRM-ISM postdoctoral fellowship at the Universit\'e de Montr\'eal.
\bibliographystyle{plain}
\bibliography{Subsets}
\end{document}